\title{Regularity of sets of least perimeter in Riemannian manifolds}
\author{Aidan Backus}
\address{Department of Mathematics, Brown University}
\email{aidan\_backus@brown.edu}
\date{\today}
\newcommand{\NN}{\mathbf{N}}
\newcommand{\RR}{\mathbf{R}}
\newcommand{\Sph}{\mathbf S}
\newcommand{\Ball}{\mathbf{B}}
\DeclareMathOperator{\avg}{avg}
\DeclareMathOperator{\diam}{diam}
\DeclareMathOperator{\Exc}{Exc}
\DeclareMathOperator{\supp}{supp}
\newcommand*\dif{\mathop{}\!\mathrm{d}}
\DeclareMathOperator{\dist}{dist}
\newcommand{\Lagrange}{\mathscr L}
\newcommand{\normal}{\mathbf n}
\newcommand{\dfn}[1]{\emph{#1}\index{#1}}
\newcommand{\loc}{\mathrm{loc}}
\def\Japan#1{\left \langle #1 \right \rangle}
\newtheorem{theorem}{Theorem}[section]
\newtheorem{lemma}[theorem]{Lemma}
\newtheorem{proposition}[theorem]{Proposition}
\newtheorem{corollary}[theorem]{Corollary}
\theoremstyle{definition}
\newtheorem{definition}[theorem]{Definition}
\numberwithin{equation}{section}
\def\Xint#1{\mathchoice
{\XXint\displaystyle\textstyle{#1}}%
{\XXint\textstyle\scriptstyle{#1}}%
{\XXint\scriptstyle\scriptscriptstyle{#1}}%
{\XXint\scriptscriptstyle\scriptscriptstyle{#1}}%
\!\int}
\def\XXint#1#2#3{{\setbox0=\hbox{$#1{#2#3}{\int}$ }
\vcenter{\hbox{$#2#3$ }}\kern-.6\wd0}}
\def\dashint{\Xint-}
\begin{document}
\begin{abstract}
    We give a short proof, in the tradition of the classical work of de Giorgi and Miranda on flat space, that the reduced boundary of a set of least perimeter in a Riemannian manifold of dimension $\leq 7$ is a smooth minimal hypersurface.
\end{abstract}

\maketitle

\section{Introduction}
An open subset $U$ of a Riemannian manifold-with-boundary $M$ is said to have \dfn{least perimeter} if its indicator function $1_U$ has least gradient -- that is, for every $\varphi \in BV(M)$ with $\varphi|_{\partial M} = 0$, 
\begin{equation}\label{least perimeter dfn}
\|\dif 1_U\|_{TV} \leq \|\dif 1_U + \dif \varphi\|_{TV},
\end{equation}
where $\|\cdot\|_{TV}$ denotes the total variation norm of a Radon measure.
In this paper we shall give a new proof of the following theorem:

\begin{theorem}\label{main thm}
Let $M$ be a Riemannian manifold-with-boundary of dimension $\leq 7$, and let $U \subset M$ have least perimeter.
Then $U$ is bounded by smooth, stable minimal hypersurfaces.
\end{theorem}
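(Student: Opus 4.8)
The plan is to reduce Theorem~\ref{main thm} to the Euclidean regularity theory of de Giorgi by a localization argument, and then to run the standard blow-up and dimension-reduction scheme. First I would observe that the least perimeter condition \eqref{least perimeter dfn}, applied to compactly supported competitors $\varphi = 1_V - 1_U$, makes $U$ a \emph{local} minimizer of the Riemannian perimeter $P_g$ in the interior of $M$: whenever $U \triangle V \Subset M \setminus \partial M$ one has $P_g(U) \le P_g(V)$. Fixing $p \in \partial U$ in the interior and passing to geodesic normal coordinates on a ball $B_{r_0}(p)$, the functionals $P_g$ and the Euclidean perimeter $P$ are comparable with multiplicative error $1 + O(r^2)$, and likewise for volumes; hence $U$ is a $(\Lambda, r_0)$-minimizer of $P$ in $B_{r_0}(p)$, i.e.
\[
P(U; B_s(x)) \le P(V; B_s(x)) + \Lambda\, s^n \qquad \text{whenever } U \triangle V \Subset B_s(x),\ s < r_0,
\]
with $\Lambda = \Lambda(g, r_0)$. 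From here everything takes place in a fixed Euclidean ball.

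Next I would develop the first-order theory for such almost-minimizers: an almost-monotonicity formula showing $s \mapsto e^{C\Lambda s}\, \omega_{n-1}^{-1} s^{1-n} P(U; B_s(x))$ is non-decreasing for $x \in \partial U$ and $s < r_0$; uniform upper and lower density bounds on $\|\dif 1_U\|_{TV}$; and $L^1_{\loc}$-compactness of the rescalings $U_{x,\lambda} := \lambda^{-1}(U - x)$, with lower semicontinuity of perimeter along the limit. A subsequential limit $C$ of the $U_{x,\lambda_k}$ with $\lambda_k \to 0$ is then a genuinely Euclidean area-minimizing Caccioppoli set whose boundary is a cone: the tangent cone of $U$ at $x$. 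When $x$ lies in the reduced boundary $\partial^* U$, de Giorgi's structure theorem identifies the only possible blow-up as the half-space $\{\, y : \langle y - x, \nu_U(x)\rangle < 0 \,\}$, so the tangent cone there is a hyperplane. This is exactly the hypothesis of de Giorgi's $\varepsilon$-regularity theorem, whose proof I would reproduce in the $(\Lambda, r_0)$-minimizing setting: there is $\varepsilon_0 > 0$ such that if the cylindrical excess of $\partial U$ in $B_r(x)$ is $< \varepsilon_0$ and $\Lambda r < \varepsilon_0$, then $B_{r/2}(x) \cap \partial U$ is the graph of a $C^{1,\alpha}$ function solving the (perturbed) minimal surface equation. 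Since the excess tends to $0$ along the blow-up, this applies at all small scales near any point of $\partial^* U$, and Schauder bootstrapping on the minimal surface equation — now with smooth Riemannian coefficients — upgrades $C^{1,\alpha}$ to $C^\infty$.

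It remains to show the singular set $\Sigma := \supp \|\dif 1_U\|_{TV} \setminus \partial^* U$ is empty, which is where the dimension hypothesis enters. By Federer's dimension-reduction argument — iterating the blow-up and using that tangent cones to area-minimizing cones are again area-minimizing cones of one lower effective dimension — a nonempty $\Sigma$ would force the existence of an area-minimizing hypercone in $\RR^k$, $2 \le k \le n \le 7$, singular only at the origin. By Simons' theorem together with the Bombieri--de Giorgi--Giusti analysis of the Simons cone, no such cone exists for $k \le 7$; hence $\Sigma = \emptyset$, $\supp \|\dif 1_U\|_{TV} = \overline{\partial^* U}$ is a smooth embedded hypersurface (possibly with several components), and by local minimality it is stationary for the area functional, i.e.\ minimal. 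Stability is then immediate: a normal variation supported in $M \setminus \partial M$ corresponds to a compactly supported competitor region, so the second variation of area at $\partial U$ is non-negative. Interior points of $\partial U$ near $\partial M$ are handled by the same local argument; if $\partial U$ meets $\partial M$ one argues similarly in boundary normal coordinates.

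The main obstacle is the $\varepsilon$-regularity / excess-decay step: carrying out de Giorgi's harmonic-approximation argument so that a single scale of small, nearly flat excess improves geometrically at all finer scales, uniformly in the $O(r^2)$ Riemannian perturbation of the perimeter functional. The classification of low-dimensional area-minimizing hypercones (Simons' inequality, Bombieri--de Giorgi--Giusti) is the deepest external ingredient but may be quoted; the almost-monotonicity formula, density bounds, and compactness are routine once the local almost-minimality has been set up.
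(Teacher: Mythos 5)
Your proposal is essentially correct, but it is the route the paper explicitly mentions in its introduction and then deliberately sets aside: localize in normal coordinates, observe that $U$ is an almost-minimizer (a $(\Lambda, r_0)$-minimizer) of Euclidean perimeter, and then quote or re-run the standard machinery (almost-monotonicity, compactness of blow-ups, de Giorgi's $\varepsilon$-regularity for almost-minimizers, Federer dimension reduction, Simons' theorem). That argument works and has the advantage of generality — the almost-minimizer framework absorbs the $O(r^2)$ metric perturbation once and for all, and everything downstream is off-the-shelf. The paper instead insists on a direct, intrinsic generalization of Miranda's $BV$ argument: the excess is defined on the manifold itself via the parallel propagator $K(P,Q)$ (rather than by flattening to a Euclidean chart), the monotonicity formula is proved for functions of least gradient with an $e^{Ar^2}$ correction, and the de Giorgi lemma is established with explicit curvature error terms $C\rho^{d+1}$ by mollifying with a coordinate convolution operator and reducing to $C^1$ graphs whose area functional is a perturbation of the Euclidean one. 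What the paper's approach buys is a proof living entirely in the theory of $BV$ functions and sets of finite perimeter — no varifolds, no Nash embedding, no almost-minimizer formalism — which the author needs for companion work on least-gradient functions and minimal laminations. Two minor points on your write-up: the key step you flag as the "main obstacle" (excess decay uniform in the Riemannian perturbation) is indeed where all the work is, in either approach; and your treatment of what happens where $\partial U$ meets $\partial M$ is only gestured at, though the paper is similarly focused on the interior.
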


Here, a minimal hypersurface $N$ is \dfn{stable} if for every normal variation $(N_t)$ which has compact support on the interior of $M$, $\partial_t^2|_{t = 0} |N_t| \geq 0$.

Theorem \ref{main thm} was proven by de Giorgi and Miranda in the 1960s that if $M$ is an open subset of $\RR^d$, $d \leq 7$, and $U$ has least perimeter, then $U$ is bounded by minimal hypersurfaces \cite{deGiorgi61, Miranda66}.
The following argument suffices to prove Theorem \ref{main thm} even if $M$ is curved: taking a Nash embedding and rescaling, we may assume that $\partial U$ is an integral varifold whose mean curvature is small in $L^\infty$, so that $\partial U$ satisfies Allard's $\varepsilon$-regularity theorem \cite{DeLellis18}.
It follows that either $\partial U$ is smooth, or $\partial U$ has a singular tangent cone; the latter case implies $d \geq 8$.

However, one expects a direct generalization of Miranda's argument, and that is our goal in this present paper.
Miranda's key insight was that a $\varepsilon$-regularity theorem based on the oscillation of the normal vector, known as \dfn{de Giorgi's lemma}, can be deduced for sets of least perimeter using a monotonicity formula for functions of least gradient \cite{Miranda66}.
This argument is specific to codimension $1$ and completely avoids the geometric measure theory of varifolds (or currents) but uses the older and simpler theory of $BV$ functions.

I would like to highlight my upcoming work \cite{BackusCML,BackusInfinityMaxwell1} which studies the connection between functions of least gradient, laminations of minimal surfaces, and calibrations which solve certain generalizations of the $\infty$-Laplace equation.
This theory is specific to codimension $1$ and uses $BV$ functions and Theorem \ref{main thm} in an essential way; therefore, it is natural to want a proof of Theorem \ref{main thm} that also is based on $BV$.


\subsection{Idea of the proof}
For an exposition of Miranda's proof of Theorem \ref{main thm} in the flat case, we refer the reader to \cite[Chapters 5-9]{Giusti77} as well as Miranda's original work \cite{Miranda66}.
Consider the \dfn{excess} $\Exc_\Omega U$ -- that is, the oscillation of the conormal $1$-form $\normal_U$ to the reduced boundary $\partial^* U$ of $U$, measured in an open set $\Omega$.
By definition,
\begin{equation}\label{intro excess}
\Exc_\Omega U := |\partial^* U \cap \Omega| - \left|\int_\Omega \normal_U \dif \mu\right|
\end{equation}
where $\mu$ is the surface measure on $\partial^* U$ and the integral is a vector-valued integral.
By a mollification argument using a monotonicity formula for functions of least gradient, it suffices to estimate this quantity when $\partial^* U$ is the graph of a $C^1$ solution $u$ of the minimal surface equation\footnote{Strictly speaking, $u$ need only be an approximate solution of the minimal surface equation, since the relevant mollification operator does not commute with the minimal surface operator.}
\begin{equation}\label{euclidean MSE}
\nabla \cdot \frac{\nabla u}{\sqrt{1 + |\nabla u|^2}} = 0,
\end{equation}
such that $\nabla u$ is small in $C^0$. 
Linearizing (\ref{euclidean MSE}) around the trivial solution, we obtain the Laplace equation; from this, we deduce that we may approximate $u$ by a sum of orthogonal harmonic polynomials, which then implies the estimate 
\begin{equation}\label{intro DGL}
\Exc_{B(P, r/2)} U \leq 2^{-d} \Exc_{B(P, r)} U.
\end{equation}
The estimate (\ref{intro DGL}) is called the \dfn{de Giorgi lemma}, and by the aforementioned mollification argument, holds even if $\partial^* U$ is not assumed $C^1$.
It is easy to see that the de Giorgi lemma implies that $\partial^* U$ is actually $C^1$; the theorem then follows from (\ref{least perimeter dfn}).

While the above scheme roughly is also the plan of this paper, it does not quite work as stated above.
There are a handful of technicalities caused by the presence of curvature; here we just address the most important one.

The definition (\ref{intro excess}) assumes that $\normal_U$ is a map into a fixed vector space, and makes no sense if $\normal_U$ must be understood as a section of a vector \emph{bundle}; in other words, it makes no sense if $M$ has curvature.
One can rectify this issue by introducing the parallel propagator
$$K(P, Q): T_Q'M \to T_P'M,$$
which acts by parallel transport along a geodesic $\gamma$ from $Q$ to $P$ whenever $Q$ is contained in the cut locus of $P$.
Then, if $\Omega \ni P$ is contained in the cut locus of $P$, it is natural to generalize (\ref{intro excess}) to
\begin{equation}\label{intro excess 2}
\Exc_\Omega(U, P) := |\partial^* U \cap \Omega| - \left|\int_\Omega K(P, Q) \normal_U(Q) \dif \mu(Q)\right|.
\end{equation}
In order for (\ref{intro excess 2}) to be useful, however, it should be approximately independent of the basepoint $P$: mollification may move $\partial^* U$, so even if we choose a basepoint on $\partial^* U$, we may have to move it later if we wish to impose that $P \in \partial^* U$.

In order to prove that (\ref{intro excess 2}) is approximately translation-invariant, we must estimate $K(P, Q)$.
This is accomplished using the formula \cite[Chapter II, \S2]{baez1994gauge}
$$K(P, Q) = \mathcal Pe^{-\int_Q^P \Gamma}$$
where $\mathcal P$ is the path-ordering symbol, and $\Gamma$ is the Christoffel symbol of $M$, viewed as a matrix-valued $1$-form.
The Christoffel symbol $\Gamma$ is suitably small when measured in normal coordinates based at a point of $\Omega$.
This last point is cruical: the proof of the de Giorgi lemma does not allow us to use an arbitrary coordinate system on $M$, but only coordinate systems in which the linearization of the minimal surface equation around the trivial solution gives the Laplace equation.
However, we can choose normal coordinates in which this happens, and hence conclude the de Giorgi lemma.



\subsection{Acknowledgments}
I would like to thank Georgios Daskalopoulos for suggesting this project and for many helpful discussions, and Christine Breiner for helpful comments.
This research was supported by the National Science Foundation's Graduate Research Fellowship Program under Grant No. DGE-2040433.

\section{Preliminaries}\label{Prelims}
\subsection{Notation and conventions}
We denote the Riemannian metric by $g$.
When using the Einstein convention, Greek indices range over $0, 1, \dots$ while Latin indices range over $1, \dots$.
We write $y := x^0$.
We write $\Japan \xi := \sqrt{1 + |\xi|^2}$ for the Japanese norm of a vector $\xi$.
We write $\Ball^d$ for the unit ball in euclidean space, and $\Sph^{d - 1}$ for the unit sphere.

\subsection{Sets of locally finite perimeter}
We recall some elementary facts from geometric measure theory.
Standard references on euclidean space include \cite{simon1983GMT,Giusti77}, and we will only give details when the Riemannian case is significantly more involved than its euclidean counterpart.

Given a function $u \in L^1_\loc(M)$, we can define its \dfn{total variation measure} in an open set $U$ by
$$\|\dif u\|_{TV(U)} := \sup_{\|\psi\|_{C^0} \leq 1} \int_M u \dif \psi$$
where $\psi$ ranges over all $d-1$-forms with compact support in $U$.
We sometimes abuse notation and write $\star |\dif u|$ for the total variation measure, even if $|\dif u|$ is a distribution but not a function.
Similarly we sometimes write $\star \partial_i u$, if we have chosen a coordinate system $(\partial_i)$.

If the total variation measure is a Radon measure, we say that $u \in BV_\loc(M)$, or that $u$ has \dfn{locally bounded variation}.
This is a diffeomorphism-invariant condition.
If $u$ has locally bounded variation and $U \subseteq M$ is open with Lipschitz boundary, then by \cite[Theorem 2.10]{Miranda67}, the trace $u|_{\partial U} \in L^1_\loc(\partial U)$ is well-defined.
Moreover, by \cite[Theorem 4.14]{simon1983GMT}, there exists a $\star|\dif u|$-measurable section $f$, the \dfn{polar part} of $\dif u$, of the cosphere bundle $S'M$ such that $\partial_i u = f_i |\dif u|$.
As in \cite{Miranda66, Giusti77}, most of the technical work in this paper amounts to controlling the oscillation of a polar section $f$ at fine scales.
In order to make this precise, we shall need to take averages of $f$ and apply a version of the Lebesgue differentiation theorem for curved vector bundles.

\begin{proposition}[Lebesgue differentiation theorem for a vector bundle]\label{LebesgueDiff}
Let $E \to M$ be a vector bundle over an oriented smooth manifold $M$, $\omega$ a Radon measure, and $f \in L^1_\loc(M, E, \omega)$.
Then there exists an $\omega$-null set $Z \subset M$ such that for every Riemannian metric on $M$, every trivialization $(F_1, \dots, F_\ell)$ of $E$ with dual trivialization $(F'_1, \dots, F'_\ell)$ of $E'$, and every $P \in M \setminus Z$,
$$f(P) = \lim_{r \to 0} \sum_{i=1}^\ell \left[\frac{\int_{B(P, r)} (F'_i, f) \dif \omega}{\omega(B(P, r))}\right] F_i(P).$$
\end{proposition}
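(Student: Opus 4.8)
The plan is to reduce the statement to the classical Lebesgue differentiation theorem on $\RR^d$ (or on the metric measure space $(M,\omega)$ with respect to a single background metric) applied component-by-component. The key point is that the null set $Z$ can be chosen \emph{independently} of the metric and the trivialization, even though the balls $B(P,r)$ depend on the metric; this works because shrinking balls in one metric are comparable to shrinking balls in any other, and because a countable family of trivializations suffices to detect the limit.

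First I would fix an auxiliary reference metric $g_0$ on $M$ and a countable atlas of coordinate charts, and in each chart fix a countable dense (in the $C^0_\loc$ topology) family of local trivializations of $E$; call the resulting countable collection of pairs (chart, trivialization) $\mathcal T$. For each $(\varphi, (F_1,\dots,F_\ell)) \in \mathcal T$ and each $i$, the scalar function $P \mapsto (F'_i, f)(P)$ lies in $L^1_\loc$ of the coordinate domain with respect to $\omega$; by the Besicovitch–Lebesgue differentiation theorem on $\RR^d$ (valid for any Radon measure, using Besicovitch covering), there is an $\omega$-null set off which its $g_0$-ball averages converge to its value. Let $Z$ be the countable union of all these null sets, together with the $\omega$-null set where $f$ fails to be approximately continuous in the bundle sense. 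Then $Z$ is $\omega$-null and depends only on $f$, $g_0$, and the fixed countable data.

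Next, given \emph{any} Riemannian metric $g$, any trivialization $(F_1,\dots,F_\ell)$ near a point $P \notin Z$, and any $P$, I would argue the limit formula holds. Since $g$ and $g_0$ are comparable on a neighborhood of $P$, the family $\{B_g(P,r)\}_{r>0}$ is a "fine'' family: for small $r$, $B_g(P,r) \subset B_{g_0}(P, Cr)$ and $\omega(B_{g_0}(P,Cr)) \leq C' \omega(B_{g_0}(P,r))$ provided $\omega$ is, say, doubling along $g_0$-balls at $P$ — but in fact one does not even need doubling: the right statement is that $P$ is a point of $\omega$-density one and approximate continuity with respect to the differentiation basis of \emph{all} sufficiently round sets, which follows from the Besicovitch theory since $g$-balls are uniformly "regular'' with respect to $g_0$-balls near $P$. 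Concretely, $(F'_i, f)$ being approximately continuous at $P$ in the $g_0$ sense forces $\dashint_{B_g(P,r)} |(F'_i,f) - (F'_i,f)(P)| \dif\omega \to 0$, because $B_g(P,r)$ is trapped between two comparable $g_0$-balls whose averaging errors both tend to $0$. Finally, for the given trivialization $(F_i)$, write it in terms of one of the countable trivializations $(\tilde F_j)$ from $\mathcal T$ via a smooth $\GL_\ell$-valued transition function $A$; since $A$ is continuous at $P$ and the averages of the $(\tilde F'_j, f)$ converge, linearity and the continuity of $A$ give convergence of $\sum_i [\dashint_{B_g(P,r)}(F'_i,f)\dif\omega] F_i(P)$ to $\sum_i (F'_i,f)(P) F_i(P) = f(P)$.

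The main obstacle is the interchange of "for all metrics/trivializations'' with "there exists a null set'': a naive proof picks a null set per metric and the uncountable union is not null. The fix, as above, is that the \emph{value} $f(P)$ and the approximate-continuity property of the scalars $(F'_i,f)$ are intrinsic (independent of which comparable metric's balls one averages over), so a single countable family of test trivializations and a single reference metric pin down $Z$, and every other metric/trivialization is handled by a continuity/comparability argument at the fixed point $P$. A secondary technical point is justifying Lebesgue differentiation for a general Radon measure $\omega$ on $\RR^d$ (not just Lebesgue measure), which is exactly the content of the Besicovitch differentiation theorem and requires only the Besicovitch covering lemma in $\RR^d$; I would cite \cite{simon1983GMT} for this.
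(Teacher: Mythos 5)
Your proposal is correct and follows essentially the same route as the paper: reduce to the scalar Lebesgue (Besicovitch) differentiation theorem with respect to a fixed reference metric and trivialization, then upgrade to arbitrary metrics via bounded eccentricity of balls and to arbitrary trivializations via continuity of the transition functions. If anything you are slightly more explicit than the paper about the last step (passing from a countable family of test trivializations to an arbitrary one using the continuity of the $\GL_\ell$-valued change-of-frame at $P$), whereas the paper handles this by arguing that its containing bad set is independent of the choice of frame; the underlying idea is the same.
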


We shall apply this proposition with $E := T'M$, $F_i = \dif x^i$.
Note carefully that the terms inside the limit \emph{are} dependent on the metric and the choice of trivialization; the content of this result is that \emph{the set of Lebesgue points is diffeomorphism-invariant}.

\begin{proof}
Choose a flat Riemannian metric, let $\mathcal F = ((F_i), (F_i'))$ be a pair of paralellizations of $E, E'$ such that $(F_i', F_j) = \delta_{ij}$, and $\ell$ the rank of $E$.
Then for every $\delta > 0$ there exists $\tilde f \in C_c(M, E)$ such that $\|f - \tilde f\|_{L^1(\omega)} < \delta$, thus
\begin{align*}
&\left|\sum_{i=1}^\ell \left[(F_i'(x), f(x)) - \dashint_{B(x, r)} (F_i', f) \dif \omega\right] F_i(x)\right| \\
&\qquad \leq \left|\sum_{i=1}^\ell (F_i'(x), f(x) - \tilde f(x)) F_i(x)\right| + \dashint_{B(x, r)} \left|\sum_{i=1}^\ell (F_i', f - \tilde f)F_i(x) \dif \omega \right| \\
&\qquad \qquad + \left|\sum_{i=1}^\ell \left[(F_i'(x), \tilde f(x)) - \dashint_{B(x, r)} (F_i, \tilde f) \dif \omega\right] F_i(x)\right| \\
&\qquad =: I_1(x) + I_{2, r}(x) + I_{3, r}(x).
\end{align*}
Here the integral defining $I_{2, r}(x)$ is valued in the fiber $E_x$ and $I_{3, r} \to 0$, $\omega$-almost everywhere as $x \to 0$.

By the proof of the Lebesgue differentiation theorem \cite[Chapter 3, Theorem 1.3]{stein2009real},
\begin{align*}
&\left\{x \in M: \limsup_{r \to 0} \left|\sum_{i=1}^\ell \left[(F_i'(x), f(x)) - \dashint_{B(x, r)} (F_i', f) \dif \omega\right] F_i(x)\right| > 2\varepsilon\right\} \\
&\qquad \subseteq \{I_1 > \varepsilon\} \cup \bigcap_{r > 0} \bigcup_{s <r} \{I_{2, s} > \varepsilon\} \\
&\qquad \subseteq \{f - \tilde f| > \varepsilon\} \cup \bigcap_{r > 0} \bigcup_{s < r} \left\{\dashint_{B(x, r)} |f - \tilde f| \dif \omega > \varepsilon\right\}
\end{align*}
The right-hand side is independent of $\mathcal F$ and $\delta$, but has $\omega$-measure $\lesssim \delta/\varepsilon$.
We choose $\delta \ll \varepsilon$ to conclude that it is $\omega$-null, and remains as such if we take a union over all possible $\mathcal F$ and $\varepsilon$.
Moreover, if $h$ is any Riemannian metric, then its balls have bounded eccentricity with respect to $g$, so we may replace $g$-balls with $h$-balls in the above set inclusion \cite[Chapter 3, Corollary 1.7]{stein2009real}.
\end{proof}

\begin{corollary}
The polar section $f: M \to S'M$ of a $BV$ function $u$ satisfies
\begin{equation}\label{Lebesgue point definition}
    f_\mu(P) = \left[\lim_{r \to 0} \frac{\int_{B(P, r)} \partial_\mu u \dif V}{\|\dif u\|_{TV(B(P, r))}}\right]
\end{equation}
for any coordinate system $(x^\mu)$ and any Riemannian metric $g$, and $\star|\dif u|$-almost every $P$.
The exceptional set does not depend on $(x^\mu)$ or $g$.
\end{corollary}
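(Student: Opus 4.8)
The plan is to obtain the corollary as a formal consequence of Proposition \ref{LebesgueDiff} applied to $E = T'M$ with the coordinate coframe as trivialization; the only point requiring care is the dependence of the total variation measure on the metric in the denominator.

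First I would fix a reference metric $g_0$ on $M$ and set $\omega := \|\dif u\|_{TV(\cdot)}$ computed with respect to $g_0$, which is a Radon measure since $u \in BV_\loc(M)$; its polar section $\sigma := \dif u/\omega$ is a section of $T'M$ with $|\sigma|_{g_0} = 1$ $\omega$-almost everywhere, hence lies in $L^\infty_\loc \subseteq L^1_\loc(M, T'M, \omega)$. Apply Proposition \ref{LebesgueDiff} to $E = T'M$, the measure $\omega$, and $f = \sigma$, obtaining an $\omega$-null set $Z$. For a coordinate system $(x^\mu)$, take the trivialization $F_\mu = \dif x^\mu$ of $T'M$, with dual trivialization $F'_\mu = \partial_\mu$ of $TM$; then $(F'_\mu, \sigma) = \sigma_\mu$, and the relation $\partial_\mu u = \sigma_\mu\,\omega$ defining the polar section gives $\int_{B(P,r)} (F'_\mu, \sigma)\dif\omega = \int_{B(P,r)} \partial_\mu u$. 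Thus for every $P \in M \setminus Z$, every metric $h$ (which enters the proposition only through the balls $B(P,r)$), and every chart,
\[
\sigma_\mu(P) = \lim_{r\to 0}\frac{\int_{B(P,r)}\partial_\mu u}{\omega(B(P,r))},
\]
which is (\ref{Lebesgue point definition}) for $g_0$, and already with the asserted independence of the chart.

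To pass to an arbitrary metric $g$, note that on compact sets $g$ and $g_0$ are comparable, so $\mu_g := \|\dif u\|_{TV,g}$ and $\omega$ are locally mutually absolutely continuous with $\dif\mu_g/\dif\omega$ equal $\omega$-almost everywhere to the $g$-length of $\sigma$, and the $g$-polar section has components $\sigma_\mu$ divided by that length. I would enlarge $Z$ by the $\omega$-null set where $|\sigma|_{g_0} \neq 1$ and by the complement of the strong Lebesgue points of $\sigma$ relative to $(\omega, g_0)$; that the exceptional set of Proposition \ref{LebesgueDiff} may be taken to consist only of non-Lebesgue points is visible from the Vitali-covering argument in its proof, which already controls $\dashint_{B(x,r)} |\sigma - \tilde\sigma|\dif\omega$ for continuous approximants $\tilde\sigma$. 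For $P$ outside the enlarged $Z$, the bounded $g_0$-eccentricity of $g$-balls \cite[Chapter 3, Corollary 1.7]{stein2009real}, the local comparability of the two metrics, and the smoothness of $g$ give $\omega(B(P,r))^{-1}\mu_g(B(P,r)) \to |\sigma(P)|_g > 0$; dividing the displayed numerator limit by this denominator limit yields exactly the right-hand side of (\ref{Lebesgue point definition}) for $g$, with $Z$ depending only on $u$. I expect the only real obstacle to be this last bookkeeping — ensuring that the value and positivity of $\dif\mu_g/\dif\omega$ can be read off the same null set for every $g$ at once — which is precisely why it is worth isolating the stronger form of Proposition \ref{LebesgueDiff} from its proof; everything else is a direct rewriting.
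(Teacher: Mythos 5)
Your proof is correct and follows the route the paper intends: apply Proposition~\ref{LebesgueDiff} to $E = T'M$ with the coordinate coframe $F_\mu = \dif x^\mu$. You have also correctly identified a real subtlety that a literal citation of the proposition does not cover: in Proposition~\ref{LebesgueDiff} the Radon measure $\omega$ is held fixed and only the balls change with the metric, whereas in the corollary both the balls \emph{and} the normalizing measure $\star|\dif u|_g$ in the denominator vary with $g$. Your fix --- passing through a reference metric $g_0$, writing $\dif\mu_g/\dif\omega = |\sigma|_g$, and observing from the Vitali argument inside the proof of Proposition~\ref{LebesgueDiff} that the exceptional set may be taken to lie in the complement of the \emph{strong} Lebesgue points of $\sigma$ (so that $\dashint_{B(P,r)}|\sigma|_g\,\dif\omega \to |\sigma(P)|_g > 0$ for every smooth $g$ simultaneously, by Lipschitz continuity of $|\cdot|_g$ and the bounded-eccentricity comparison of balls) --- is exactly the bookkeeping needed to make the ``any $g$'' clause and the $g$-independence of the exceptional set rigorous, and it cannot be obtained from the statement of Proposition~\ref{LebesgueDiff} alone, because the weak Lebesgue condition stated there does not control nonlinear functionals of $\sigma$ such as $|\sigma|_g$. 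This is a small but genuine strengthening of what the paper writes down.
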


It follows from the above corollary that the following definitions, largely taken from \cite[Definition 3.3]{Giusti77}, which a priori refer to the metric or to a choice of coordinate system, are actually completely determined by the smooth structure on $M$.

\begin{definition}
Let $U \subseteq M$. We say that $U$ has \dfn{locally finite perimeter} if $1_U \in BV_\loc(M)$.
In that case we make the following definitions:
\begin{enumerate}
\item The \dfn{measure-theoretic boundary} $\partial U$ is the set of points whose Lebesgue density with respect to $M$ is $\in (0, 1)$.
\item The polar section of $1_U$ is called the \dfn{conormal $1$-form} $\normal_U$ to $\partial U$.
\item The set of points $P$ for which $\normal_U(P)$ satisfies (\ref{Lebesgue point definition}) is the \dfn{reduced boundary} $\partial^* U$.
\item The \dfn{perimeter} $|\partial^* U \cap E|$ in a Borel set $E$ is $\|d1_U\|_{TV(E)}$.
\end{enumerate}
\end{definition}

Choosing a coordinate system on $M$ in which the volume form is $\dif x^0 \wedge \cdots \wedge \dif x^{d - 1}$ (so we may assume that $M$ is a domain in $\RR^d$), we see from \cite[Chapter 4]{Giusti77} that the following properties of the reduced boundary hold:

\begin{proposition}\label{locality of Caccioppoli}
    Let $U$ be a set of locally finite perimeter.
    Then:
    \begin{enumerate}
    \item $\partial^* U$ is a dense subset of $\partial U$.
    \item If $\normal_U$ extends to a continuous $1$-form on $\partial U$, then $\partial^* U = \partial U$ is a $C^1$ embedded hypersurface.
    \item If $\partial^* U = \partial U$ is a $C^1$ hypersurface, then $\normal_U$ is the conormal $1$-form on $\partial U$ as defined in differential topology, and the total variation measure of $1_U$ is the surface measure on $\partial U$.
\end{enumerate}
\end{proposition}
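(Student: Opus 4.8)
The plan is to reduce all three assertions to the Euclidean theory of sets of finite perimeter in \cite[Chapter 4]{Giusti77}. The feature that makes the reduction legitimate is the corollary to Proposition~\ref{LebesgueDiff}: the sets $\partial U$, $\partial^* U$ and the polar section $\normal_U$ are \emph{intrinsic to the smooth structure} of $M$, and in particular do not depend on the metric entering the definition of $\|\dif u\|_{TV}$.

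First I would fix an arbitrary point of $\partial U$ and pass to local coordinates $(x^\mu)$ adapted to the volume form, so that $\dif V = \dif x^0 \wedge \cdots \wedge \dif x^{d-1}$; such coordinates exist by the elementary construction $y^0 := \int_0^{x^0} \rho(t, x^1, \dots, x^{d-1}) \dif t$ and $y^i := x^i$ for $i \geq 1$, where $\rho > 0$ is the density of $\dif V$ in any initial chart. Identifying the coordinate neighborhood with a domain $\Omega \subseteq \RR^d$, the Riemannian volume is now Lebesgue measure, so $\partial U$ is literally the Euclidean measure-theoretic boundary. Since $g$ is continuous it is locally comparable to the flat metric, so its geodesic balls have locally bounded eccentricity relative to Euclidean balls; hence $\|\dif 1_U\|_{TV}$ and the Euclidean total variation $|D 1_U|$ are mutually absolutely continuous with locally bounded densities, $1_U \in BV_\loc(M)$ if and only if $1_U \in BV_\loc(\Omega)$ in the Euclidean sense, and the vector measure $D 1_U = (\partial_0 1_U, \dots, \partial_{d-1} 1_U)$ is the same object in both pictures. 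Feeding this comparability into \cite[Chapter 3, Corollary 1.7]{stein2009real} exactly as in the proof of Proposition~\ref{LebesgueDiff}, one gets that $\partial^* U$ coincides with the reduced boundary in the sense of de Giorgi of \cite[Chapter 4]{Giusti77}, with $\normal_U$ the associated generalized conormal.

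With this dictionary the three conclusions become classical. For (1): $\partial^* U \subseteq \partial U$ by the Euclidean structure theorem, and if some ball $B$ met $\partial U \setminus \overline{\partial^* U}$, then $|D 1_U|(B') = \mathcal H^{d-1}(\partial^* U \cap B') = 0$ for a smaller ball $B'$, forcing $1_U$ to be a.e.\ constant on $B'$ and hence its center to have density $0$ or $1$, a contradiction; so $\partial^* U$ is dense in $\partial U$. For (2): if $\normal_U$ extends to a continuous $1$-form on $\partial U$ we are in the situation treated in \cite[Chapter 4]{Giusti77}, where a reduced boundary carrying a continuous generalized conormal is shown to be locally a Lipschitz graph whose conormal is the graphing conormal; continuity of the latter upgrades the graphing function to $C^1$, and in particular $\partial^* U = \partial U$. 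For (3): when $N := \partial^* U = \partial U$ is a $C^1$ hypersurface the statement is local, and near a point of $N$, after permuting and reflecting the coordinates, $U$ is the subgraph of a $C^1$ function; integrating by parts against this $C^1$ graph with respect to the fixed metric $g$ identifies $\dif 1_U$ with the unit $g$-conormal of differential topology times the restriction of $\mathcal H^{d-1}$ to $N$. Thus $\normal_U$ is that conormal and $\|\dif 1_U\|_{TV}$ is the Riemannian surface measure of $N$.

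The step I expect to require the most care is the compatibility used in the second paragraph: that replacing $|D 1_U|$ by $\|\dif 1_U\|_{TV}$ in the denominators of (\ref{Lebesgue point definition}) changes neither the set of points at which the limit exists nor the direction of the limiting vector. The first half is precisely the metric-independence of the exceptional set in the corollary to Proposition~\ref{LebesgueDiff}; the second holds because $\normal_U$ is valued in the unit cosphere bundle, so that the normalizations ``the limit has $g$-norm $1$'' and ``the limit has Euclidean norm $1$'' cut out the same subset of the common set of Lebesgue points. Everything else is bookkeeping together with the citations above.
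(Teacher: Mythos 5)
Your proposal is correct and follows essentially the same route as the paper, which likewise passes to local coordinates in which the Riemannian volume form becomes $\dif x^0 \wedge \cdots \wedge \dif x^{d-1}$ and then cites the Euclidean theory of \cite[Chapter 4]{Giusti77}; you simply spell out the reduction (comparability of the total variation measures, metric-independence of the Lebesgue point set from Proposition~\ref{LebesgueDiff}) that the paper leaves implicit.
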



\begin{proposition}[coarea formula]\label{Coarea2}
Let $u \in BV_\loc(M)$ and $E$ an open set. Then
\begin{equation}\label{coarea formula}
\|\dif u\|_{TV(E)} = \int_{-\infty}^\infty |E \cap \partial^* \{u > y\}| \dif y.
\end{equation}
\end{proposition}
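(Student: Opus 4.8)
The plan is to transplant to the Riemannian setting the classical proof of the coarea formula for $BV$ functions; compare \cite[Chapter 1]{Giusti77}. The adaptation is essentially mechanical, the only genuinely Riemannian ingredient being the smooth coarea formula for smooth maps $M \to \RR$, which is classical. Write $E_y := \{u > y\}$ and, for Borel $A$, $\lambda(A) := \int_{-\infty}^\infty |A \cap \partial^* E_y| \, \dif y$. For each fixed $y$ the set function $A \mapsto |A \cap \partial^* E_y| = \|\dif 1_{E_y}\|_{TV(A)}$ is (the open‑set restriction of) a $[0,\infty]$‑valued Borel measure, equal to $+\infty$ on every open set when $E_y$ lacks locally finite perimeter, and $y \mapsto \|\dif 1_{E_y}\|_{TV(E)}$ is Lebesgue measurable, being the supremum of the countably many measurable functions $y \mapsto \int_{E_y} \dif\psi$ as $\psi$ ranges over a $C^0$‑dense countable subset of the admissible test forms; monotone convergence then shows $\lambda$ is continuous from below along increasing sequences of open sets. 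It therefore suffices to prove $\|\dif u\|_{TV(E)} = \lambda(E)$ for relatively compact open $E$ and then exhaust.

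The inequality $\|\dif u\|_{TV(E)} \le \lambda(E)$ is the layer‑cake estimate: from $u = \int_0^\infty 1_{E_y}\,\dif y - \int_{-\infty}^0 1_{\{u \le y\}}\,\dif y$, the vanishing $\int_M \dif\psi = 0$ for compactly supported $\psi$, and Fubini's theorem, one gets $\int_M u\,\dif\psi = \int_{-\infty}^\infty \big(\int_{E_y}\dif\psi\big)\,\dif y \le \int_{-\infty}^\infty \|\dif 1_{E_y}\|_{TV(E)}\,\dif y = \lambda(E)$ for every $(d-1)$‑form $\psi$ with $\supp\psi \Subset E$ and $\|\psi\|_{C^0}\le 1$; taking the supremum over $\psi$ gives the bound, and in particular $\lambda$ is locally finite, hence Radon.

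For the reverse inequality, I would first treat $u \in C^\infty(M)$: then $\|\dif u\|_{TV(E)} = \int_E |\dif u|_g\,\dif V$, and by Sard's theorem a.e.\ $y$ is a regular value, so $\partial^* E_y = \partial E_y = u^{-1}(y)$ is a smooth hypersurface whose area over $E$ equals $|E \cap \partial^* E_y|$ by Proposition \ref{locality of Caccioppoli}; thus (\ref{coarea formula}) reduces to the classical coarea formula for $u\colon M \to \RR$. For general $u \in BV_\loc(M)$, mollify in coordinate charts and patch to obtain $u_k \in C^\infty(M)$ with $u_k \to u$ in $L^1(E)$ and $\limsup_k \|\dif u_k\|_{TV(E)} \le \|\dif u\|_{TV(\overline E)}$; along a subsequence $1_{\{u_k > y\}} \to 1_{E_y}$ in $L^1(E)$ for a.e.\ $y$, by Fubini applied to $\int_{-\infty}^\infty \|1_{\{u_k>y\}} - 1_{E_y}\|_{L^1(E)}\,\dif y = \|u_k - u\|_{L^1(E)}$. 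Since $v \mapsto \|\dif v\|_{TV(E)}$ is a supremum of functionals continuous along $L^1(E)$‑convergent sequences, it is lower semicontinuous, so $|E \cap \partial^* E_y| \le \liminf_k |E \cap \partial^*\{u_k > y\}|$ for a.e.\ $y$; Fatou's lemma and the smooth case then give $\lambda(E) \le \liminf_k \|\dif u_k\|_{TV(E)} \le \|\dif u\|_{TV(\overline E)}$. Exhausting an arbitrary open set $A$ by relatively compact open $E_j$ with $\overline{E_j} \subset A$ upgrades this to $\lambda(A) \le \|\dif u\|_{TV(A)}$, which together with the layer‑cake bound yields the equality of Borel measures $\lambda = \|\dif u\|_{TV}$, i.e.\ (\ref{coarea formula}).

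The main step — indeed the only genuinely Riemannian one — is the smooth case: one needs the coarea formula $\int_E |\dif u|_g\,\dif V = \int_{-\infty}^\infty (\text{area of } u^{-1}(y) \cap E)\,\dif y$ for $u \in C^\infty(M)$, which is the coarea formula for Lipschitz maps between Riemannian manifolds and reduces to the Euclidean statement in a chart, and the existence of the chart‑by‑chart mollifications $u_k$ with the stated control of total variation, the metric distortion introduced by mollifying in coordinates contributing only $1+o(1)$ factors. Neither requires a new idea, so the argument is a faithful transcription of the flat proof.
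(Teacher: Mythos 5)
Your proposal is correct and follows essentially the same route as the paper: reduce to smooth $u$ and then invoke Sard's theorem together with Proposition~\ref{locality of Caccioppoli} to identify $|E\cap\partial^*E_y|$ with the Riemannian area of the level set, at which point the statement is the classical smooth coarea formula. The paper compresses the entire reduction step (layer-cake inequality, mollification in charts, lower semicontinuity, Fatou, exhaustion) into the single sentence ``reasoning identically to \cite[Theorem 1.23]{Giusti77}, we may assume $u\in C^\infty(M)$,'' whereas you spell it out explicitly; the content is the same.
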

\begin{proof}
Reasoning identically to \cite[Theorem 1.23]{Giusti77}, we may assume that $u \in C^\infty(M)$.
If this is true and also $u$ has no critical points, then (\ref{coarea formula}) follows from Fubini's theorem, the fact that $|E \cap \partial \{u > y\}|$ is the surface area of $E \cap \{u = y\}$ (by Proposition \ref{locality of Caccioppoli}), and the change-of-variables formula.
However the left-hand side of (\ref{coarea formula}) is unaffected by critical points of $u$, and the right-hand side of (\ref{coarea formula}) is unaffected by critical values of $u$ by Sard's theorem, so (\ref{coarea formula}) holds even if $u \in C^\infty(M)$ has critical points.
\end{proof}

We write
$$\eta(u, U) := \inf_{v|_{\partial U} = 0} \|\dif(u + v)\|_{TV(U)}$$
for $u \in BV_\loc(M)$ and $U \subseteq M$ open with Lipschitz boundary, thus $u$ has least gradient iff $\eta(u, M) = \|\dif u\|_{TV(M)}$.
If $u, v \in BV(U)$, then using the coarea formula and reasoning analogously to \cite[Lemma 5.6]{Giusti77}, we obtain the a priori estimates
\begin{align}
|\eta(u, U) - \eta(v, U)| &\leq \|u - v\|_{L^1(\partial U)} \label{a priori estimate 1} \\
\eta(u, U) &\leq \|u\|_{L^1(\partial U)} \leq |\partial U| \cdot \|u\|_{L^\infty(M)}. \label{a priori estimate 2}
\end{align}

\subsection{The parallel propagator}
The key new ingredient for the proof of the regularity theorem on Riemannian manifolds is the parallel propagator
$$K(P, Q): T_Q'M \to T_P'M$$
which is defined whenever there exists a unique geodesic $\gamma$ from $Q$ to $P$.
It sends a cotangent vector to $M$ at $Q$ to its parallel transport along $\gamma$, which is a cotangent vector to $M$ at $P$.

To express the parallel propagator in coordinates, we recall from \cite[Chapter II, \S2]{baez1994gauge} that if $\Gamma$ is a square matrix of $1$-forms, its \dfn{path-ordered exponential} along a curve $\gamma: [0, 1] \to M$ is given by 
$$\mathcal Pe^{-\int_\gamma \Gamma} := \sum_{n=0}^\infty (-1)^n \int_{\Delta_n} \prod_{m=1}^n (\Gamma(\gamma(t_i)), \gamma'(t_i)) \dif t$$
where 
$$\Delta_n := \{t \in [0, 1]^n: t_1 \leq t_2 \leq \cdots \leq t_n\}$$
is the standard $n$-simplex.
The path-ordered exponential is then a square matrix; it is defined by a convergent series if $\Gamma$ is continuous \cite[Chapter II, \S2]{baez1994gauge}, and to first order, the path-ordered exponential is
\begin{equation}\label{path ordered exponential taylor series}
\mathcal Pe^{-\int_\gamma \Gamma} = I + O(|\gamma| \cdot \|\Gamma\|_{C^0}).
\end{equation}

\begin{proposition}
Suppose that there is a unique geodesic $\gamma$ from $Q$ to $P$.
Let $\Gamma$ be the Christoffel symbols of the Levi-Civita connection of $M$ acting on the cotangent bundle in some coordinate system, viewed as a $d \times d$-matrix of $1$-forms.
Then
\begin{equation}\label{path ordered exponential is propagator}
K(P, Q) = \mathcal Pe^{-\int_\gamma \Gamma}.
\end{equation}
\end{proposition}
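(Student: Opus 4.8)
The plan is to identify both sides of (\ref{path ordered exponential is propagator}) with the value at $t=1$ of the fundamental matrix of one and the same linear ordinary differential equation along $\gamma$, and then invoke uniqueness for linear systems. Reparametrize so that $\gamma\colon[0,1]\to M$ with $\gamma(0)=Q$ and $\gamma(1)=P$, and work in a coordinate chart containing $\gamma([0,1])$; this is implicit in the statement, since $\Gamma$, being a collection of Christoffel symbols, must be defined along $\gamma$ for the right-hand side to make sense at all.

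First I would write out the equation defining $K$. By definition, given $\omega_0\in T_Q'M$, its parallel transport along $\gamma$ is the curve $t\mapsto\omega(t)\in T_{\gamma(t)}'M$ characterized by $\nabla_{\gamma'(t)}\omega(t)=0$ and $\omega(0)=\omega_0$, and $K(P,Q)$ is the operator $\omega_0\mapsto\omega(1)$. Expanding $\nabla_{\gamma'}\omega=0$ in the chosen coordinates turns it into a linear system
\begin{equation}\label{parallel transport ODE}
\omega'(t) = B(t)\,\omega(t), \qquad \omega(0)=\omega_0,
\end{equation}
where $B(t)$ is the matrix obtained by contracting the cotangent-bundle Christoffel matrix $-\Gamma$ with $\gamma'(t)$. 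Since $B$ is smooth on the compact interval $[0,1]$, it is bounded in operator norm by some constant $C$; by Picard--Lindel\"of, (\ref{parallel transport ODE}) has a unique global solution, and its solution operator $\omega_0\mapsto\omega(1)$ is precisely $K(P,Q)$.

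Next I would verify that $U(t):=\mathcal Pe^{-\int_{\gamma|_{[0,t]}}\Gamma}$ is the fundamental matrix of (\ref{parallel transport ODE}), i.e.\ that $U'(t)=B(t)U(t)$ and $U(0)=I$; this is the computational core of the proof. Substituting $\gamma$ into the series defining the path-ordered exponential gives a Dyson series whose $n$-th term is an iterated integral over an $n$-simplex of an $n$-fold product of copies of $-\Gamma(\gamma'(\cdot))$, hence is bounded in operator norm by $C^nt^n/n!$; the same bound holds for the series obtained by formally differentiating each term in $t$, so both series converge uniformly on $[0,1]$ and term-by-term differentiation is legitimate. Differentiating the $n$-th term peels off the outermost integration variable, setting it equal to $t$, and produces $-\Gamma(\gamma'(t))=B(t)$ as the outermost factor times the $(n-1)$-st term; summing over $n$ gives $U'(t)=B(t)U(t)$, while $U(0)=I$ is the $n=0$ term.

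Finally, $U$ and the solution operator of (\ref{parallel transport ODE}) satisfy the same linear ODE with the same initial value $I$, so uniqueness forces $t\mapsto U(t)\omega_0$ to be the parallel transport of $\omega_0$ along $\gamma$; evaluating at $t=1$ gives $K(P,Q)=U(1)=\mathcal Pe^{-\int_\gamma\Gamma}$. I expect the main obstacle to be the term-by-term differentiation of the Dyson series above — equivalently, showing that the path-ordered exponential genuinely \emph{is} the fundamental solution of (\ref{parallel transport ODE}) rather than merely a formal expansion of it — together with the convention bookkeeping needed to identify $B(t)$ in (\ref{parallel transport ODE}) with the matrix appearing in the excerpt's definition of $\mathcal Pe^{-\int_\gamma\Gamma}$: the correct sign, and the row-versus-column convention for covectors that fixes the order of the matrix factors and decides whether a transpose appears. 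Everything else is a routine appeal to existence and uniqueness for linear ODEs.
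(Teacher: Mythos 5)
Your argument is correct, and it reconstructs precisely the standard proof: the paper does not supply its own argument for this proposition but instead cites Baez--Muniain, Chapter II, \S 2, where the path-ordered exponential is identified with the parallel-transport operator by exactly the ODE-uniqueness reasoning you give (term-by-term differentiation of the Dyson series to show it is the fundamental solution, then Picard--Lindel\"of). The one point worth keeping in view when you fill in the bookkeeping is that the $\Gamma$ in the statement is already the connection matrix on the cotangent bundle, so the extra sign in $\mathcal Pe^{-\int_\gamma \Gamma}$ is exactly what makes $B(t) = -(\Gamma(\gamma(t)), \gamma'(t))$ the coefficient matrix of the parallel-transport ODE for covectors.
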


To be more precise, if we use our coordinate system to identify $T_P'M$ and $T_Q'M$ with $\RR^d$, then the $d\times d$-matrix obtained from the path-ordered exponential can be viewed as a linear map
$$\mathcal Pe^{-\int_\gamma \Gamma}: T_Q'M \to T_P'M.$$
The assertion is that this map is exactly equal to $K(P, Q)$.
For a proof, see \cite[Chapter II, \S2]{baez1994gauge}.

We now use the parallel propagator to define an intrinsic averaging operator on the cotangent bundle.
Such an operator will necessarily depend on the choice of basepoint, since there is no canonical way to identify all of the cotangent spaces to $M$.

\begin{definition}
Let $\mu$ be a Radon measure, $U \subseteq M$ open with finite $\mu$-measure, $P \in U$, and $\xi$ a $1$-form on $U$.
Suppose that for every $Q \in U$ there exists a unique geodesic from $Q$ to $P$.
Then the \dfn{average} of $\xi$ in $U$ with respect to $\mu, P$ is 
$$\avg_{U, P, \mu} \xi := \frac{1}{\mu(U)} \int_U K(P, Q)\xi(Q) \dif \mu(Q).$$
\end{definition}

To be more explicit, $K(P, Q)\xi(Q)$ is a cotangent vector to $P$, so we have a vector-valued map 
\begin{align*}
U &\to T_PM \\
Q &\mapsto K(P, Q)\xi(Q).
\end{align*}
This vector-valued map has target a single vector space, rather than a vector bundle, so it makes sense to take its vector-valued integral.

\begin{proposition}\label{translation invariance}
Let $\mu$ be a Radon measure, $U \subseteq M$ open with finite $\mu$-measure, $P, Q \in U$, and $\xi$ a $1$-form on $U$.
Suppose also that $U$ has diameter $\leq \rho$ and for any $R \in U$ there exist unique geodesics from $R$ to $P$ and $Q$.
Then 
$$||\avg_{U, P, \mu} \xi| - |\avg_{U, Q, \mu} \xi|| \lesssim \rho^2 \|\xi\|_{L^\infty(\mu)}.$$
\end{proposition}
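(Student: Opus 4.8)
The plan is to compare the two vector-valued integrals $\int_U K(P,R)\xi(R)\dif\mu(R)$ and $\int_U K(Q,R)\xi(R)\dif\mu(R)$ by transporting the second one into $T_P'M$ via $K(P,Q)$. Since $|K(P,Q)v| = |v|$ for every covector $v$ (parallel transport is an isometry for the Levi-Civita connection), we have $|\avg_{U,Q,\mu}\xi| = |K(P,Q)\avg_{U,Q,\mu}\xi|$, so it suffices to bound
\[
\left| \avg_{U,P,\mu}\xi - K(P,Q)\,\avg_{U,Q,\mu}\xi \right|
= \frac{1}{\mu(U)}\left| \int_U \bigl( K(P,R) - K(P,Q)K(Q,R)\bigr)\xi(R)\dif\mu(R)\right|
\]
by the reverse triangle inequality $\bigl||a| - |b|\bigr| \le |a-b|$. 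Thus everything reduces to estimating the operator norm of $K(P,R) - K(P,Q)K(Q,R)$ for $P,Q,R \in U$ and then integrating.

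First I would fix normal coordinates centered at a point of $U$ (or simply work in any fixed coordinate chart containing $U$, shrinking $\rho$ if necessary so $U$ lies in one chart where the geodesics in question are unique); in such coordinates the Christoffel symbols $\Gamma$ are smooth, hence bounded on a neighborhood of $\bar U$, so $\|\Gamma\|_{C^0} \lesssim 1$ on $U$ with a constant depending only on $M$ and the chart. Using the identification of $T_P'M$, $T_Q'M$, $T_R'M$ with $\RR^d$ afforded by the coordinates and the representation (\ref{path ordered exponential is propagator}) of $K$ as a path-ordered exponential, both $K(P,R)$ and $K(P,Q)K(Q,R)$ are products/path-ordered exponentials along geodesic segments of length $\le \rho$. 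By the first-order expansion (\ref{path ordered exponential taylor series}) each of $K(P,R)$, $K(P,Q)$, $K(Q,R)$ equals $I + O(\rho\|\Gamma\|_{C^0})$. The zeroth-order terms of $K(P,R)$ and $K(P,Q)K(Q,R)$ both equal $I$ and cancel; the first-order ($O(\rho)$) terms are line integrals of $\Gamma$ along the respective geodesic segments, and here I would use that the geodesic triangle $PQR$ has all sides $\le\rho$, so by continuity of $\Gamma$ these first-order terms also agree up to an error $O(\rho^2)$ — more carefully, the difference of the two first-order line integrals is a line integral of $\Gamma$ around a closed loop of diameter $O(\rho)$, which by Stokes/the fundamental theorem of calculus is $O(\rho^2 \|\dif\Gamma\|_{C^0} + \rho^2\|\Gamma\|_{C^0}^2)$. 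The remaining second- and higher-order terms in each path-ordered exponential are already $O(\rho^2\|\Gamma\|_{C^0}^2)$. Collecting, $\|K(P,R) - K(P,Q)K(Q,R)\|_{\mathrm{op}} \lesssim \rho^2$, uniformly for $P,Q,R\in U$, with constant depending only on $M$ (through $\|\Gamma\|_{C^1}$ near $\bar U$).

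Plugging this into the integral bound above gives
\[
\left| \avg_{U,P,\mu}\xi - K(P,Q)\,\avg_{U,Q,\mu}\xi\right|
\le \frac{1}{\mu(U)}\int_U \bigl\|K(P,R)-K(P,Q)K(Q,R)\bigr\|_{\mathrm{op}}\,|\xi(R)|\dif\mu(R)
\lesssim \rho^2\|\xi\|_{L^\infty(\mu)},
\]
and combining with $|\avg_{U,Q,\mu}\xi| = |K(P,Q)\avg_{U,Q,\mu}\xi|$ and the reverse triangle inequality finishes the proof. The main obstacle I anticipate is the bookkeeping in the previous paragraph: making precise that the $O(\rho)$ first-order terms of $K(P,R)$ and of the composition $K(P,Q)K(Q,R)$ cancel up to $O(\rho^2)$. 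The cleanest way is probably to note that $R\mapsto K(P,Q)K(Q,R)$ and $R\mapsto K(P,R)$ are both solutions of the same parallel-transport ODE in $R$ along the relevant geodesics but with initial data at different basepoints ($Q$ versus $P$), and the two basepoints differ by $K(P,Q) = I + O(\rho)$; a Grönwall-type estimate for the difference of the two solutions then yields the $O(\rho^2)$ bound directly, avoiding any explicit manipulation of the path-ordered series. One should also remark that the hypothesis "diameter $\le\rho$ and unique geodesics" guarantees the geodesic triangle $PQR$ stays inside a region where $\Gamma$ is controlled, so the implicit constant genuinely depends only on $M$ (locally).
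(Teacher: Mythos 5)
Your proposal is correct, but it follows a genuinely different route from the paper, and the paper's route is substantially shorter. The paper chooses normal coordinates \emph{based at $P$}. Since the Christoffel symbols of the Levi--Civita connection vanish at the origin of normal coordinates and $U$ has diameter $\leq \rho$, one gets the pointwise bound $\|\Gamma\|_{C^0(U)} \lesssim \rho$, not merely $\lesssim 1$. Feeding this into (\ref{path ordered exponential taylor series}) with path length $\lesssim \rho$ gives $\mathcal Pe^{-\int_R^P\Gamma} = I + O(\rho^2)$ and likewise for $\mathcal Pe^{-\int_R^Q\Gamma}$, so the difference of the two propagators is already $O(\rho^2)$ with no cancellation to establish. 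The paper then applies the reverse triangle inequality directly to the two vector-valued integrals and is done. There is no factoring through $K(P,Q)$, no holonomy loop, no Stokes' theorem.

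Your argument instead works in a chart where $\|\Gamma\|_{C^0} \lesssim 1$, factors $K(Q,R)$ through $K(P,Q)$ (using that parallel transport is a fiberwise isometry), and bounds the holonomy defect $K(P,R) - K(P,Q)K(Q,R)$ by showing that the $O(\rho)$ first-order line integrals cancel up to a Stokes-theorem term over a filling of the geodesic triangle $PQR$. This is also correct, and in some ways more geometric (the error is literally holonomy of a small loop, controlled by curvature times area) and more robust (it does not rely on a coordinate choice that suppresses $\Gamma$). But it is the harder road: the bookkeeping you flag at the end of your proposal is exactly what the paper's choice of normal coordinates is engineered to avoid. One small thing your proof does more carefully than the paper: by passing through the isometry $K(P,Q)$ you compare the two averages inside the single space $T_P'M$ with its $g(P)$-inner product, whereas the paper silently treats both integrals as vectors in $\RR^d$ and uses the euclidean norm, relying implicitly on the fact that $g(Q)$ deviates from $g(P)$ by $O(\rho^2)$ in these coordinates, which is an error of the same size as the one being estimated.

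Finally, your proposal as written slightly undersells the normal-coordinate step: you write that fixing normal coordinates at a point of $U$ gives $\|\Gamma\|_{C^0}\lesssim 1$, but it actually gives the stronger $\|\Gamma\|_{C^0(U)}\lesssim\rho$, which would let you skip the entire cancellation argument and conclude immediately as the paper does. So if you keep the normal-coordinate choice you may as well exploit it; the Stokes/holonomy version is only needed if you insist on working in an arbitrary chart.
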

\begin{proof}
Choose normal coordinates at $P$, so that $\|\Gamma\|_{C^0(U)} \lesssim \rho$.
Moreover, the geodesics from $R$ to $P, Q$ have length $\leq \rho$.
Therefore by the reverse triangle inequality,
\begin{align*}
||\avg_{U, P, \mu} \xi| - |\avg_{U, Q, \mu} \xi||
&= \frac{1}{\mu(U)} \left|\left|\int_U \mathcal P e^{-\int_R^P \Gamma} \xi(R) \dif \mu(R)\right| - \left|\int_U \mathcal P e^{-\int_R^Q \Gamma} \xi(R) \dif \mu(R)\right|\right| \\
&\leq \frac{\|\xi\|_{L^\infty(\mu)}}{\mu(U)} \int_U \left|\mathcal P e^{-\int_R^P \Gamma} - \mathcal Pe^{-\int_R^Q \Gamma}\right| \dif \mu(R) \\
&\leq \|\xi\|_{L^\infty(\mu)} \sup_{R \in U} \left|\mathcal Pe^{-\int_R^P \Gamma} - \mathcal Pe^{-\int_R^Q \Gamma}\right|.
\end{align*}
By (\ref{path ordered exponential taylor series}), this quantity is $\lesssim \rho^2 \|\xi\|_{L^\infty(\mu)}$.
\end{proof}


\section{Monotonicity formula}\label{MollifierSection}
Functions of least gradient satisfy a monotonicity formula (see \cite[Theorem 5.12]{Giusti77} for a proof on euclidean space) which is essential to the proof of the de Giorgi--Miranda theorem in several ways.
Most prominently, the monotonicity formula implies the existence of tangent cones to sets of least perimeter (allowing us to apply the hypothesis $d \leq 7$), implies certain precise estimates on the perimeter of a set of least perimeter, and governs the mollification process used in the proof of the de Giorgi lemma.

To formulate the monotonicity formula, define for a function $u \in BV(M)$ and $P \in M$ the quantity
\begin{equation}\label{integral of du}
I(u, P, r) := \int_{B(P, r)} K(P, Q) \frac{\dif u}{|\dif u|}(Q) \dif \mu(Q)
\end{equation}
where $\dif u/|\dif u|$ is the polar section and $\dif \mu := \star|\dif u|$ is the total variation measure.
For the proof and corollaries of the monotonicity formula it will be useful to establish some notation: $\dif \sigma$ is the area form on the unit sphere $\Sph^{d - 1}$ and $B_r := B(P, r)$.
In normal polar coordinates $(r, \theta) \in \RR_+ \times \Sph^{d - 1}$, the area form on geodesic spheres takes the form 
\begin{equation}\label{geodesic spheres area form}
\dif S_{\partial B_\rho} = (\rho^{d - 1} + O(\rho^{d + 1})) \dif \sigma.
\end{equation}

\begin{proposition}[monotonicity formula]\label{Monotone}
There exist $A \geq 0$ and $r_* > 0$ such that for any function $u$ of least gradient on $M$, with total variation measure $\mu$, and any $0 < r \leq r_*$,
\begin{equation}\label{weak monotonicity}
\frac{\dif}{\dif r}\left[e^{Ar^2}r^{1 - d} \mu(B(P, r))\right] \geq 0.
\end{equation}
Moreover, for any $0 < r_1 < r_2 \leq r_*$,
\begin{align*}
&|r_2^{1 - d} I(u, P, r_2) - r_1^{1 - d} I(u, P, r_1)|^2 \\
&\qquad \lesssim \left(1 + (d - 1) \log \frac{r_2}{r_1}\right) \left(r_2^{1 - d}\mu(B(P, r_2)) \right)
\left(\int_{r_1}^{r_2} \partial_r \left[e^{Ar^2} r^{1 - d} \mu(B(P, r))\right] \dif r\right)\\
&\qquad \qquad + r_2^{6-2d} \left(\mu(B(P, r_2))\right)^2.
\end{align*}
\end{proposition}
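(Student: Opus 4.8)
The plan is to run the comparison argument of de Giorgi and Miranda, treating the curvature as an $O(\rho^2)$‑relative perturbation of the flat case throughout. Fix the basepoint $P$ and work in normal coordinates centred at $P$; then each geodesic ball $B_\rho := B(P,\rho)$ is a Euclidean ball, $g_{ij} = \delta_{ij} + O(\rho^2)$, and $\|\Gamma\|_{C^0(B_\rho)} = O(\rho)$. Write $m(\rho) := \mu(B_\rho)$ and $\nu := \dif u/|\dif u|$, decompose $\nu = a\,\dif r + \nu^{\mathrm t}$ into its radial and tangential parts relative to $\partial B_\rho$ (so $a := \langle \partial_r,\nu\rangle$, $|\nu^{\mathrm t}|^2 = 1 - a^2$, and $|\nu - \nu^{\mathrm t}| = |a| \le 1$), and let $\langle\mu\rangle_\rho$ be the slice of $\mu$ by $\partial B_\rho$, normalised so that $m'(\rho) = \langle\mu\rangle_\rho(\partial B_\rho)$ for a.e.\ $\rho$. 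We use freely that least gradient localises to each $B_\rho \Subset \operatorname{int} M$.

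For the weak monotonicity (\ref{weak monotonicity}), following Miranda compare $u$ on $B_\rho$ with the function $v$ constant along the geodesic rays from $P$ and equal to $u|_{\partial B_\rho}$ on $\partial B_\rho$; least gradient gives $m(\rho) \le \|\dif v\|_{TV(B_\rho)}$ for a.e.\ $\rho \le r_*$, and a computation in normal polar coordinates using (\ref{geodesic spheres area form}) gives $\|\dif v\|_{TV(B_\rho)} = \tfrac{\rho}{d-1}\big(1 + O(\rho^2)\big)\|\dif(u|_{\partial B_\rho})\|_{TV(\partial B_\rho)}$. Using that $\|\dif(u|_{\partial B_\rho})\|_{TV(\partial B_\rho)} = \int_{\partial B_\rho}\sqrt{1-a^2}\,\dif\langle\mu\rangle_\rho$ and $m'(\rho) = \int_{\partial B_\rho}\dif\langle\mu\rangle_\rho$ for a.e.\ $\rho$, while $1 - \sqrt{1-a^2} \ge \tfrac12 a^2$, the two displays combine to $m'(\rho) \ge \tfrac{d-1}{\rho}m(\rho) - C\rho\,m(\rho) + \tfrac12\int_{\partial B_\rho}a^2\,\dif\langle\mu\rangle_\rho$ for a.e.\ $\rho \le r_*$. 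Choosing $A$ with $2A \ge C$ and setting $D(\rho) := \partial_\rho\big(e^{A\rho^2}\rho^{1-d}m(\rho)\big)$, multiplication by $\rho^{1-d}$ yields $D(\rho) \ge 0$ — which is (\ref{weak monotonicity}) — together with
\[
\int_{\partial B_\rho} a^2\,\dif\langle\mu\rangle_\rho \;\le\; 2\rho^{d-1}D(\rho)\qquad\text{for a.e.\ }\rho \le r_*,
\]
and (from monotonicity) the uniform bound $\rho^{1-d}m(\rho) \lesssim \sigma^{1-d}m(\sigma)$ for $\rho \le \sigma \le r_*$.

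For the excess estimate, set $g(\rho) := \rho^{1-d}I(u,P,\rho) \in T_P'M$. First peel off the propagator: by (\ref{path ordered exponential taylor series}) and $\|\Gamma\|_{C^0(B_\rho)} = O(\rho)$ we have $|K(P,Q) - \mathrm{id}| \le C\rho^2$ on $B_\rho$, so $g = g_0 + g_{\mathrm{err}}$ with $g_0(\rho) := \rho^{1-d}\int_{B_\rho}\nu\,\dif\mu$ (coordinate components) and $|g_{\mathrm{err}}(\rho)| \le C\rho^2\cdot\rho^{1-d}m(\rho) \lesssim r_2^{3-d}m(r_2)$ for $\rho \le r_2$; this term alone produces the $r_2^{6-2d}m(r_2)^2$ in the statement. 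For $g_0$, two integrations by parts — the Euclidean divergence theorem in the chart (valid because $B_\rho$ is a Euclidean ball), then the divergence theorem on the round sphere applied to the coordinate functions $x \mapsto x_i$ — realise $\tfrac{d-1}{\rho}\int_{B_\rho}\nu\,\dif\mu$ as the total vector mass of the distributional gradient of $u|_{\partial B_\rho}$, which by $BV$ trace and slicing theory equals $\int_{\partial B_\rho}\nu^{\mathrm t}\,\dif\langle\mu\rangle_\rho + O(\rho^2 m'(\rho))$. Since also $\tfrac{\dif}{\dif\rho}\int_{B_\rho}\nu\,\dif\mu = \int_{\partial B_\rho}\nu\,\dif\langle\mu\rangle_\rho$ for a.e.\ $\rho$, subtracting gives
\[
g_0'(\rho) \;=\; \rho^{1-d}\!\int_{\partial B_\rho}(\nu - \nu^{\mathrm t})\,\dif\langle\mu\rangle_\rho \;+\; O\!\big(\rho^{3-d}m'(\rho)\big),
\]
whence, as $|\nu - \nu^{\mathrm t}| = |a|$, Cauchy--Schwarz on $\langle\mu\rangle_\rho$ together with the boxed bound from Part 1 gives $|g_0'(\rho)| \le \big(\rho^{1-d}m'(\rho)\big)^{1/2}D(\rho)^{1/2} + C\rho^{3-d}m'(\rho)$.

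It remains to integrate. Integrating $|g_0'|$ over $[r_1,r_2]$ and applying Cauchy--Schwarz in $\rho$ (peeling off the factor $D(\rho)^{1/2}$),
\[
|g_0(r_2) - g_0(r_1)| \;\le\; \Big(\!\int_{r_1}^{r_2}\!\rho^{1-d}m'(\rho)\,\dif\rho\Big)^{1/2}\!\Big(\!\int_{r_1}^{r_2}\!D(\rho)\,\dif\rho\Big)^{1/2} \;+\; C\!\int_{r_1}^{r_2}\!\rho^{3-d}m'(\rho)\,\dif\rho,
\]
and integration by parts, using $\rho^{1-d}m(\rho) \lesssim r_2^{1-d}m(r_2)$, gives $\int_{r_1}^{r_2}\rho^{1-d}m'(\rho)\,\dif\rho \lesssim \big(1 + (d-1)\log(r_2/r_1)\big)r_2^{1-d}m(r_2)$ and $\int_{r_1}^{r_2}\rho^{3-d}m'(\rho)\,\dif\rho \lesssim r_2^{3-d}m(r_2)$. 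Squaring, adding $|g_{\mathrm{err}}(r_2) - g_{\mathrm{err}}(r_1)|^2 \lesssim r_2^{6-2d}m(r_2)^2$, and noting $\int_{r_1}^{r_2}D(\rho)\,\dif\rho = \int_{r_1}^{r_2}\partial_r\big[e^{Ar^2}r^{1-d}\mu(B(P,r))\big]\,\dif r$ yields the asserted inequality, since $g(r) = r^{1-d}I(u,P,r)$. The main obstacle throughout is the bookkeeping of the curvature rather than any conceptual point: one must check that every curvature contribution — in the cone comparison, in the two integrations by parts, and in the slicing and trace identifications — is $O(\rho^2)$ relative to its flat leading term, so that after integration against $\dif\rho$ and the monotonicity bound of Part 1 it is swallowed by the single term $r_2^{6-2d}m(r_2)^2$; this is precisely what forces one to compute in normal coordinates at $P$, where geodesic balls are Euclidean balls, $g = \delta + O(\rho^2)$, the metric on $\partial B_\rho$ is $\rho^2$ times a perturbed round metric by (\ref{geodesic spheres area form}), and $K(P,Q) - \mathrm{id} = O(\rho^2)$ by (\ref{path ordered exponential taylor series}). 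The remaining, more routine difficulty is to make the $BV$ steps rigorous: the localisation of least gradient to the $B_\rho$, the existence of the trace $u|_{\partial B_\rho}$ and the identification of its gradient with the tangential part of the slice of $\dif u$, and the fact that for a.e.\ $\rho$ the slice $\langle\mu\rangle_\rho$ carries total mass $m'(\rho)$.
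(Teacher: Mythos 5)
Your proposal is correct and follows essentially the same route as the paper: the cone comparison yields both the weak monotonicity and the bound on the radial part $\int_{\partial B_\rho} a^2\,\dif\langle\mu\rangle_\rho$ (the paper's Lemma \ref{monotonicity lemma}), the propagator is peeled off in normal coordinates via (\ref{path ordered exponential taylor series}) with the same $O(r_2^{3-d})\mu(B_{r_2})$ error, and the conclusion follows from the same double Cauchy--Schwarz. Your only organizational deviation --- differentiating $\rho^{1-d}\int_{B_\rho}\nu\,\dif\mu$ and cancelling the $\tfrac{d-1}{\rho}$ term by an integration by parts on the sphere --- is an equivalent repackaging of the paper's step of writing $(I_r)_\alpha$ as $\int_{\Sph^{d-1}} u(r,\theta)\iota_\alpha(\theta)\,\dif\sigma(\theta)$ and applying the fundamental theorem of calculus in $r$.
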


\begin{lemma}\label{monotonicity lemma}
There exists $A$ such that for every $u \in C^1(B_R)$, $0 < r_1 < r_2 < R$, if we let
$$E(r) := \mu(B_r) - \eta(u, r),$$
so that $E(R) = 0$ iff $u$ has least gradient, then there exists $A \geq 0$ such that for $R > 0$ small,
\begin{equation}\label{monotonicity lemma eqn}
0 \leq \int_{B_{r_2} \setminus B_{r_1}} \star r^{1 - d}\frac{(\partial_ru)^2}{|\dif u|} \leq 2\int_{r_1}^{r_2} \partial_r \left[e^{Ar^2} r^{1-d}\int_{B_r} \star |\dif u|\right] + \frac{O(E(r))}{r^d} \dif r.
\end{equation}
\end{lemma}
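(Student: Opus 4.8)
The plan is to derive (\ref{monotonicity lemma eqn}) by testing the least-gradient competitor condition against a well-chosen radial perturbation and exploiting the change of variables into normal polar coordinates based at $P$, following the flat-space argument of \cite[Theorem 5.12]{Giusti77} but tracking the curvature error terms through (\ref{geodesic spheres area form}). First I would fix $P$, pass to normal coordinates at $P$ so that the volume form is $(1 + O(r^2))\dif x^0 \wedge \cdots \wedge \dif x^{d-1}$ and the geodesic spheres satisfy (\ref{geodesic spheres area form}); by Proposition \ref{Coarea2} and the remark after it we may assume without loss that $M$ is a domain in $\RR^d$ equipped with this metric. For $u \in C^1(B_R)$ the total variation measure is the graphical area measure $\star|\dif u| = \sqrt{1 + |\nabla u|^2}\,\dif x$ (up to the metric factor), and the key identity is the pointwise decomposition $|\dif u|^2 = (\partial_r u)^2 + |\slashed\nabla u|^2$ where $\slashed\nabla$ is the spherical gradient; this is where the term $\star r^{1-d}(\partial_r u)^2/|\dif u|$ originates after integrating against $\dif\sigma$.

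The central step is the comparison. On each sphere $\partial B_r$, consider the cone-like competitor obtained by extending the trace $u|_{\partial B_r}$ radially (homogeneously of degree $0$) inward; its gradient energy on $B_r$ is controlled by $\int_0^r \int_{\Sph^{d-1}} s^{d-2}|\slashed\nabla u|^2\,\dif\sigma\,\dif s$ plus curvature errors from (\ref{geodesic spheres area form}). Since $u$ has least gradient, $\mu(B_r) = \eta(u, r)$ when $E \equiv 0$, and in general the defect is exactly $E(r)$; comparing $\mu(B_r)$ to the energy of this competitor and differentiating in $r$ produces a differential inequality of the form
\begin{equation*}
\frac{\dif}{\dif r}\bigl[r^{1-d}\mu(B_r)\bigr] \geq r^{1-d}\int_{\Sph^{d-1}} \frac{(\partial_r u)^2}{\sqrt{1+|\nabla u|^2}}\,\dif\sigma - C r\, r^{1-d}\mu(B_r) - \frac{C\,E(r)}{r^d},
\end{equation*}
where the $-Cr\,r^{1-d}\mu(B_r)$ term absorbs all the $O(r^2)$ metric distortions via (\ref{geodesic spheres area form}) and the normal-coordinate expansion of the volume form. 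Choosing $A$ so that $e^{Ar^2}$ dominates this exponential error — i.e. $\partial_r(e^{Ar^2} r^{1-d}\mu(B_r)) \geq e^{Ar^2}(\text{radial energy density}) - O(E(r))/r^d$ — and then integrating from $r_1$ to $r_2$ yields (\ref{monotonicity lemma eqn}) after converting $\int_{\Sph^{d-1}}(\partial_r u)^2/|\dif u|\,\dif\sigma$ back to the ambient integral $\int_{B_{r_2}\setminus B_{r_1}} \star r^{1-d}(\partial_r u)^2/|\dif u|$ using (\ref{geodesic spheres area form}) once more (this is the source of the factor $2$, which gives room for these conversion errors). Nonnegativity of the left-hand integrand is immediate.

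The main obstacle I anticipate is making the radial-extension competitor argument rigorous at the level of $BV$ traces rather than smooth functions: one must justify that $u|_{\partial B_r}$ restricts to an $L^1$ (indeed $W^{1,1}$, since $u \in C^1$) function on almost every sphere, that the radially-extended competitor lies in $BV(B_r)$ with the claimed energy bound, and that it is an admissible competitor for $\eta(u, r)$ — i.e. it agrees with $u$ on $\partial B_r$ in the trace sense. For $u \in C^1$ this is cleaner than the general $BV$ case, but the curvature bookkeeping still requires that all error terms be uniform in $r$ and in the sphere, which is guaranteed by working in a fixed small normal coordinate ball where (\ref{geodesic spheres area form}) holds with constants depending only on the $C^2$ bounds of the metric. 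A secondary technical point is Sard's theorem / coarea (Proposition \ref{Coarea2}) to ensure the slicing $u|_{\partial B_r}$ is well-behaved for a.e. $r$, so that differentiating $r \mapsto \mu(B_r)$ is legitimate.
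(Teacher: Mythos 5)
Your competitor is the right one and agrees with the paper: for fixed $s\in[r_1,r_2]$ the paper takes $v(r,\theta)=u(s,\theta)$, the radially constant extension of $u|_{\partial B_s}$, and the key decomposition $|\dif u|^2=(\partial_r u)^2+|\slashed\nabla u|^2$ on $\partial B_s$ (via Gauss' lemma, so that $\dif v$ is the projection of $\dif u$ onto $T'\partial B_s$) is exactly how the term $(\partial_r u)^2/|\dif u|$ appears, through the Taylor bound $|\dif v|\le |\dif u|\sqrt{1-(\partial_r u)^2/|\dif u|^2}\le |\dif u|-\frac{(\partial_r u)^2}{2|\dif u|}$. The structure "compare $\mu(B_s)$ with $\eta(u,s)$ via the competitor, turn it into a differential inequality in $s$ for $e^{As^2}s^{1-d}\mu(B_s)$, integrate over $s\in[r_1,r_2]$" is also the paper's structure.

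However, there is a genuine conceptual error that would derail the argument if you ran it as written. You assert that for $u\in C^1(B_R)$ the total variation measure is the graphical area measure $\star|\dif u|=\sqrt{1+|\nabla u|^2}\,\dif x$. That is false: for $u\in C^1$ the total variation measure of $u$ is $|\nabla u|\,\dif V$, with no additive $1$ under the square root. The quantity $\sqrt{1+|\nabla u|^2}$ is the area density of the \emph{graph} of $u$ in $M\times\RR$ (i.e.\ the perimeter of the subgraph), which belongs to the de Giorgi-lemma analysis of Section \ref{smoothcase}, not here. Consequently your proposed differential inequality, which has $(\partial_r u)^2/\sqrt{1+|\nabla u|^2}$ on the right, is inconsistent with the target estimate, which has $(\partial_r u)^2/|\dif u|$; replacing $\sqrt{1+|\nabla u|^2}$ by $|\nabla u|$ throughout is what the paper does. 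Related to this, the competitor's relevant functional is its total variation $\int_0^s\int_{\Sph^{d-1}}\sigma^{d-2}|\slashed\nabla u|\,\dif\theta\,\dif\sigma$, linear in $|\slashed\nabla u|$, not the quadratic ``gradient energy'' $\int\sigma^{d-2}|\slashed\nabla u|^2$ you wrote; the least-gradient problem is a TV minimization, not a Dirichlet-energy one, and the squared quantity has the wrong homogeneity. Finally, the factor $2$ in \eqref{monotonicity lemma eqn} comes from the $\tfrac12$ in the Taylor bound $\sqrt{1-t}\le 1-\tfrac t2$ applied to $|\dif v|$, not from slack to absorb curvature-conversion errors; those are instead absorbed into the exponential weight $e^{Ar^2}$ and the $O(E(r))/r^d$ term.

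Your worries about trace regularity and Sard are moot here since the lemma assumes $u\in C^1$, as you partially acknowledge; the $BV$ approximation is handled later, in the proof of Proposition \ref{Monotone}, not in Lemma \ref{monotonicity lemma} itself.
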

\begin{proof}
We follow the proof of \cite[Lemma 5.8]{Giusti77}.
This result is coordinate-invariant, so we may use whichever coordinates are convenient: we in fact use normal polar coordinates $(r, \theta)$.
By (\ref{geodesic spheres area form}), there exists $A \geq 0$ such that $e^{A\rho^2} \sqrt{\det g|_{\partial B_\rho}}$ is increasing.

Fix $s \in [r_1, r_2]$ and introduce a competitor $v(r, \theta) = u(s, \theta)$.
From the definition of $\eta$,
$$\eta(u, s) \leq \int_U \star |\dif v| = \int_0^s \int_{\partial B_r} \star_r |\dif v| \dif r.$$
Since $\partial_r v = 0$,
$$\int_{\partial B_r} \star_r |\dif v| \leq e^{As^2} \frac{\tilde r^{d - 1}}{s^{d - 1}} \int_{\partial B_s} \star_s |\dif v|.$$
So by Fubini's theorem
\begin{align*}
\eta(u, s) &\leq e^{As^2} \int_0^s \frac{r^{d - 1}}{s^{d - 1}} \dif r \cdot \int_{\partial B_s} \star_s |\dif v| = \frac{s e^{As^2}}{d} \int_{\partial B_s} \star_s |\dif v|\\
&\leq \frac{s e^{As^2}}{d - 1} \int_{\partial B_s} \star_s |\dif v|.
\end{align*}
By Gauss' lemma, $\partial_r$ is orthogonal to $\partial B_s$, so $\dif v$ is the orthogonal projection of $\dif u$ onto $T' \partial B_s$, and its orthocomplement is $\partial_r u$. Therefore by Taylor's theorem,
$$\int_{\partial B_s} \star_s |\dif v| \leq \int_{\partial B_s} \star_s |\dif u| \sqrt{1 - \frac{(\partial_r u)^2}{|\dif u|^2}} \leq \int_{\partial B_s} \star_s \left[|\dif u| - \frac{(\partial_r u)^2}{2 |\dif u|}\right]$$
or in other words
\begin{align*}
\int_{\partial B_s} \star_s \frac{(\partial_r u)^2}{2|\dif u|} &\leq \int_{\partial B_s} \star_s |\dif u| - \frac{d - 1}{s} e^{-As^2} \eta(u, s)\\
&\leq \int_{\partial B_s} \star_s |\dif u| - \frac{d - 1}{s} e^{-As^2} \int_{B_s} \star |\dif u| - O(s^{-1}E(s)).
\end{align*}
We moreover have for $\tilde A \geq 0$ that
$$e^{-\tilde As^2} \partial_s \left[e^{\tilde As^2} s^{1 - d} \int_{B_s} \star |\dif u|\right] = \left[2\tilde As^{2 - d} - \frac{d - 1}{s^d}\right]\int_{B_s} \star |\dif u| + s^{1 - d} \int_{\partial B_s} \star_s |\dif u|$$
so if we choose $\tilde A$ so that
$$-\frac{d - 1}{s} e^{-As^2} = 2\tilde As - \frac{d - 1}{s}$$
then
$$s^{1 - d} \int_{\partial B_s} \star_s |\dif u| - (d - 1)\frac{e^{-\tilde As^2}}{s^d} \int_{B_s} \star|\dif u| \leq e^{-\tilde As^2} \partial_s\left(e^{\tilde As^2} s^{1 - d} \int_{B_s} \star|\dif u|\right).$$
We moreover have $e^{-\tilde As^2} \leq 1$, so we can now integrate with respect to $\dif s$ and rename $\tilde A$ to $A$ to conclude.
\end{proof}

\begin{proof}[Proof of Proposition \ref{Monotone}]
Working in normal coordinates $(x^\alpha)$ based at $P$, we first computing using (\ref{path ordered exponential taylor series})
$$I_r := r^{1 - d} I(u, P, r) = r^{1 - d} \left[\int_{B(P, r)} \star \partial_\alpha u\right] \dif x^\alpha(P) + O(r^{3 - d}) \|\dif u\|_{TV(B(P, r))}.$$
Let $(\iota_0, \dots, \iota_{d - 1}): \Sph^{d - 1} \to \RR^d$ be the embedding of the unit sphere.
Applying the fundamental theorem of calculus,
\begin{align*}
(I_r)_\alpha &= \left[\int_{\Sph^{d - 1}} u(r, \theta) \iota_\alpha(\theta) \dif \sigma(\theta)\right] + O(r^{3 - d}) \|\dif u\|_{TV(B(P, r))}
\end{align*}
where we identified $\partial B_r$, the domain of $u(r, \cdot)$, with $\Sph^{d - 1}$ using normal coordinates.
Therefore
\begin{equation}\label{monotone dump the metric}
|I_{r_2} - I_{r_1}| \leq \int_{\Sph^{d - 1}} |u(r_2, \theta) - u(r_1, \theta)| \dif \sigma(\theta) + O(r^{3 - d}) \|\dif u\|_{TV(B(P, r))}.
\end{equation}
The metric $g$ plays no role in the dominant term of (\ref{monotone dump the metric}), so we may use \cite[Lemma 5.3]{Giusti77} to bound
$$0 \leq \int_{\Sph^{d - 1}} |u(r_2, \theta) - u(r_1, \theta)| \dif \sigma(\theta) \leq \int_{\Sph^{d - 1}} \int_{r_1}^{r_2} r^{1 - d}|\partial_r u(r, \theta)| \dif r \dif\sigma(\theta).$$
To reintroduce the metric we posit that $r_2$ is small enough that $\dif r \dif \sigma(\theta) \leq \star 2$.
We therefore have
\begin{equation}\label{monotone before cs}
\int_{\Sph^{d - 1}} \int_{r_1}^{r_2} r^{1 - d}|\partial_r u(r, \theta)| \dif r \dif\sigma(\theta) \leq 2 \int_{B_{r_2} \setminus B_{r_1}} \star r^{1 - d}|\partial_r u|
\end{equation}
and if we apply the Cauchy-Schwarz inequality and approximate $u$ by $C^1$ functions as on \cite[pg68]{Giusti77}, it follows from Lemma \ref{monotonicity lemma} that the right-hand side of (\ref{monotone before cs}) is
$$\lesssim \sqrt{\int_{B_{r_2} \setminus B_{r_1}} \star r^{1 - d} |\dif u|} \sqrt{\int_{r_1}^{r_2} \partial_r \left[e^{Ar^2} r^{1-d}\int_{B_r} \star |\dif u|\right] \dif r}.$$
The monotonicity (\ref{weak monotonicity}) follows at once.

Integrating by parts,
\begin{align*}
\int_{B_{r_2} \setminus B_{r_1}} \star r^{1 - d} |\dif u| &= \int_{r_1}^{r_2} r^{1 - d} \partial_r \int_{B_r} \star |\dif u| \dif r \\
&\leq r^{1 - d} \int_{B_r} \star |\dif u| + (d - 1) \int_{r_1}^{r_2} r^{-d} \int_{B_r} \star |\dif u| \dif r.
\end{align*}
Using (\ref{weak monotonicity}) we bound this second integral as
\begin{align*}
\int_{r_1}^{r_2} r^{-d} \int_{B_r} \star |\dif u| \dif r &\leq r^{1 - d} \log \frac{r_2}{r_1} \int_{B_{r_2}} \star |\dif u|.
\end{align*}
If we set
$$J_r := r^{1 - d} \int_{B_r} \star |\dif u|$$
then we can sum up our progress so far as
$$|I_{r_2} - I_{r_1}| \lesssim \sqrt{\left[1 + \log \frac{r_2}{r_1}\right] J_{r_2}} \sqrt{e^{Ar_2^2} J_{r_2} - e^{Ar_1^2} J_{r_1}} + r_2^2 J_{r_2}.$$
The claim now follows by squaring both sides and applying Cauchy-Schwarz.
\end{proof}

For a function $u$ on $M$, $P \in M$, we define the \dfn{blowup} of $u$ at $P$ to be the net of functions $u_t: T_PM \to \RR$, given by
$$u_t(v) := u\left(\exp_P(tv)\right).$$
If $u$ has least gradient and $M$ is curved, then (unlike in the flat case) its blowup $u_t$ need not have least gradient, but at least satisfies 
\begin{equation}\label{approximately least gradient}
\limsup_{t \to 0} \|\dif u_t\|_{TV(U)} \leq \limsup_{t \to 0} \eta(u_t, U) < \infty 
\end{equation}
where $U$ is any open subset of $T_PM$ with Lipschitz boundary.
This can be easily seen by Taylor expanding the metric in normal coordinates based at $P$; as the scale parameter $t \to 0$, the corrections from the metric vanish.
The Miranda stability theorem \cite[Osservazione 3]{Miranda66}, which traditionally has been stated for sequences of functions of least gradient, can be easily modified to hold in this setting.
We omit the details.

\begin{lemma}[Miranda stability theorem]\label{Miranda convergence}
If a net of functions $(u_t)$ (not necessarily of the same trace) satisfies (\ref{approximately least gradient}) for every open $U \Subset M$ with Lipschitz boundary,
then a subsequence converges weakly in $BV_\loc(M)$ to some function $u$ of least gradient.
\end{lemma}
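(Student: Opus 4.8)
The plan is to carry over the classical argument for the stability of perimeter minimizers under $L^1_\loc$ convergence (in the spirit of \cite[Theorem 4.22]{Giusti77}), with two adjustments forced by the hypotheses: the $u_t$ are only \emph{approximately} of least gradient, and $L^1_\loc$ convergence controls neither the traces of the $u_t$ on a fixed hypersurface nor, through upper semicontinuity, their total variations.

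\emph{Step 1: extracting the subsequence.} For an open $V \Subset M$ with Lipschitz boundary write $c(u_t, V) := \|\dif u_t\|_{TV(V)} - \eta(u_t, V) \geq 0$, the nonnegativity being the zero-competitor bound. The first observation is the monotonicity
$$c(u_t, V) \leq c(u_t, V') \qquad \text{whenever } V \Subset V',$$
proved by extending any competitor $w$ for $\eta(u_t, V)$ (i.e.\ $w = u_t$ on $\partial V$) by $u_t$ on $V' \setminus V$: the extension has exactly the jump of $u_t$ across $\partial V$, hence costs $\|\dif w\|_{TV(V)} + \|\dif u_t\|_{TV(V')} - \|\dif u_t\|_{TV(V)}$, so $\eta(u_t, V') \leq \eta(u_t, V) + \|\dif u_t\|_{TV(V')} - \|\dif u_t\|_{TV(V)}$, which rearranges to the claim. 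On the other hand, (\ref{approximately least gradient}) together with $\eta(u_t, V) \leq \|\dif u_t\|_{TV(V)}$ forces $\liminf_t c(u_t, V) = 0$ for every such $V$. Now fix an exhaustion $U_1 \Subset U_2 \Subset \cdots$ of $M$ by Lipschitz domains and choose $t_k \to 0$ with $c(u_{t_k}, U_k) < 1/k$; by the monotonicity, $c(u_{t_k}, U_j) \to 0$ for each fixed $j$. After subtracting from each $u_t$ the constant $\dashint_{U_1} u_t$ (harmless, and unnecessary for the uniformly bounded blowups of our intended application), a chain of Poincaré inequalities bounds $\|u_{t_k}\|_{L^1(U_j)}$, so the $BV$ compactness theorem allows a further subsequence with $u_{t_k} \to u$ in $L^1_\loc(M)$ for some $u \in BV_\loc(M)$, still with $c(u_{t_k}, U_j) \to 0$ for all $j$. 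By locality of the minimization problem and a truncation, it remains only to prove, for each $j$, that $\|\dif u\|_{TV(U_j)} \leq \|\dif w\|_{TV(U_j)}$ whenever $w \in BV(M)$ agrees with $u$ outside a compact subset of $U_j$.

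\emph{Step 2: the comparison.} Fix $j$ and such a $w$; choose an open $\Omega$ with $\supp(w-u) \Subset \Omega \Subset U_j$, and for small $\lambda > 0$ set $A_\lambda := \{\, 0 < \dist(\cdot, \overline\Omega) < \lambda \,\}$ together with a Lipschitz cutoff $\phi_\lambda$ equal to $1$ on a neighborhood of $\overline\Omega$, supported in $\overline\Omega \cup A_\lambda$, with $|\dif\phi_\lambda| \lesssim 1/\lambda$; for generic $\lambda$ and $\Omega$ we may assume that $\partial\Omega$, the sphere $\{\dist(\cdot,\overline\Omega) = \lambda\}$, and the level set where $\phi_\lambda$ leaves the value $1$ carry no mass of $\dif u$, $\dif w$ or any $\dif u_{t_k}$. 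Put $w_k := u_{t_k} + \phi_\lambda(w - u_{t_k})$, so $w_k = w$ near $\overline\Omega$, $w_k = u_{t_k}$ off $\overline\Omega \cup A_\lambda$, and $w_k - u_{t_k} = \phi_\lambda(u - u_{t_k})$ on $A_\lambda$ (where $w = u$); since $w_k - u_{t_k}$ is compactly supported in $U_j$, we have $\eta(u_{t_k}, U_j) \leq \|\dif w_k\|_{TV(U_j)}$. Expanding $\dif w_k$ on $A_\lambda$ by the Leibniz rule, splitting $U_j$ into $\Omega$, $A_\lambda$ and $U_j \setminus (\overline\Omega \cup A_\lambda)$, and noting that the $\|\dif u_{t_k}\|_{TV(A_\lambda)}$ and $\|\dif u_{t_k}\|_{TV(U_j \setminus \overline{\Omega \cup A_\lambda})}$ contributions to $\|\dif w_k\|_{TV(U_j)}$ cancel against the corresponding pieces of $\|\dif u_{t_k}\|_{TV(U_j)}$, one arrives at
$$\|\dif u_{t_k}\|_{TV(\Omega)} \leq \|\dif w\|_{TV(\Omega)} + \|\dif u\|_{TV(A_\lambda)} + \tfrac{C}{\lambda}\|u - u_{t_k}\|_{L^1(A_\lambda)} + c(u_{t_k}, U_j).$$
Letting $k \to \infty$ kills the last two terms (here $c(u_{t_k}, U_j) \to 0$ and $u_{t_k} \to u$ in $L^1_\loc$ enter, and this is the \emph{only} place the latter is used), and lower semicontinuity of total variation gives $\|\dif u\|_{TV(\Omega)} \leq \|\dif w\|_{TV(\Omega)} + \|\dif u\|_{TV(A_\lambda)}$; letting $\lambda \to 0$ and using continuity of the finite measure $|\dif u|$ from above (as $A_\lambda \downarrow \emptyset$) yields $\|\dif u\|_{TV(\Omega)} \leq \|\dif w\|_{TV(\Omega)}$. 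Since $w = u$ on $U_j \setminus \Omega$ and $\partial\Omega$ is null for $\dif u$ and $\dif w$, this improves to $\|\dif u\|_{TV(U_j)} \leq \|\dif w\|_{TV(U_j)}$, as required.

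I expect the genuinely delicate point to be the bookkeeping in Step 2 rather than any single estimate: the passage from $\eta(u_{t_k}, U_j) \leq \|\dif w_k\|_{TV(U_j)}$ back to an inequality for $\|\dif u\|_{TV(\Omega)}$ must route around the facts that total variation is only lower semicontinuous under $L^1_\loc$ convergence and that the traces of $u_{t_k}$ on $\partial\Omega$ are not controlled. The cutoff $\phi_\lambda$ is exactly what converts ``trace control on $\partial\Omega$'' into ``$L^1$ smallness on the vanishing annulus $A_\lambda$,'' at the cost of the auxiliary limit $\lambda \to 0$; and the monotonicity of the defect $c(u_t, \cdot)$, together with the diagonal choice of $(t_k)$, is what upgrades the $\limsup$-form hypothesis (\ref{approximately least gradient}) into the pointwise statement $c(u_{t_k}, U_j) \to 0$ that makes the comparison go through.
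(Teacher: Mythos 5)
The paper does not give a proof of this lemma: it cites Miranda's original work \cite[Osservazione 3]{Miranda66}, remarks that the classical version for sequences of exact minimizers ``can be easily modified to hold in this setting,'' and explicitly omits the details. Your proof fills that gap with the expected argument, and I believe it is correct. Two of your devices are exactly the right tools. First, the monotonicity $c(u_t, V) \le c(u_t, V')$ of the minimality defect in the domain is what turns the $\limsup$-form hypothesis (\ref{approximately least gradient}) into the usable diagonal statement $c(u_{t_k}, U_j) \to 0$ for every fixed $j$; note that (\ref{approximately least gradient}) together with the trivial $\eta(u_t, U) \leq \|\dif u_t\|_{TV(U)}$ only gives $\liminf_t c(u_t, U) = 0$, not $\lim_t c(u_t, U) = 0$, so the diagonal-plus-monotonicity step is genuinely needed and not a cosmetic refinement. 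Second, the cutoff $\phi_\lambda$ converts the lack of trace control on $\partial\Omega$ into $L^1$-smallness on the shrinking annulus $A_\lambda$, and the bookkeeping in Step~2 (including the cancellation of the $\|\dif u_{t_k}\|_{TV(A_\lambda)}$ and $\|\dif u_{t_k}\|_{TV(U_j \setminus (\overline\Omega \cup A_\lambda))}$ terms) is done correctly. Only minor hygiene items remain implicit: the ``chain of Poincar\'e inequalities'' needs $M$ connected to compare the subtracted means across the exhaustion (harmless here, and indeed unnecessary for the uniformly bounded blowups that are the paper's only use of the lemma); and the generic choice of $\lambda$ and $\Omega$ so that the relevant hypersurfaces are null for $\dif u$, $\dif w$, and every $\dif u_{t_k}$ uses that you have reduced to a countable family. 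These are routine. In short, this is a correct and complete proof of a result the paper states without proof, and it is the argument the paper's citation points toward.
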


\begin{corollary}\label{blowup theorem}
Suppose that $U$ is an open set with least perimeter in $B(P, r)$, $P \in \partial^* U$, and $u = 1_U$.
Then the blowup $(u_t)$ of $u$ converges as $t \to 0$ along a subsequence weakly in $BV$ to the indicator function of a set $C \subset T_PM$ such that $\partial C$ is a minimal cone containing $0$.
If $d \leq 7$, then $\partial C$ is a hyperplane.
\end{corollary}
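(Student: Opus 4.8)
The plan is to carry out the blowup argument of \cite[Chapters 5 and 10]{Giusti77}, using the Miranda stability theorem (Lemma \ref{Miranda convergence}) to extract a limit, the monotonicity formula (Proposition \ref{Monotone}) to see that the limit is a cone, and the classification of low-dimensional minimal cones to see that it is a hyperplane; the only real work lies in tracking the curvature corrections. First I would replace $M$ by $B(P,r)$ and work in normal coordinates at $P$, identifying $T_PM$ with $\RR^d$ carrying the flat metric $g|_P$; then the exponential map distorts lengths, volumes, and perimeters at radius $s$ only by a factor $1+O(s^2)$.

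To produce the limit set: since $U$ has least perimeter in $B(P,r)$, $u=1_U$ has least gradient, so as recorded before Lemma \ref{Miranda convergence} the blowup net $(u_t)$ satisfies (\ref{approximately least gradient}) for every open $V \Subset T_PM$ with Lipschitz boundary. By Lemma \ref{Miranda convergence}, a subsequence $u_{t_k}$ converges weakly in $BV_\loc(T_PM)$, hence in $L^1_\loc$ and, along a further subsequence, pointwise a.e., to a function $v$ of least gradient on $\RR^d$; since the $u_{t_k}$ are $\{0,1\}$-valued, $v=1_C$ with $C$ a set of least perimeter in $\RR^d$. Because $P\in\partial^*U\subseteq\partial U$ is a point of neither density $0$ nor density $1$ for $U$, the $L^1_\loc$ convergence and the volume distortion estimate show the same of $0$ for $C$, so $C$ and its complement are nonempty and $\partial C\neq\emptyset$.

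Next I would show $C$ is a cone with vertex $0$. By the scaling of perimeter and the metric distortion, $\|\dif u_t\|_{TV(B(0,\rho))}=(1+O((t\rho)^2))\,t^{1-d}\mu(B(P,t\rho))$ with $\mu=\star|\dif u|$, and (\ref{weak monotonicity}) guarantees that $\Theta:=\lim_{s\to0}e^{As^2}s^{1-d}\mu(B(P,s))$ exists, whence $\lim_k\|\dif u_{t_k}\|_{TV(B(0,\rho))}=\rho^{d-1}\Theta$ for every $\rho>0$. Lower semicontinuity gives $\|\dif1_C\|_{TV(B(0,\rho))}\le\rho^{d-1}\Theta$; for the reverse I would invoke the no-loss-of-perimeter principle for (almost-)minimizers, proved by splicing $C$ inside $B(0,\rho)$ to $C_{t_k}$ outside a slightly larger ball along a well-chosen level set, the annular error vanishing by the coarea formula (Proposition \ref{Coarea2}) and $L^1_\loc$ convergence, and the almost-minimality error in (\ref{approximately least gradient}) tending to $0$. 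Hence $\rho^{1-d}\|\dif1_C\|_{TV(B(0,\rho))}\equiv\Theta$ for a.e.\ $\rho$, and by the monotonicity of this ratio (Proposition \ref{Monotone} with $A=0$, valid for the flat metric) for all $\rho>0$. Feeding this constancy into Lemma \ref{monotonicity lemma}, applied to $1_C$ with $A=0$ and $E\equiv0$ (extended from $C^1$ functions as in the proof of Proposition \ref{Monotone}), the right-hand side of (\ref{monotonicity lemma eqn}) vanishes, so $(\partial_r1_C)^2/|\dif1_C|=0$ as a measure off the origin, i.e.\ the conormal $\normal_C$ is $\star|\dif1_C|$-a.e.\ tangent to the spheres about $0$. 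As in \cite[Chapter 5]{Giusti77} this forces $C$ to be invariant under dilations about $0$, so $\partial C$ is a minimal cone, and being a nonempty closed cone it contains $0$.

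Finally, when $d\le7$ I would cite the classification of area-minimizing cones: the only set of least perimeter in $\RR^d$, $d\le7$, whose boundary is a cone is a half-space bounded by a hyperplane through the origin (Simons' theorem; \cite[Chapter 10]{Giusti77}), so $\partial C$ is a hyperplane. I expect the main obstacle to be the middle step: in contrast to the flat case, $(u_t)$ is only approximately minimizing and the rescaled metrics only converge to the flat metric, so one must check carefully both that the limit $1_C$ is a genuine minimizer with no loss of perimeter and that the curvature corrections to the rescaled perimeters and volumes disappear in the limit, so that the density $\Theta$ is transferred exactly and the monotonicity formula applies on the nose to $1_C$.
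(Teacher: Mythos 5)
Your proposal is correct and follows essentially the same route as the paper: Miranda stability (Lemma \ref{Miranda convergence}) to extract a limit set $C$ of least perimeter in $T_PM$, the monotonicity formula to show the density ratio is constant and hence $\partial C$ is a cone through $0$, and the Simons classification for $d\le7$. The paper simply cites \cite[Theorem 9.3]{Giusti77} for the cone step with the remark that the proof carries over; you have unpacked that step (lower semicontinuity plus no-loss-of-perimeter via splicing, constancy of the density, feeding it into Lemma \ref{monotonicity lemma} with $A=0$) and correctly flagged the two places where the curved setting requires care, namely that $(u_t)$ is only approximately minimizing and that the rescaled metrics only converge to the flat metric.
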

\begin{proof}
By the Miranda stability theorem applied on the manifold $T_PM$, $u_t \to 1_C$ weakly in $BV$, where $C$ has least perimeter.
It is a standard consequence of (\ref{weak monotonicity}) that, since $(u_t)$ is a blowup, $\partial C$ is a minimal cone containing $0$.
See \cite[Theorem 9.3]{Giusti77} for the euclidean case; the proof is identical here.
If $d \leq 7$, it follows that $\partial C$ is a hyperplane \cite[Theorem 9.10 and Theorem 10.10]{Giusti77}.
\end{proof}

We now estimate the perimeter of a set of least perimeter, generalizing \cite[Remark 5.13]{Giusti77}.
As a consequence, if $u = 1_U$ where $U$ has least perimeter, then the error term in the monotonicity formula is of size $O(r_2^{d + 1})$.

\begin{corollary}\label{doubling dimension}
There exists $A \geq 0$ such that for every set $U$ of least perimeter in a ball $B_r = B(P, r)$, with $P \in \partial^* U$, and $r > 0$ small,
$$|\Ball^{d - 1}|e^{-Ar^2}r^{d - 1} \leq |\partial^*U \cap B_r| \leq |\Sph^{d - 1}|e^{Ar^2} r^{d - 1}.$$
\end{corollary}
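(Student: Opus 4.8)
The plan is to prove the two inequalities by separate, standard mechanisms, following \cite[Remark 5.13]{Giusti77} and inserting the curvature corrections where they appear. Throughout, write $\mu := \star|\dif 1_U|$ for the perimeter measure, so that $\mu(B_r) = |\partial^* U \cap B_r|$; since $P \in \partial^* U$ we have $1_U \in BV_\loc(M)$, and $\mu(B_r) < \infty$ for $r$ small because $\overline{B_r}$ is then compact in $M$. The upper bound is a perimeter comparison and uses nothing about the point $P$; the lower bound is the sharp density estimate, and it is there that the constant $|\Ball^{d-1}|$ and the hypothesis $d \leq 7$ enter.

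For the upper bound, first I would fix a ``good'' radius $\rho \in (0,r)$ — one of the a.e.-many radii for which $\mu(\partial B_\rho) = 0$ and the interior and exterior traces of $1_U$ on $\partial B_\rho$ agree $\mathcal H^{d-1}$-a.e. — and introduce the competitor equal to $1_U$ on $B_r \setminus B_\rho$ and to $0$ on $B_\rho$. This competitor has the same trace as $1_U$ on $\partial(B_r)$, and its total variation in $B_r$ is at most $\mu(B_r \setminus \overline{B_\rho}) + \mathcal H^{d-1}(\partial B_\rho)$. Since $U$ has least perimeter in $B_r$ and $\mu(B_r) \geq \mu(B_r \setminus \overline{B_\rho}) + \mu(B_\rho)$, cancelling the (finite) common term forces $\mu(B_\rho) \leq \mathcal H^{d-1}(\partial B_\rho)$, which by (\ref{geodesic spheres area form}) equals $|\Sph^{d-1}|\rho^{d-1} + O(\rho^{d+1})$. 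Letting $\rho \uparrow r$, so that $\mu(B_\rho) \uparrow \mu(B_r)$, and choosing $A$ large enough that $1 + O(r^2) \leq e^{Ar^2}$ for small $r$, yields $|\partial^* U \cap B_r| \leq |\Sph^{d-1}|e^{Ar^2} r^{d-1}$.

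For the lower bound I would invoke the monotonicity formula (\ref{weak monotonicity}): the function $\rho \mapsto e^{A\rho^2}\rho^{1-d}\mu(B_\rho)$ is nondecreasing on $(0, r_*]$, so for small $r$ we get $e^{Ar^2} r^{1-d}\mu(B_r) \geq \lim_{\rho \to 0^+} \rho^{1-d}\mu(B_\rho)$, and it remains to show this density limit is $\geq |\Ball^{d-1}|$. By Corollary \ref{blowup theorem}, the blowup $1_U \circ \exp_P(t\,\cdot)$ converges along a subsequence $t_k \to 0$ weakly in $BV_\loc(T_PM)$ to $1_C$ with $\partial C$ a hyperplane through $0$ (this is where $d \leq 7$ is used). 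A scaling computation — using that $\exp_P(t\,\cdot)$ pulls $g$ back to $t^2$ times a metric converging to the flat metric $g_P$, together with the bounded eccentricity of the associated balls — identifies $t_k^{1-d}\mu(B(P,t_k))$ with the $g_P$-perimeter of $1_U \circ \exp_P(t_k\,\cdot)$ in the $g_P$-unit ball, up to a factor $1 + o(1)$; lower semicontinuity of perimeter then gives $\liminf_k t_k^{1-d}\mu(B(P,t_k)) \geq |\partial C \cap \Ball^d| = |\Ball^{d-1}|$. Plugging in, $|\partial^* U \cap B_r| = \mu(B_r) \geq |\Ball^{d-1}|e^{-Ar^2}r^{d-1}$, and enlarging $A$ if necessary makes both bounds hold with a single constant. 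The one genuinely delicate point — the main obstacle — is this last limit: one must check that the blowup loses no perimeter mass, so that the limiting density is at least $|\Ball^{d-1}|$ rather than merely positive; the bookkeeping with the rescaled metrics is where curvature could in principle interfere, but the corrections involved are $O(r^2)$ and are absorbed into the constant $A$ already produced by the monotonicity formula.
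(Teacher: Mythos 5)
Your proof follows the same architecture as the paper's — a competitor bound for the upper estimate, and monotonicity plus blowup for the lower — and both halves are substantively correct. The place you diverge is that you invoke $d \leq 7$, via the hyperplane conclusion of Corollary~\ref{blowup theorem}, to conclude that the tangent cone is a half-space and hence has density exactly $|\Ball^{d-1}|$. But the corollary as stated carries no dimension hypothesis, and the paper's proof deliberately avoids it: it only uses that $\partial C$ is a minimal \emph{cone} through the origin (which holds in every dimension) and then cites \cite[(5.16)]{Giusti77}, which asserts that any such cone has perimeter at least $|\Ball^{d-1}|$ in the unit ball. That general fact is exactly what you need, and using it keeps the density estimate available to the later, dimension-free lemmata — the de Giorgi lemma and the mollification machinery — that apply it without assuming $d\leq 7$. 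A smaller note: your closing worry about ``losing perimeter mass'' in the blowup is aimed in the wrong direction; for the \emph{lower} bound, the lower semicontinuity of perimeter under weak $BV$ convergence (which you already invoke) is sufficient, and the existence of the full limit is supplied by the monotonicity formula, so no mass-conservation argument is needed here.
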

\begin{proof}
The upper bound on $|\partial^* U \cap B_r|$ is immediate from (\ref{a priori estimate 2}) and (\ref{geodesic spheres area form}).
The lower bound is obtained from (\ref{weak monotonicity}), which implies that
$$\limsup_{\rho \to 0} e^{-A\rho^2} \rho^{1 - d} |\partial^* U \cap B_\rho| \leq |\partial^* U \cap B_r|.$$
To control the left-hand side we take a blowup $(u_\rho)$ of $1_U$.
By Corollary \ref{blowup theorem} we can pass to a subsequence so that $u_\rho \to 1_C$ for $C$ a half-space, which in particular is transverse to $B'_1$, where the prime denotes the euclidean metric on the tangent space.
Then
\begin{align*}
\limsup_{\rho \to 0} e^{-A\rho^2} \rho^{1 - d} |\partial^* U \cap B_\rho| &= \lim_{\rho \to 0} e^{O(\rho^2)} \int_{B'_1} \star'|\dif u_\rho|' = \int_{B'_1} \star'|\dif 1_C|.
\end{align*}
This last term is $|\partial C \cap B'_1|$, the measure of the intersection of the euclidean unit ball with a minimal cone through its origin.
By \cite[(5.16)]{Giusti77}, any minimal cone in $B'_1$ has measure at least $|\Ball^{d - 1}|$, the area of the unit ball of $\RR^{d - 1}$.
\end{proof}

\section{De Giorgi lemma}\label{DeGiorgiSec}
We now introduce the quantity which governs the rate of convergence of the Lebesgue differentiation theorem for $\normal_U$, whenever $U$ is a set of locally finite perimeter.
More precisely, let $\mu$ be the surface measure of $\partial^* U$, which is by definition the total variation measure of $1_U$.
We study the convergence of the approximation
$$\normal_U(P, r) := \avg_{B(P, r), P, \mu} \normal_U.$$

\begin{definition}
The \dfn{excess} of a set $U \subset M$ of locally finite perimeter at $P \in \partial U$, such that $\partial^* U$ has surface measure $\mu$, in an open set $A \ni P$ with Lipschitz boundary is
$$\Exc_A(U, P) := \mu(U)\left(1 - \left|\avg_{A, P, \mu} \normal_U\right|\right).$$
For $\rho > 0$ we write $\Exc_\rho(U, P) := \Exc_{B(P, \rho)}(U, P)$.
\end{definition}

Since parallel transport preserves length, we have
$$|K(P, Q) \normal_U(Q)| = 1$$
and, averaging $K(P, Q) \normal_U(Q)$ in $Q$, we conclude
\begin{equation}\label{normal isnt too big}
|\normal_U(P, r)| \leq 1.
\end{equation}
From this we obtain the following monotonicity property: if $P \in A'$ and $A' \subseteq A$, then
\begin{equation}\label{approximate monotone}
0 \leq \Exc_{A'}(U, P) \leq \Exc_A(U, P).
\end{equation}
Moreover, by Proposition \ref{translation invariance}, if $P, Q \in A$ then 
\begin{equation}\label{translation invariant excess}
|\Exc_A(U, P) - \Exc_A(U, Q)| \lesssim (\diam A)^2 |\partial^* U \cap A|.
\end{equation}

Following \cite{Miranda66,Giusti77,deGiorgi61}, we proceed by controlling the excess using the following de Giorgi lemma, which is the Riemannian generalization of \cite[Theorem 8.1]{Giusti77}:

\begin{proposition}[de Giorgi lemma]\label{de Giorgi}
There exist constants $C, c, \rho_* > 0$ which only depend on $M$, such that for every $P \in M$, every $\rho$ such that $0 < \rho < \rho_*$, and every set $U \subset M$ of least perimeter such that
\begin{equation}\label{base case}
\Exc_\rho(U, P) \leq c\rho^{d - 1},
\end{equation}
we have
\begin{equation}\label{dGL concl}
\Exc_{\rho/2}(U, P) \leq 2^{-d} \Exc_\rho(U, P) + C\rho^{d + 1}.
\end{equation}
\end{proposition}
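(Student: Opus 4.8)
The plan is to follow the structure of Giusti's proof of \cite[Theorem 8.1]{Giusti77}, with the essential modifications coming from the parallel propagator estimates established in Section \ref{Prelims} and the monotonicity formula error terms controlled in Section \ref{MollifierSection}. The argument is by contradiction and compactness: suppose the conclusion fails. Then there is a sequence of points $P_k \in M$, radii $\rho_k \to 0$ (passing to a subsequence using that $M$ is locally compact, we may also assume $P_k \to P_\infty$), and sets $U_k$ of least perimeter with $\Exc_{\rho_k}(U_k, P_k) \leq c_k \rho_k^{d-1}$ for $c_k \to 0$, yet $\Exc_{\rho_k/2}(U_k, P_k) > 2^{-d}\Exc_{\rho_k}(U_k, P_k) + C\rho_k^{d+1}$. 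Normalizing by setting $\varepsilon_k^2 := \rho_k^{1-d}\Exc_{\rho_k}(U_k, P_k) \to 0$, we rescale: work in normal coordinates at $P_k$, dilate by $\rho_k$, and consider the blowup functions $v_k$ obtained from $1_{U_k}$. Because $\varepsilon_k \to 0$, the excess forces $\normal_{U_k}$ to concentrate near a single direction (after rotating coordinates so this is $\dif x^0$); one shows that the rescaled, recentered boundaries $\partial^* U_k$, suitably normalized by $\varepsilon_k$, converge in $L^1_{\loc}$ to the graph of a function $w$ on $\Ball^{d-1}$.

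The heart of the matter is then to identify $w$ as harmonic and to push the convergence to $C^1$. For this I would use the monotonicity formula of Proposition \ref{Monotone}: since $U_k$ has least perimeter, Corollary \ref{doubling dimension} gives that the error term $O(r_2^{d+1})$ in the monotonicity formula is genuinely of that order, which after rescaling by $\rho_k$ becomes negligible relative to $\varepsilon_k^2$. The point is that the excess controls $\int r^{1-d}(\partial_r u)^2/|\dif u|$ via Lemma \ref{monotonicity lemma}, and in the blowup limit this becomes an $L^2$ Dirichlet-type bound on $w$. The minimal surface equation \eqref{euclidean MSE}, linearized at the trivial solution in \emph{normal} coordinates at $P_k$ — and this is precisely why normal coordinates are forced on us, as emphasized in the introduction — yields the Laplace equation for the limit $w$, up to curvature corrections of size $O(\rho_k)$ which vanish. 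Thus $w$ is harmonic on $\Ball^{d-1}$, and by interior estimates for harmonic functions, expanding $w$ in orthogonal harmonic polynomials and discarding the constant and linear parts (which are killed by the recentering and the subtraction of $\avg \normal_U$), we get the decay $\int_{\Ball^{d-1}_{1/2}} |\nabla(w - \ell)|^2 \leq 2^{-d-2}\int_{\Ball^{d-1}} |\nabla w|^2$ for the best affine approximation $\ell$. Translating this decay back through the parallel propagator — here Proposition \ref{translation invariance} guarantees that changing the basepoint from $P_k$ to the recentered point costs only $O(\rho_k^2)$, absorbed into the $C\rho^{d+1}$ term — contradicts the assumed failure of \eqref{dGL concl} for $k$ large.

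I would organize the write-up in the following steps. First, the reduction to $C^1$ (or approximately-$C^1$) graphs: using a mollification argument as sketched in the introduction, reduce to the case where $\partial^* U \cap B(P,\rho)$ is (after rescaling to unit size) a graph $x^0 = u(x')$ with $\nabla u$ small in $C^0$, noting the footnote that $u$ solves the minimal surface equation only approximately. Second, the Caccioppoli-type inequality relating $\int |\nabla u|^2$ to the excess, essentially \cite[Lemma 8.2 or its analog]{Giusti77}, adapted by tracking metric factors $\sqrt{\det g} = 1 + O(\rho^2)$ from \eqref{geodesic spheres area form}. Third, the compactness and harmonicity of the limit, using Lemma \ref{Miranda convergence} and the monotonicity formula. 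Fourth, the harmonic approximation/decay estimate, which is pure Euclidean harmonic analysis. Fifth, transferring the decay back through $K(P,Q)$ and closing the contradiction, keeping careful track that all curvature errors are $O(\rho^{d+1})$ after undoing the rescaling.

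The main obstacle I anticipate is Step 3, and within it the interaction between the blowup limit and the metric. In the flat case one has exactly least gradient and exactly the minimal surface equation; here the blowup $u_t$ is only \emph{approximately} of least gradient, satisfying only \eqref{approximately least gradient}, and the mollification operator does not commute with the minimal surface operator. One must argue that these defects are lower order: the curvature corrections to the minimal surface equation in normal coordinates at $P_k$ are $O(\rho_k)$ pointwise and hence $O(\varepsilon_k^{-1}\rho_k) \cdot \varepsilon_k$ after rescaling, so provided $\rho_k = o(\varepsilon_k)$ — which is not automatic and must be arranged, or else handled by the $C\rho^{d+1}$ slack term — they disappear in the limit. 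Making this bookkeeping precise, i.e.\ verifying that the limit $w$ is honestly harmonic and not merely harmonic-up-to-an-error that could spoil the decay constant $2^{-d}$, is the delicate point; everything else is a careful but routine transcription of Giusti's Chapters 5--8 with metric factors inserted.
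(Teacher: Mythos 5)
Your sketch is essentially correct and would lead to a valid proof, but it distributes the work differently from the paper. The paper splits the de Giorgi lemma into a $C^1$ graphical case (Lemma \ref{Miranda44}, itself resting on Lemma \ref{Miranda43}) and a mollification step (Lemma \ref{single mollify}). In the $C^1$ case the Riemannian area integrand is approximated by the euclidean one (equation \eqref{approximate by euclidean lagrangian}) up to errors $O(\rho^2)$, after which the decay is obtained by citing Giusti's euclidean comparison lemma \cite[Lemma 6.2]{Giusti77} as a black box; this is a \emph{direct} comparison to a harmonic competitor, not a blowup. Compactness-and-contradiction appears only in Lemma \ref{single mollify}, and only to produce a good $C^1$ mollified set, not to extract a harmonic limit. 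You instead propose to run a single global compactness argument, blow up the contradiction sequence, identify the limit profile $w$ as harmonic via the linearized minimal surface equation, and import the decay from the orthogonal expansion of $w$ in harmonic polynomials. This is the harmonic-approximation-lemma route (Simon, Schoen--Simon). Both work; the paper's organization avoids re-proving the euclidean harmonic comparison by deferring to Giusti, while yours is more self-contained but obliges you to establish strong $L^2$ convergence of the rescaled gradients $\varepsilon_k^{-1}\nabla u_k$ before the harmonic decay transfers --- your Caccioppoli step gestures at this but the gluing/cutoff argument that upgrades weak to strong convergence is the genuinely nontrivial ingredient and should be stated as such. Two smaller remarks. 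First, your opening paragraph sets up the contradiction over raw sets $U_k$ of least perimeter, but $\partial^* U_k$ is not a graph; the mollification of Step~1 must therefore be applied \emph{inside} the contradiction sequence, at which point you have effectively reconstituted the paper's two-lemma decomposition. Second, your worry that $\rho_k = o(\varepsilon_k)$ ``is not automatic'' is indeed dispatched by the $C\rho^{d+1}$ slack: if $\varepsilon_k \lesssim \rho_k$ then $\Exc_{\rho_k}(U_k, P_k) \lesssim \rho_k^{d+1}$, so \eqref{dGL concl} follows trivially from \eqref{approximate monotone}, and you may restrict attention to a subsequence with $\rho_k = o(\varepsilon_k)$.
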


\begin{corollary}\label{DGL base case}
Let $d \leq 7$.
Assume the de Giorgi lemma, and let $U \subset M$ have least perimeter.
Then there exists $\rho = \rho(P) < \rho_*$, which is locally uniformly positive, such that (\ref{base case}) holds.
\end{corollary}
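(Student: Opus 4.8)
The plan is to verify the base case hypothesis \eqref{base case} of the de Giorgi lemma at every point $P \in \partial U$, and then observe that it holds trivially (for $\rho$ small) at points $P \notin \partial U$, since near such a point the density of $U$ is $0$ or $1$ and so $\mu$ vanishes on a small ball. Thus the real content is at $P \in \partial^* U$ (recall $\partial^* U$ is dense in $\partial U$ by Proposition \ref{locality of Caccioppoli}, and we handle $\partial U \setminus \partial^* U$ afterward by the translation-invariance estimate \eqref{translation invariant excess}).

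First I would fix $P \in \partial^* U$ and use Corollary \ref{blowup theorem}: since $d \leq 7$, the blowup $(u_t)$ of $1_U$ at $P$ converges along a subsequence, weakly in $BV$, to the indicator of a half-space $C \subset T_P M$, with $\partial C$ a hyperplane through the origin. On the tangent space equipped with its flat metric, the excess of a half-space in the unit ball is exactly $0$: $\avg_{B'_1, 0, \mu'} \normal_C$ has length $1$ because $\normal_C$ is a constant $1$-form and parallel transport in flat space is trivial. Hence $\Exc_{B'_1}(C, 0) = 0$. Next I would transfer this back to small balls around $P$: writing the excess at scale $\rho$ in normal coordinates based at $P$, one has
\begin{align*}
\rho^{1-d}\Exc_\rho(U, P)
&= \rho^{1-d}\mu(B(P,\rho))\bigl(1 - |\avg_{B(P,\rho),P,\mu}\normal_U|\bigr)\\
&= e^{O(\rho^2)}\Bigl(\int_{B'_1}\star'|\dif u_\rho|' - \Bigl|\int_{B'_1} K(P,\exp_P(\rho\,\cdot))\,\normal_U\,\dif\mu_\rho\Bigr|\Bigr),
\end{align*}
where $\mu_\rho$ is the rescaled surface measure; since the Christoffel symbols in normal coordinates are $O(\rho)$ on $B(P,\rho)$, the propagator $K$ differs from the identity by $O(\rho)$ (by \eqref{path ordered exponential taylor series}), and $\int_{B'_1}\star'|\dif u_\rho|'$ is bounded by Corollary \ref{doubling dimension}. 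Taking $\rho \to 0$ along the blowup subsequence and using weak $BV$ convergence together with lower semicontinuity of total variation, the right-hand side converges to $\Exc_{B'_1}(C,0) = 0$. Therefore $\limsup_{\rho\to 0}\rho^{1-d}\Exc_\rho(U,P) = 0$, so for every fixed $c > 0$ there is a $\rho(P) < \rho_*$ with $\Exc_{\rho(P)}(U,P) \leq c\,\rho(P)^{d-1}$.

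It remains to make $\rho(P)$ locally uniformly positive, including across $\partial U \setminus \partial^* U$ and the complement of $\partial U$. For this I would argue by contradiction: if $\rho(P)$ failed to be locally uniformly positive, there would be $P_n \to P_\infty$ in a fixed compact set with $\rho(P_n) \to 0$, i.e. $\rho^{1-d}\Exc_\rho(U,P_n) > c$ for all $\rho \in (\rho(P_n), \rho_*)$. But by the translation-invariance estimate \eqref{translation invariant excess}, $|\Exc_{B(P_n,\rho)}(U,P_n) - \Exc_{B(P_n,\rho)}(U,P_\infty)| \lesssim \rho^2|\partial^* U \cap B(P_n,\rho)| \lesssim \rho^{d+1}$ by Corollary \ref{doubling dimension}, so this would force $\Exc$ at $P_\infty$ to also be $\gtrsim c\rho^{d-1}$ down to scale $0$, contradicting the previous paragraph when $P_\infty \in \partial^* U$, and contradicting the vanishing of $\mu$ near $P_\infty$ when $P_\infty \notin \overline{\partial^* U} = \partial U$; the case $P_\infty \in \partial U \setminus \partial^* U$ is covered because $\partial^* U$ is dense in $\partial U$, so we may pick a nearby $Q \in \partial^* U$ and again use \eqref{translation invariant excess}.

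The main obstacle I anticipate is the transfer step in the second paragraph: making rigorous that $\rho^{1-d}\Exc_\rho(U,P) \to \Exc_{B'_1}(C,0)$ along the blowup subsequence. This requires care because the excess involves both the perimeter term (lower semicontinuous under weak $BV$ convergence, so only an inequality in one direction) and the average term (continuous, since it is a bounded linear functional against the measures $\mu_\rho$ which converge weakly-$*$); one must check that weak-$*$ convergence of $\mu_\rho$ to the flat surface measure of $\partial C$ holds in the relevant sense, that no mass escapes to $\partial B'_1$ (using that $\partial C$ is transverse to $\partial B'_1$, as noted in the proof of Corollary \ref{doubling dimension}), and that the propagator corrections are genuinely $O(\rho)$ uniformly. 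Once one has $\liminf \geq \Exc_{B'_1}(C,0) = 0$ from lower semicontinuity of the perimeter and continuity of the average, combined with $\Exc \geq 0$ from \eqref{approximate monotone}, the $\limsup$ is pinned to $0$ and the conclusion follows.
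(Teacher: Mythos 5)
Your overall route is the same as the paper's: at a point $Q \in \partial^* U$ the blowup is a half-space (Corollary \ref{blowup theorem}, $d \leq 7$), hence $\Exc_r(U, Q) = o(r^{d-1})$, and the smallness is then propagated to nearby basepoints using the domain monotonicity \eqref{approximate monotone}, the basepoint estimate \eqref{translation invariant excess}, and the perimeter bound of Corollary \ref{doubling dimension}; the paper does this directly (the nesting $B(P, r/4) \subseteq B(Q, r/2)$ gives one uniform radius $\rho = r/4$ on a whole neighborhood of $Q$, and $\partial^* U$ is dense in $\partial U$), whereas you argue by compactness and contradiction, but that is only a difference of packaging.

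The genuine gap is in the step you yourself flag as the main obstacle, namely the claim $\limsup_{\rho \to 0} \rho^{1-d}\Exc_\rho(U, P) = 0$ for $P \in \partial^* U$. Your closing inference --- that $\liminf \geq \Exc_{B_1'}(C, 0) = 0$ from lower semicontinuity, ``combined with $\Exc \geq 0$, pins the $\limsup$ to $0$'' --- is not valid: lower semicontinuity of the total variation bounds the perimeter term from below only, which yields $\liminf_{\rho \to 0} \rho^{1-d}\Exc_\rho \geq 0$, and that is the same information as $\Exc \geq 0$; it gives no control on the $\limsup$. What you actually need is the reverse inequality $\limsup_{\rho \to 0} \rho^{1-d}\mu(B(P, \rho)) \leq |\partial C \cap B_1'| = |\Ball^{d-1}|$, i.e.\ convergence (not mere lower semicontinuity) of the blowup perimeters, and this is exactly where the minimality of $U$ must enter, via a comparison argument or the area-convergence statement for minimizers that the paper already uses when it writes $\lim_{\rho \to 0} \int_{B_1'} \star' |\dif u_\rho|' = \int_{B_1'} \star' |\dif 1_C|$ in the proof of Corollary \ref{doubling dimension}. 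Once that ingredient is supplied, your transfer step closes. Two smaller repairs: in your contradiction argument, when $P_\infty \in \partial U \setminus \partial^* U$, the estimate \eqref{translation invariant excess} alone cannot compare excesses over balls with different centers --- you also need \eqref{approximate monotone} for the nesting $B(P_\infty, r) \subseteq B(Q, r + \delta)$ --- and you should exploit that the lower bound you derive at $P_\infty$ holds at every scale up to $\rho_*$, so a single good scale at a nearby $Q \in \partial^* U$, with $\delta$ small relative to that scale, already produces the contradiction.
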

\begin{proof}
We follow \cite[pg109]{Giusti77}.
Let $Q \in \partial^* U$; we shall choose $\rho$ uniformly in a small neighborhood of $Q$, which is enough since $\partial^* U$ is dense in $\partial U$.
Since $\partial U$ has a tangent space at $Q$ by Corollary \ref{blowup theorem} and the fact that $d \leq 7$, $\Exc_r(U, Q) \ll r^{d - 1}$, thus we can choose $r \in (0, \rho_*)$ such that $\Exc_r(U, Q) \leq c(r/2)^{d - 1}$.
For $P \in B(Q, r/4)$ we have $B(P, r/4) \subseteq B(Q, r/2)$ and hence by the de Giorgi lemma, for $\rho := r/4$, (\ref{base case}) holds.
\end{proof}

\begin{proof}[Proof of Theorem \ref{main thm}, assuming the de Giorgi lemma]
We shall use the de Giorgi lemma inductively as in \cite[Theorem 8.2]{Giusti77}.
Let $P \in M$, $\rho$ be given by Corollary \ref{DGL base case}, $r := \rho/2^n$ for some $n \in \NN$, and $s \in (r/2, r)$.
Let
$\xi := \normal_U(P, r)$, $\eta = \normal_U(P, s)$, $m := |\partial^* U \cap B(P, s)|$, $M := |\partial^* U \cap B(P, r)|$, and $\gamma_n := \Exc_{\rho/2^n}(U, P)$.
Then $|\xi|^2 \leq 1$ and $|\eta|^2 \leq 1$, so
$$|\xi - \eta|^2 = |\xi|^2 + |\eta|^2 - 2 g^{-1}(\xi, \eta) \leq 2(1 - g^{-1}(\xi, \eta)).$$
To estimate the right-hand side, let $\mu$ be the surface measure on $\partial^* U$. Then
$$m(1 - g^{-1}(\xi, \eta)) = \int_{B(P, s)} (1 - g^{-1}(\xi, K(P, Q) \normal_U(Q))) \dif \mu(Q) =: I.$$
By the Cauchy-Schwarz inequality, (\ref{normal isnt too big}), and the fact that $K(P, Q)$ preserves length,
$$g^{-1}(\xi, K(P, Q) \normal_U(Q)) \leq 1,$$
which implies that, since $s \leq r$,
\begin{align*}
I
&\leq \int_{B(P, r)} (1 - g^{-1}(\xi, K(P, Q) \normal_U(Q))) \dif \mu(Q) 
\leq M(1 - |\xi|^2) \leq 2M(1 - |\xi|).
\end{align*}
But $M(1 - |\xi|)$ is exactly the definition of $\Exc_r(U, P)$, so putting everything together and using Corollary \ref{doubling dimension} to bound $m \gtrsim s^{d - 1} \gtrsim r^{d - 1}$, we have the bound
\begin{equation}\label{just need the excess}
|\xi - \eta|^2 \lesssim r^{1 - d} \Exc_r(U, P) = r^{1 - d} \gamma_n.
\end{equation}

By the de Giorgi lemma, there exists $C > 0$ such that
\begin{equation}\label{induction on gamma}
\gamma_n \leq \frac{\gamma_0}{2^{nd}} + \sum_{k=0}^n \frac{C\rho^{d + 1}}{2^{k(d + 1) + (n - k)d}} \leq \frac{\gamma_0 + C\rho^{d + 1}}{2^{nd}}.
\end{equation}
Indeed, by induction,
\begin{align*}
\gamma_{n + 1}
&\leq \frac{\gamma_0}{2^{(n + 1)d}} + 2^{-d} \sum_{k=0}^n \frac{C\rho^{d + 1}}{2^{k(d + 1) + (n - k)d}} + \frac{C\rho^{d + 1}}{2^{(n + 1)(d + 1)}} \\
&= \frac{\gamma_0}{2^{(n + 1)d}} + \sum_{k=0}^{n + 1} \frac{C\rho^{d + 1}}{2^{k(d + 1) + (n + 1 - k)d}}
\end{align*}
and (\ref{induction on gamma}) follows by summing the geometric series.
By (\ref{just need the excess}), (\ref{induction on gamma}), and the fact that $\rho$ is locally uniformly positive, it follows that along a subsequence, $\normal_U(\cdot, r)$ converges locally uniformly to $\normal_U$.
Therefore $\normal_U$ is continuous, so by Proposition \ref{locality of Caccioppoli}, $\partial U$ is a $C^1$ embedded hypersurface.
By elliptic regularity, $\partial U$ is smooth.

To prove the stability, let $(N_t)$ be a normal variation of $\partial U$ with compact support in the interior.
Then for $t$ small, $N_t$ bounds an open set $U_t$, and by the convexity (\ref{least perimeter dfn}) of the total variation,
$$|N_t| = \|\dif 1_{U_t}\|_{TV} \geq \|\dif 1_U\|_{TV} = |N_0|.$$
The stability then follows from the Taylor expansion of $|N_t|$ about $t = 0$.
\end{proof}

We break the proof of the the de Giorgi lemma into two steps, which correspond to \cite[Lemma 6.4]{Giusti77} (the $C^1$ case) and \cite[Lemma 7.5]{Giusti77} (reduction to the $C^1$ case).
Let $c_0 = c_0(M) > 0$ be a small constant to be determined later, and let $\alpha := 1/(2(1 - c_0))$.
We say that a vector field $X$ is $P$-\dfn{aligned} if there exists a normal coordinate frame $(\partial_\mu)$ based at $P$ such that $X = \partial_0$.

\begin{lemma}[de Giorgi lemma, $C^1$ case]\label{Miranda44}
Suppose that $c_0$ is small enough depending on $M$.
Then there exists a constant $c_1 = c_1(c_0, M)$, such that the following holds.
For every $\rho, \beta > 0$ small enough depending on $c_0$, and every set $U$ with $C^1$ boundary in $B(P, \rho)$, suppose that there exists a $P$-aligned vector field $X$ such that on $B(P, \rho)$,
\begin{align}
\Exc_\rho(U, P) &\leq \beta, \label{Miranda44 induction hyp}\\
|\partial^* U \cap B(P, \rho)| &\leq \eta(U, B(P, \rho)) + c_1 \beta, \label{Miranda44 minimality hyp} \\
(\normal_U, X) &\geq e^{-o(c_1^2)}. \label{Miranda44 normal hyp}
\end{align}
Then
\begin{equation}\label{Miranda44 concl}
\Exc_{\alpha \rho} (U, P) \leq (\alpha^{d + 1} + c_0) \beta + C\rho^{d + 1}.
\end{equation}
\end{lemma}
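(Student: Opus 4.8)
The plan is to run the classical De Giorgi argument behind \cite[Lemma 6.4]{Giusti77}, controlling the curvature terms with the tools of Section \ref{Prelims}. Fix normal coordinates $(x^\mu)$ at $P$ with $X = \partial_0$, write $x' = (x^1, \dots, x^{d-1})$ and $D_t := \{x' \in \RR^{d-1} : |x'| < t\}$. Since $(\normal_U, X) \geq e^{-o(c_1^2)}$ forces $\partial^* U$ to be, near $P$, the graph $\{x^0 = u(x')\}$ of a $C^1$ function with $\|\nabla u\|_{C^0} = o(c_1)$ and $u(0) = 0$, the first task is to set up the dictionary between excess and Dirichlet energy. Unwinding the definition of $\Exc$, using (\ref{geodesic spheres area form}), the expansion $\langle \xi\rangle = 1 + \tfrac12 |\xi|^2 + O(|\xi|^4)$, and $K(P,Q) = I + O(t^2)$ from (\ref{path ordered exponential taylor series}), one gets for every $0 < t \leq \rho$
\[
\Exc_t(U,P) = \tfrac12 \int_{D_t} \bigl| \nabla u - (\nabla u)_{D_t} \bigr|^2 \dif x' + R(t),
\]
where $(\nabla u)_{D_t} := |D_t|^{-1} \int_{D_t} \nabla u \dif x'$ and $|R(t)| \lesssim \bigl( \|\nabla u\|_{C^0} + t^2 \bigr) \int_{D_t} | \nabla u - (\nabla u)_{D_t} |^2 \dif x' + t^{d+1}$; the key organizational point throughout is that one always pairs the mean-zero field $\nabla u - (\nabla u)_{D_t}$ against the relevant test object, so the non-small constant tilt $(\nabla u)_{D_t} = O(c_1)$ never appears undifferentiated.

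Next I would show that $u$ is an approximate solution of the Euclidean minimal surface equation (\ref{euclidean MSE}) on $D_\rho$. The near-minimality hypothesis (\ref{Miranda44 minimality hyp}) says the $g$-area of the graph exceeds its infimum over competitors with the same boundary trace by at most $c_1\beta$; comparing the $g$-area functional with the Euclidean one in normal coordinates produces principal-part coefficient errors of size $O(\rho^2)$, lower-order coefficients of size $O(\rho)$, and a forcing term equal to the mean curvature of the slices $\{x^0 = \mathrm{const}\}$, which vanishes at $P$ (since $\partial g = 0$ there) and so is $O(\rho)$ on $B(P,\rho)$; the nonlinearity of (\ref{euclidean MSE}) itself contributes terms of relative size $O(\|\nabla u\|_{C^0}^2)$. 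The crucial structural fact---and the reason the argument is forced to work in normal coordinates---is that the linearization at the trivial solution of the $g$-minimal surface operator is exactly the flat Laplacian $\Delta$, so none of these corrections disturbs the principal part. Choosing then a generic radius $\rho'' = \rho\bigl(1 - o_{c_1}(1)\bigr)$ and letting $h$ be the harmonic function on $D_{\rho''}$ with $h = u$ on $\partial D_{\rho''}$, I would test the approximate equation against $u - h \in H^1_0(D_{\rho''})$---once more isolating the constant $(\nabla u)_{D_{\rho''}}$ against $\int \nabla(u-h) = 0$---to obtain
\[
\int_{D_{\rho''}} |\nabla(u - h)|^2 \dif x' \leq o_{c_1}(1)\,\beta + C\rho^{d+1}.
\]

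The third step is the harmonic decay. Each component of $\nabla h$ is harmonic, so $(\nabla h)_{D_s} = \nabla h(0)$ by the mean value property, and the pairwise $L^2(D_s)$-orthogonality of spherical harmonics of distinct degrees gives $|D_s|^{-1} \int_{D_s} |\nabla h - \nabla h(0)|^2 \leq (s/\rho'')^2\, |D_{\rho''}|^{-1} \int_{D_{\rho''}} |\nabla h - \nabla h(0)|^2$ for $s \leq \rho''$; multiplying by the volume ratio, $\int_{D_{\alpha\rho}} |\nabla h - \nabla h(0)|^2 \leq (\alpha\rho/\rho'')^{d+1} \int_{D_{\rho''}} |\nabla h - \nabla h(0)|^2$. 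Since $h$ minimizes Dirichlet energy among competitors with its boundary trace and the average is the best constant approximant, $\int_{D_{\rho''}} |\nabla h - \nabla h(0)|^2 \leq \int_{D_{\rho''}} |\nabla u - (\nabla u)_{D_{\rho''}}|^2 \leq 2\Exc_{\rho''}(U,P) + o_{c_1}(1)\beta + C\rho^{d+1} \leq 2\beta + o_{c_1}(1)\beta + C\rho^{d+1}$, using the dictionary, (\ref{approximate monotone}), and (\ref{Miranda44 induction hyp}). Feeding the decay and the comparison estimate back into the dictionary at scale $\alpha\rho$ via $|a + b|^2 \leq (1+\varepsilon)|a|^2 + (1 + \varepsilon^{-1})|b|^2$ yields
\[
\Exc_{\alpha\rho}(U,P) \leq (1 + \varepsilon)\Bigl(\tfrac{\alpha\rho}{\rho''}\Bigr)^{d+1} \beta + o_{c_1,\varepsilon}(1)\,\beta + C\rho^{d+1}.
\]
One then fixes constants in order: $c_0$ small; $\alpha := 1/(2(1-c_0))$; $\varepsilon$ small so $(1+\varepsilon)\alpha^{d+1} \leq \alpha^{d+1} + c_0/3$; then $c_1 = c_1(c_0,\varepsilon)$ small so the $o_{c_1,\varepsilon}(1)$ errors are $\leq c_0/3$; then $\rho, \beta$ small (depending on $c_0$) so that $(\rho/\rho'')^{d+1} - 1$ costs at most another $c_0/3$ and every additive term is absorbed by $C\rho^{d+1}$. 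This gives (\ref{Miranda44 concl}).

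I expect the genuine obstacle to be the second step: verifying that the Riemannian minimal surface equation for a graph, written in normal coordinates at $P$, has the flat Laplacian as its linearization at the trivial solution, and that every curvature correction is either an $O(\rho^2)$ perturbation of the principal part or an $O(\rho)$ forcing---in particular that the coordinate slices through $P$ are minimal to first order there. The secondary difficulty is bureaucratic: one must confirm that every error term falls into exactly one of the two tolerated classes (multiplicative in $\beta$ and $o_{c_1}(1)$, or a clean $O(\rho^{d+1})$), which is precisely why the argument is organized around systematically splitting off the mean of $\nabla u$; the cross terms between the $O(\rho^2)$-sized metric errors and the $O(c_1)$-sized tilt $(\nabla u)_{D_t}$ are the ones that look dangerous until that splitting is carried out.
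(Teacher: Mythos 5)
Your overall strategy is the paper's: represent $\partial U$ as a graph with small gradient in normal coordinates (using (\ref{Miranda44 normal hyp})), identify the excess with a non-parametric oscillation functional up to $O(\rho^{d+1})$ errors coming from (\ref{path ordered exponential taylor series}) and the Taylor expansion of $g$, perform harmonic replacement using (\ref{Miranda44 minimality hyp}), and invoke harmonic decay with the same $c_0/3$-budget bookkeeping. The one genuine gap is your assumption that the graph passes through $P$, i.e.\ $u(0)=0$: nothing in the hypotheses puts $P$ on $\partial U$, and your ``dictionary'' equating $\Exc_t(U,P)$ (taken over metric balls centered at $P$, with parallel transport to $P$) with the mean oscillation of $\nabla u$ over coordinate disks $D_t$ centered at the coordinate origin silently uses this. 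The paper handles it by first disposing of the vacuous case $\partial U\cap B(P,\alpha\rho)=\emptyset$, then choosing $Q\in\partial U\cap B(P,\alpha\rho)$, setting up normal coordinates and the graph representation at $Q$, and transferring the excess back to basepoint $P$ via Proposition \ref{translation invariance}, which costs only $(\diam)^2\,|\partial^*U|=O(\rho^{d+1})$. Without that re-centering step your disks $D_t$ are centered at the wrong point and the claimed identity for $\Exc_t(U,P)$ does not follow.

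A second, smaller point: even after re-centering, the metric ball $B(P,\alpha\rho)$ does not project onto a coordinate disk of radius $\alpha\rho$; the projection of $\partial U\cap B(P,\alpha\rho)$ is only contained in a disk of radius $(\alpha+c_0/2)$ times that of the outer disk (this is the nesting statement (\ref{rep as a good graph set nests}), proved via the annulus argument in Lemma \ref{hypersurfaces are graphical}). Your final absorption of ``$(\rho/\rho'')^{d+1}-1$'' into $c_0/3$ is the right instinct, but the loss enters multiplicatively as $\alpha\mapsto\alpha+c_0/2$ in the decay exponent, not merely through the choice of $\rho''$, and this is exactly why the lemma is stated with $\alpha=1/(2(1-c_0))$ rather than $1/2$. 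Your route to the harmonic approximation (testing an approximate Euler--Lagrange equation against $u-h$) differs from the paper's purely variational comparison $|K|\geq\eta(N,I)$ followed by a citation of \cite[Lemma 6.2]{Giusti77}, but both are standard and either works once the two points above are repaired.
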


\begin{lemma}[reduction to the $C^1$ case]\label{single mollify}
There exists $c_2(c_1, M) > 0$ such that for any ball $B(P, \rho)$, with $\rho$ sufficiently small depending on $c_1$, and any set $U$ of least perimeter in $B(P, \rho)$ such that
\begin{equation}\label{single mollify hyp}
\Exc_\rho (U, P) \leq c_2 \rho^{d - 1},
\end{equation}
there exists an open set $V \subseteq B(P, (1 - c_1)\rho)$ and a $P$-aligned vector field $X$ such that $\partial V$ is $C^1$ and on $B(P, \rho)$, and such that for any $0 < \varpi \leq (1 - c_1)\rho$,
\begin{align}
|\Exc_\varpi (U, P) - \Exc_\varpi (V, P)| &\leq c_1 \Exc_\rho (U, P) + C\rho^{d + 1}, \label{single mollify excess} \\
|\partial V \cap B(P, (1 - c_1)\rho)| &\leq \eta(V, B(P, (1 - c_1)\rho)) + c_1 \Exc_\rho (U, P), \label{single mollify minimality} \\
(\normal_V, X) &\geq e^{-o(c_1^2)}. \label{single mollify normal}
\end{align}
\end{lemma}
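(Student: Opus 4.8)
The plan is to carry out the Riemannian version of Giusti's single mollification step \cite[Lemma 7.5]{Giusti77}. Throughout we work in normal coordinates $(x^\mu)$ based at $P$, in which --- after the volume normalization of Section~\ref{Prelims} --- $B(P,\rho)$ becomes a Euclidean ball carrying a metric of the form $\delta_{\mu\nu}+O(\rho^2)$, with Christoffel symbols $O(\rho)$ (cf.\ the proof of Proposition~\ref{translation invariance}) and parallel propagator $K(P,\cdot)=\id+O(\rho^2)$ by (\ref{path ordered exponential taylor series}). The guiding principle is that wherever the flat argument uses the Euclidean divergence theorem, convolution, or coarea formula, the curvature corrections have relative size $O(\rho^2)$; since all perimeters are $O(\rho^{d-1})$ by Corollary~\ref{doubling dimension}, these contribute only the advertised additive errors $O(\rho^{d+1})$, and the hypothesis $d\le7$ is never invoked.

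\emph{Step 1: flatness and choice of $X$.} Since $|K(P,Q)\normal_U(Q)|=1$ pointwise, a direct computation gives the identity $\int_{B(P,\rho)}\lvert K(P,Q)\normal_U(Q)-\xi\rvert^2\,d\mu(Q)=2\,\Exc_\rho(U,P)$, where $\xi$ is the unit covector in the direction of $\avg_{B(P,\rho),P,\mu}\normal_U$; since $K(P,\cdot)=\id+O(\rho^2)$, the analogous $L^2(\mu)$ bound, with error $O(\rho^{d+1})$, holds for $\normal_U$ itself in these coordinates. Rotating the normal frame, assume $\xi=\dif x^0$ and set $X:=\partial_0$, which is $P$-aligned; this step only fixes the reference direction, the pointwise bound (\ref{single mollify normal}) coming from the mollification below. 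Feeding this $L^2$-flatness into the density estimates of Corollary~\ref{doubling dimension} --- valid at every point of $\partial U$, not only of $\partial^*U$ --- a standard compactness argument using Lemma~\ref{Miranda convergence}, whose limits of rescaled configurations have zero excess, hence constant conormal, hence are half-spaces by Proposition~\ref{locality of Caccioppoli} (no dimensional hypothesis), shows that $1_U$ is $L^1(B(P,(1-\tfrac12 c_1)\rho))$-close to $1_{\{x^0<0\}}$, with closeness tending to $0$ as the threshold $c_2$ in (\ref{single mollify hyp}) tends to $0$.

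\emph{Step 2: mollification and choice of a level.} Fix a small ratio $\tau=\tau(c_1)$, put $\varepsilon:=\tau\rho$, and let $f:=1_U*\phi_\varepsilon$ be the coordinate mollification by a standard radial kernel, so $f\in C^\infty(B(P,(1-\tau)\rho))$ and $0\le f\le1$. Writing $\nabla f=-(\dif x^0)\,a-b$ with $a:=\mu*\phi_\varepsilon\ge0$ scalar and $b:=((\normal_U-\dif x^0)\mu)*\phi_\varepsilon$, Cauchy--Schwarz gives $\lvert b\rvert^2\le a\cdot\big((\lvert\normal_U-\dif x^0\rvert^2\mu)*\phi_\varepsilon\big)$ pointwise, while $a\gtrsim\varepsilon^{-1}$ on the $\varepsilon$-neighbourhood of $\partial U$ that supports $\nabla f$ (from the lower density bound of Corollary~\ref{doubling dimension}); hence $\lvert b\rvert\ll a$ there, so that: (i) $\lVert\dif f\rVert_{TV(B(P,s))}\le\lvert\partial^*U\cap B(P,s+\varepsilon)\rvert+O(\rho^{d+1})$; (ii) the level-set excess defect obeys $\lVert\dif f\rVert_{TV(B(P,s))}-\big\lvert\int_{B(P,s)}\dif f\big\rvert\lesssim\int_{B(P,s+\varepsilon)}\lvert\normal_U-\dif x^0\rvert^2\,d\mu+O(\rho^{d+1})$, which by Step 1 and the drift estimate of Proposition~\ref{Monotone} is $\lesssim\Exc_\rho(U,P)+O(\rho^{d+1})$, the extraction of the small constant $c_1$ being the refined part of Giusti's argument; and (iii) the unit normal of $\{f=t\}$, $t\in(\tfrac14,\tfrac34)$, lies within $o(c_1^2)$ of $\dif x^0$ at every point. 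Applying the coarea formula (Proposition~\ref{Coarea2}) to $f$, averaging over $t$, and discarding (by Chebyshev's inequality) the levels on which the $t$-averaged forms of (\ref{single mollify excess}) and (\ref{single mollify minimality}) fail, a positive-measure set of levels $t\in(\tfrac14,\tfrac34)$ satisfies the pointwise forms of (\ref{single mollify excess}) and (\ref{single mollify minimality}) as well as $\lvert\partial^*\{f>t\}\cap B(P,(1-c_1)\rho)\rvert\lesssim\rho^{d-1}$; by Sard's theorem a.e.\ such $t$ is a regular value, so $\partial\{f>t\}$ is smooth, and we take $V$ to be a suitable localization of $\{f>t\}$ inside $B(P,(1-c_1)\rho)$ for such a $t$. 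Property (\ref{single mollify normal}) is then (iii), and (\ref{single mollify excess}) follows from (i)--(ii) by unwinding the definition of $\Exc$ and using that $\int_B\dif f$ agrees with $\int_B K(P,\cdot)\normal_U\,d\mu$ up to an $\varepsilon$-boundary layer controlled by Proposition~\ref{Monotone} and up to $O(\rho^{d+1})$ from $K(P,\cdot)=\id+O(\rho^2)$.

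\textbf{The main obstacle} is (\ref{single mollify minimality}): transferring the \emph{global} least-gradient property of $1_U$ to the mollified level set $V$. The argument is by un-mollification --- a competitor $W$ for $V$ in $B(P,(1-c_1)\rho)$ is converted, at the cost of an $\varepsilon$-boundary layer, into a competitor for $1_U$ in a slightly larger ball, so that (\ref{least perimeter dfn}) together with the a priori estimates (\ref{a priori estimate 1})--(\ref{a priori estimate 2}) forces $\lvert\partial^*V\cap B(P,(1-c_1)\rho)\rvert\le\eta(V,B(P,(1-c_1)\rho))+c_1\Exc_\rho(U,P)+C\rho^{d+1}$ for the $t$-averaged level, after which one selects $t$ realizing this bound together with the properties from Step 2. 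The delicacy --- and here we follow \cite[Lemma 7.5]{Giusti77} closely --- is to combine the coarea formula in both directions with the monotonicity formula (Proposition~\ref{Monotone}) so as to keep the boundary-layer and density-deficit terms below $c_1\Exc_\rho(U,P)+C\rho^{d+1}$, and to pin down the parameter hierarchy: $c_1$ is chosen first, then $\tau=\tau(c_1)$, then $c_2=c_2(c_1)$, small enough that the flatness defect in Step 2(ii)--(iii) meets the threshold $o(c_1^2)$ while the excess and perimeter errors close simultaneously. Since (\ref{least perimeter dfn}), the coarea formula, and the monotonicity formula are all available on $M$ with the curvature corrections already isolated as $O(\rho^{d+1})$, the Riemannian case adds no new analytic difficulty beyond this bookkeeping.
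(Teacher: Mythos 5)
There is a genuine gap, and it sits exactly at the point you defer as ``bookkeeping'': your parameter hierarchy fixes the mollification scale as $\varepsilon=\tau(c_1)\rho$ with $\tau$ depending only on $c_1$, whereas the conclusions (\ref{single mollify excess}) and (\ref{single mollify minimality}) require errors of size $c_1\Exc_\rho(U,P)$, a quantity that may be far smaller than $c_2\rho^{d-1}$ (indeed, as small as $O(\rho^{d+1})$, or zero). Every boundary-layer error your construction produces --- the trace discrepancy $\|1_U-1_V\|_{L^1(\partial B(P,s))}$ and the perimeter discrepancy between $U$ and the level set $V=\{f>t\}$ --- lives in the $\varepsilon$-neighbourhood of $\partial U$, whose volume is $\gtrsim$ comparable to $\varepsilon\rho^{d-2}\cdot\rho$ up to constants; averaging over the radius $s$ in a range of length $\sim c_1\rho$ gives a typical error of order $\tau\rho^{d-1}/c_1$. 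This is not controlled by $c_1\Exc_\rho(U,P)+C\rho^{d+1}$ unless $\Exc_\rho(U,P)\gtrsim\tau\rho^{d-1}/c_1^2$, which is exactly what you cannot assume. The paper's proof ties the mollification scale to the excess itself: it sets $\gamma=\rho^{1-d}\Exc_\rho(U,P)$ (normalized via the compactness-and-contradiction setup, $\gamma_n\le n^{-2}$) and mollifies at scale $\varepsilon=\gamma^4\rho$, so that the layer errors are $O(\gamma^3)\cdot\Exc_\rho(U,P)\ll c_1\Exc_\rho(U,P)$; this is the analogue of Giusti's (7.22)--(7.23) and is indispensable.

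Once the scale is forced down to $\varepsilon=\gamma^4\rho$, your route to (\ref{single mollify normal}) collapses as well. The pointwise Cauchy--Schwarz bound $|b|^2\le a\cdot((|\normal_U-\dif x^0|^2\mu)*\phi_\varepsilon)$ together with $a\gtrsim\varepsilon^{-1}$ and the global excess bound gives $|b|/a\lesssim(\varepsilon^{1-d}\Exc_\rho(U,P))^{1/2}=(\gamma^{4(1-d)}\cdot\gamma)^{1/2}$, which \emph{diverges} as $\gamma\to0$ for $d\ge2$: the excess of scale $\rho$ says nothing directly about a single ball of radius $\gamma^4\rho$, where all of the excess could concentrate. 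This is precisely the difficulty that Lemma \ref{main mollifier lemma} and Lemma \ref{mollifier sublemma} are built to overcome, via the greedy cover at scale $\delta\varepsilon$, the monotonicity formula (Proposition \ref{Monotone}) to transport the excess bound from scale $\rho$ down to the intermediate scale $\sigma=\gamma^{1/(2d)}\rho$ and then to $\delta\varepsilon$, and the Stokes/hemisphere comparison yielding (\ref{K3 calculus 2}); your proposal contains no substitute for this machinery. In short: with your fixed $\tau$ the normal estimate is easy but (\ref{single mollify excess})--(\ref{single mollify minimality}) fail; with the excess-dependent scale those two hold but the normal estimate requires the full De Giorgi argument. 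The two regimes cannot be reconciled by adjusting $c_2$.
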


In these lemmata, (\ref{Miranda44 induction hyp}), should be viewed as a bootstrap hypothesis, which is then improved by (\ref{Miranda44 concl}), while (\ref{Miranda44 minimality hyp}), (\ref{single mollify minimality}) asserts that the set in question is close to a set of least perimeter.
The technical condition (\ref{Miranda44 normal hyp}), (\ref{single mollify normal}) is used to find coordinates in which $\partial U$ can be represented by a graph; unlike (\ref{Miranda44 induction hyp}) and (\ref{Miranda44 minimality hyp}), which are conditions on the average behavior of $\partial U$, (\ref{Miranda44 normal hyp}) is a pointwise condition.

\begin{proof}[Proof of the de Giorgi lemma, assuming Lemmata \ref{Miranda44} and \ref{single mollify}]
Let $c_0 \leq 2^{-(d + 2)}$ and $c_1 \leq c_0/4$ be as in Lemma \ref{Miranda44}, and let $c := c_2$ be as in Lemma \ref{single mollify}.
Let $U$ be a set of least perimeter satisfying (\ref{single mollify hyp}), and let $V, X$ be as in Lemma \ref{single mollify}.
By (\ref{approximate monotone}) and (\ref{single mollify excess}),
\begin{align*}
\Exc_{(1 - c_1) \rho} (V, P) &\leq \Exc_{(1 - c_1) \rho} (U, P) + c_1 \Exc_\rho (U, P) + O(\rho^{d + 1}) \\
&\leq (1 + c_1) \Exc_\rho (U, P) + O(\rho^{d + 1}).
\end{align*}
Since $1/2 = \alpha (1 - c_0)$, by Lemma \ref{Miranda44}, it follows that
\begin{align*}
    \Exc_{\rho/2} (V, P) &\leq (2^{-(d + 1)} + c_0) (1 + c_1) \Exc_\rho (U, P) + O(\rho^{d + 1}) \\
    &\leq (2^{-(d + 1)} + 2^{-(d + 2)}) \Exc_\rho (U, P) + O(\rho^{d + 1})
\end{align*}
Finally, by (\ref{approximate monotone}) and (\ref{single mollify excess}),
\begin{align*}
    \Exc_{\rho/2} (U, P)
    &\leq \Exc_{\rho/2} (V, P) + c_1 \Exc_\rho (U, P) + O(\rho^{d + 1}) \\
    &\leq (2^{-(d + 1)} + 2^{-(d + 2)} + 2^{-(d + 3)}) \Exc_\rho (U, P) + O(\rho^{d + 1})\\
    &\leq 2^{-d} \Exc_\rho (U, P) + O(\rho^{d + 1}). \qedhere
\end{align*}
\end{proof}

\section{The smooth case}\label{smoothcase}
\subsection{Minimal graphs}
We now prove Lemma \ref{Miranda44}, following \cite[Chapter 6]{Giusti77}.
To set up the proof, let $(x^0, \dots, x^{d - 1})$ be normal coordinates based at $P$ such that $X = \partial_0$, and let $\Omega \subseteq \{x^0 = 0\}$ be a relatively open set.
We say that a $C^1$ hypersurface $N \subset M$ is \dfn{graphical} over $\Omega$ if for every integral curve $\gamma$ of $X$ passing through $\Omega$, $N$ intersects exactly once, and transversely.\footnote{Elsewhere in the literature this condition is called \dfn{strongly graphical}, and \dfn{graphical} means that the intersection may fail to be transverse.}
In coordinates, this means that $N = \{x^0 = u(x_1, \dots, x_{d - 1})\}$ for some $C^1$ \dfn{graphical function} $u: \Omega \to \RR$.
We equip $\Omega$ with the flat metric arising from the coordinates $(x^1, \dots, x^{d - 1})$, and let $\dif x$ denote the area form on $\Omega$ arising from its flat geometry.
The condition of being graphical only depends on $\Omega, X$, but not on the choice of coordinates.

\begin{lemma}
There is a map $\Lagrange$ taking $1$-jets on $\Omega$ to area forms on $\Omega$, with the following property: for any graphical hypersurface $N$ over $\Omega$, with graphical function $u$, and any $\Omega' \subseteq \Omega$, the area of the subset $N'$ of $N$ which is graphical over $\Omega'$ is
$$|N'| = \int_{\Omega'} \Lagrange(u, \dif u).$$
Moreover, if $\|\dif u\|_{C^0} \leq 1$, then
$$\Lagrange(u, \dif u) = \sqrt{1 + |\dif u|^2 + O(|x|^2 + \|u\|_{C^0}^2)} \dif x.$$
\end{lemma}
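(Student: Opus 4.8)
The plan is to compute the area of a graphical hypersurface directly in the given normal coordinates, extract the leading-order Euclidean area functional, and control the error terms using the fact that we are working in normal coordinates based at $P$.

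First I would write the hypersurface $N' = \{x^0 = u(x^1,\dots,x^{d-1}) : (x^1,\dots,x^{d-1}) \in \Omega'\}$ as the image of the embedding $\iota : \Omega' \to M$, $\iota(x^1,\dots,x^{d-1}) = (u(x),x^1,\dots,x^{d-1})$. Then $|N'| = \int_{\Omega'} \sqrt{\det(\iota^* g)}\,\dif x$, and I would define $\Lagrange(u,\dif u)(x) := \sqrt{\det(\iota^* g)(x)}\,\dif x$. This is manifestly a function of the $1$-jet of $u$ at $x$ (and of the point $x$, through the metric coefficients $g_{\mu\nu}(u(x),x)$), and the additivity over subsets $\Omega' \subseteq \Omega$ is immediate from additivity of the integral, so the first assertion is essentially just unwinding the definition of the area of a graph. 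The pulled-back metric has components $(\iota^* g)_{ij} = g_{ij} + g_{i0}\partial_j u + g_{0j}\partial_i u + g_{00}\partial_i u\,\partial_j u$, where the $g_{\mu\nu}$ are evaluated at $(u(x),x)$.

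The substantive point is the Taylor expansion. Since $(x^0,\dots,x^{d-1})$ are normal coordinates based at $P$, we have $g_{\mu\nu}(0) = \delta_{\mu\nu}$ and $\partial_\lambda g_{\mu\nu}(0) = 0$ (the Christoffel symbols vanish at the origin in normal coordinates), so $g_{\mu\nu}(z) = \delta_{\mu\nu} + O(|z|^2)$ uniformly for $z$ in a fixed small coordinate ball. Evaluating at $z = (u(x),x)$ gives $g_{\mu\nu}(u(x),x) = \delta_{\mu\nu} + O(|x|^2 + \|u\|_{C^0}^2)$. Substituting into the formula for $\iota^* g$ and using $\|\dif u\|_{C^0} \leq 1$ to absorb cross terms, one gets $(\iota^*g)_{ij} = \delta_{ij} + \partial_i u\,\partial_j u + O(|x|^2 + \|u\|_{C^0}^2)$, where the error is uniform because $|\partial_i u| \leq 1$. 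By the standard linear-algebra identity $\det(I + v v^\top + R) = 1 + |v|^2 + O(\|R\|)$ for a rank-one perturbation $vv^\top$ with $|v| \leq \sqrt{d}$ and a small symmetric $R$, this yields $\det(\iota^* g) = 1 + |\dif u|^2 + O(|x|^2 + \|u\|_{C^0}^2)$, and taking the square root (which is smooth and Lipschitz near $1$) gives $\Lagrange(u,\dif u) = \sqrt{1 + |\dif u|^2 + O(|x|^2 + \|u\|_{C^0}^2)}\,\dif x$, as claimed.

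I expect the only mild subtlety — not really an obstacle — to be bookkeeping the uniformity of the $O(\cdot)$ terms: one must make sure the implied constants depend only on $M$ (via a bound on the curvature, equivalently on $|\partial^2 g|$ in a fixed normal coordinate chart around a compact region), and not on $u$ or $\Omega$, which is exactly why the hypothesis $\|\dif u\|_{C^0} \leq 1$ and the restriction to a small coordinate ball are imposed. Everything else is routine multivariable calculus, so the proof is short.
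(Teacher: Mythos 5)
Your proof is correct and follows essentially the same route as the paper's: pull back the metric via the graph parametrization $x \mapsto (u(x), x)$, Taylor-expand $g$ in normal coordinates to get $g_{\mu\nu}(u(x), x) = \delta_{\mu\nu} + O(|x|^2 + \|u\|_{C^0}^2)$, use $\|\dif u\|_{C^0} \leq 1$ to absorb cross terms, and apply the rank-one determinant identity $\det(I + \dif u \otimes \dif u) = 1 + |\dif u|^2$. The only cosmetic difference is that the paper packages the pulled-back metric via auxiliary bilinear forms $h(\cdot,\cdot)$ before writing it out in components; the content is identical.
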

\begin{proof}
Let $\Psi: \Omega \to N$ be the diffeomorphism $x \mapsto (x, u(x))$.
For any $v, w \in T_x \Omega$, let
\begin{align*}
h(v, w) &:= g(x, u(x))(v, w), \\
h(X, v) &:= g(x, u(x))(X(x, u(x)), v), \\
h(X, X) &:= g(x, u(x))(X(x, u(x)), X(x, u(x))).
\end{align*}
These quantities are defined in coordinates, because we can use the coordinates to identify $T_{(x, 0)}M$ with $T_{(x, u(x))} M$, so that the quadratic form $g(x, u(x))$ on $T_{(x, u(x))} M$ acts on $T_{(x, 0)}M$.
In these coordinates we have $\Psi_* v = v + (\partial_v u) X(x, u(x))$, so that
\begin{align*}
\Psi^* g(v, w)
&= g(\Psi_* v, \Psi_* w) \\
&= h(v, w) + h(X, v) \partial_w u + h(X, w) \partial_v u + h(X, X) \partial_v u \partial_w u.
\end{align*}
If we set $v = \partial_i$ and $w = \partial_j$, $i, j = 1, \dots, d - 1$, then from the fact that the coordinates are normal we obtain $h(v, w) = \delta_{ij} + O(|x|^2 + u(x)^2)$, $h(X, v) = O(|x|^2 + u(x)^2)$, and $h(X, X) = 1 + O(|x|^2 + u(x)^2)$.
Moreover, $\partial_i u \partial_j u$ are the components of $\dif u \otimes \dif u$.
In conclusion,
$$\Psi^* g(\partial_i, \partial_j) = I + \dif u \otimes \dif u + O((|x|^2 + u(x)^2)(1 + |\dif u(x)|)).$$
However, $1 + |\dif u(x)| \leq 2$, which can be absorbed.
Furthermore, by \cite[(24)]{Petersen2008}, the determinant of $I + \dif u \otimes \dif u$ is $1 + (|\dif u|')^2$, which up to an error of size $|x|^2 + u(x)^2$ is equal to $|\dif u|^2$, since we are in normal coordinates.
The pullback of the area form of $N$ by $\Psi$ is $\sqrt{\det((\Psi^* g(\partial_i, \partial_j))_{ij})}$, and $\Psi$ identifies $N'$ with $\Omega'$, so we get 
\begin{align*}
|N'| &= \int_{\Omega'} \sqrt{1 + |\dif u(x)|^2 + O(|x|^2 + \|u\|_{C^0}^2)} \dif x. \qedhere
\end{align*}
\end{proof}

Let us write $\mathscr B_\rho$ for a ball in $\Omega$, of radius $\rho$ centered on the coordinate origin $P$.
If the graphical hypersurface $N$ intersects $\Omega$ at $P$, then on $\mathscr B_\rho$, we have from the Taylor expansion of $g$ that
\begin{equation}\label{approximate by euclidean lagrangian}
\Lagrange(u, \dif u) - \Lagrange(u, \avg_\rho \dif u) = \left(\Japan{\dif u} - \Japan{\avg_\rho \dif u}\right) \dif x + O(\rho^2)
\end{equation}
where $\avg_\rho := \avg_{\mathscr B_\rho, P, \dif x}$, taken using the flat metric on $\Omega$.
Here $\Japan{\dif u} := \sqrt{1 + |\dif u|^2}$ is the Japanese norm of $\dif u$, so that $\Japan{\dif u} \dif x$ is exactly the Lagrangian density for the euclidean minimal surface equation.
Thus, we reduce the problem of estimating $\Lagrange(u, \dif u) - \Lagrange(u, \avg_\rho \dif u)$ to the euclidean case, and hence can show the following analogue of \cite[Lemma 6.3]{Giusti77}.

\begin{lemma}[de Giorgi lemma, minimal graphs]\label{Miranda43}
There exists $c_1 = c_1(c_0, M) > 0$ such that for every $\beta, \rho > 0$, the following holds.
Let $N$ be a $C^1$ graphical hypersurface over $\mathscr B_\rho$, with graphical function $u$, which intersects $\Omega$ at $P$.
Let $I$ be the union of all integral curves of $X$ through $\mathscr B_\rho$.
Suppose that $\|\dif u\|_{C^1} \leq c_1$, and that
\begin{align}
\int_{\mathscr B_\rho} \Lagrange(u, \dif u) - \Lagrange(u, \avg_\rho \dif u) &\leq \beta \label{Miranda43 oscillation}, \\
\int_{\mathscr B_\rho} \Lagrange(u, \dif u) &\leq \eta(N, I) + c_1 \beta \label{Miranda43 minimality}.
\end{align}
Then for any $\tilde \alpha = \frac{1}{2} + O(c_0)$,
\begin{equation}\label{Miranda43 concl}
\int_{\mathscr B_{\tilde \alpha \rho}} \Lagrange(u, \dif u) - \Lagrange(u, \avg_{\tilde \alpha \rho} \dif u) \leq \left(\tilde \alpha^{d + 1} + \frac{c_0}{2}\right) \beta + C\rho^{d + 1}.
\end{equation}
\end{lemma}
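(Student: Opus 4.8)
The plan is to reduce to the constant-coefficient (euclidean) de Giorgi estimate for the minimal surface equation by exploiting the approximation (\ref{approximate by euclidean lagrangian}), and then run de Giorgi's harmonic-comparison argument on the euclidean Lagrangian $\Japan{\dif u}\dif x$. First I would replace $\Lagrange(u,\dif u)$ throughout by the euclidean density $\Japan{\dif u}\dif x$ at the cost of errors of size $O(\rho^2)$ per unit area, hence $O(\rho^{d+1})$ after integrating over $\mathscr B_\rho$; by (\ref{approximate by euclidean lagrangian}) the quantity $\int_{\mathscr B_\rho}\Lagrange(u,\dif u)-\Lagrange(u,\avg_\rho\dif u)$ differs from $\int_{\mathscr B_\rho}\Japan{\dif u}-\Japan{\avg_\rho\dif u}$ by $O(\rho^{d+1})$, and similarly for $\tilde\alpha\rho$ in place of $\rho$. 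The hypothesis $\|\dif u\|_{C^1}\leq c_1$ guarantees that, after this reduction, $u$ satisfies the euclidean minimal surface equation (\ref{euclidean MSE}) up to an error controlled by $\|\dif u\|_{C^0}^2+\rho^2$, coming both from the fact that $\Lagrange$ is only an approximate minimal surface operator and from the $C\rho^{d+1}$ gap in the minimality hypothesis (\ref{Miranda43 minimality}): testing (\ref{Miranda43 minimality}) against competitors and using $\|\dif u\|_{C^0}$ small shows $u$ is within $c_1\beta+C\rho^{d+1}$ (in the appropriate integrated sense) of a genuine minimizer of $\int\Japan{\dif u}\dif x$ on $\mathscr B_\rho$.

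Next I would carry out the linearization step exactly as in \cite[Lemma 6.3]{Giusti77}. The Euler--Lagrange operator of $\Japan{\dif u}\dif x$ linearizes at the trivial solution to the Laplacian, so an (approximate) minimizer $u$ with small gradient is $C^1$-close to a harmonic function $h$ on $\mathscr B_\rho$ with the same Dirichlet data, with $\|\dif u-\dif h\|_{L^2(\mathscr B_\rho)}^2\lesssim \|\dif u\|_{C^0}^2\,\|\dif u\|_{L^2(\mathscr B_\rho)}^2 + (\text{minimality gap})$. For the harmonic function $h$ one has the clean estimate that for $\tilde\alpha=\tfrac12+O(c_0)$,
\begin{equation*}
\int_{\mathscr B_{\tilde\alpha\rho}}|\dif h - \avg_{\tilde\alpha\rho}\dif h|^2 \leq \tilde\alpha^{d+1}\int_{\mathscr B_\rho}|\dif h - \avg_\rho \dif h|^2,
\end{equation*}
which follows by expanding $\dif h$ in homogeneous harmonic polynomials and noting that the zeroth-order term is killed by subtracting the average, so each surviving term scales at least like $\tilde\alpha^{d+1}$ (indeed $\tilde\alpha^{d+2}$, but the $\tilde\alpha^{d+1}$ bound absorbs the $O(c_0)$ slack in $\tilde\alpha$). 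Since $\Japan{\xi}-\Japan{\avg\xi}$ agrees with $\tfrac12|\xi-\avg\xi|^2$ up to cubic errors in $\|\dif u\|_{C^0}$ — because $\Japan{\cdot}$ is smooth with $\Hess\Japan{0}=I$ and $\avg_\rho\dif u$ is itself $O(c_1)$ — the left side of (\ref{Miranda43 concl}) is comparable to $\tfrac12\int_{\mathscr B_{\tilde\alpha\rho}}|\dif u-\avg_{\tilde\alpha\rho}\dif u|^2$ and the right side of (\ref{Miranda43 oscillation}) to $\tfrac12\int_{\mathscr B_\rho}|\dif u-\avg_\rho\dif u|^2$, up to errors absorbed into the $\tfrac{c_0}{2}\beta$ and $C\rho^{d+1}$ terms. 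Chaining the three comparisons ($u\leadsto h$ on $\mathscr B_\rho$, the scaling for $h$, $h\leadsto u$ on $\mathscr B_{\tilde\alpha\rho}$) and choosing $c_1$ small enough that the error terms proportional to $\|\dif u\|_{C^0}^2\leq c_1^2$ times the excess are beaten by $\tfrac{c_0}{2}\beta$, yields (\ref{Miranda43 concl}).

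The main obstacle is bookkeeping the two genuinely different error sources and making sure each lands in the right bin. The $O(\rho^{d+1})$ errors — from the Taylor expansion of the metric in (\ref{approximate by euclidean lagrangian}), from geodesic-sphere distortions, and from the $C\rho^{d+1}$-slack inherent in passing between $\Lagrange$ and the euclidean operator — must be tracked separately from the multiplicative $O(c_1^2)\beta$ and $O(c_0)\beta$ errors, which come from the nonlinearity of $\Japan{\cdot}$ and from the caloric-vs-harmonic comparison; only the latter can be made small by shrinking $c_0, c_1$, whereas the former is irreducible and is exactly why (\ref{Miranda43 concl}) carries an additive $C\rho^{d+1}$. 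A secondary subtlety is that $\avg_\rho \dif u$ is taken with respect to the flat metric on $\Omega$ and with basepoint $P$, but since $u$ is defined on $\Omega\subseteq\{x^0=0\}$ with its euclidean structure the averaging operator here is genuinely the flat one (no parallel propagator is needed on $\Omega$ itself), so this is a notational rather than mathematical difficulty; the interaction with curvature has already been quarantined into the $\Lagrange$-vs-euclidean comparison and does not re-enter the linearization.
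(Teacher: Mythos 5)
Your proposal is correct and follows essentially the same route as the paper: use (\ref{approximate by euclidean lagrangian}) to trade $\Lagrange$ for the euclidean density $\Japan{\dif u}\dif x$ at a cost of $O(\rho^{d+1})$, compare $u$ with the harmonic function having the same boundary data via the minimality hypothesis (\ref{Miranda43 minimality}), and conclude by the classical harmonic-decay estimate. The only difference is one of packaging: the paper stops once the hypotheses of \cite[Lemma 6.2]{Giusti77} are verified and cites that lemma for the euclidean core, whereas you re-sketch its interior (quadratic expansion of $\Japan{\cdot}$ and the harmonic-polynomial scaling argument), which is the same mathematics.
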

\begin{proof}
Let $v$ be the harmonic function on $\mathscr B_\rho$ (with its euclidean metric) which equals $u$ on $\partial \mathscr B_\rho$.
By elliptic regularity, the maximum principle for harmonic functions, and (\ref{Miranda43 minimality}),
$$\|v\|_{C^1} \lesssim \|v\|_{C^0} \leq \|u\|_{C^0} \leq \rho \|\dif u\|_{C^1} \leq c_1.$$
In particular, $\Japan{\dif v} \lesssim 1$ and $v(x)^2 \lesssim \rho^2$, so by (\ref{approximate by euclidean lagrangian}) and the fact that $|\mathscr B_\rho| \sim \rho^{d - 1}$,
\begin{align*}
&\left|\int_{\mathscr B_\rho} \Lagrange(u, \dif u) - \Lagrange(v, \dif v) - \Japan{\dif u} \dif x + \Japan{\dif v} \dif x\right| \\
&\qquad \lesssim \int_{\mathscr B_\rho} (|x|^2 + u(x)^2) \Japan{\dif u} \dif x + (|x|^2 + v(x)^2) \Japan{\dif v} \dif x
\lesssim \rho^{d + 1}.
\end{align*}
Let $K$ be the graph of $v$, viewed as a submanifold of $M$.
Since $u$ and $v$ have the same trace, $|K| \geq \eta(K, I) = \eta(N, I)$, so
\begin{align*}
\int_{\mathscr B_\rho} \Lagrange(u, \dif u) - \Lagrange(v, \dif v) &\leq \int_{\mathscr B_\rho} \Lagrange(u, \dif u) - \eta(\Psi_w(\Omega), I) \leq c_1 \beta.
\end{align*}
We can then replace $\beta$ with $\beta + O(\rho^{d + 1})$ so that $u, v$ meet the hypotheses of \cite[Lemma 6.2]{Giusti77} which gives the result if $c_1$ is small enough.
\end{proof}

\subsection{Reduction to minimal graphs}
We have thus established the $C^1$ de Giorgi lemma, under the additional assumptions that $P \in N$, and that $N$ is graphical with a graphical function $u$ such that $\|\dif u\|_{C^0} \leq c_1$.
We shall deal with the assumption that $P \in N$ later, but first we remove the assumption that $N$ is graphical with small derivative.

\begin{lemma}\label{hypersurfaces are graphical}
Suppose that $\rho$ is small enough depending on $c_1$.
Let $U$ be a set with $C^1$ boundary in $B(P, \rho)$.
Suppose that $X$ is an aligned vector field for some normal coordinates $(x^\mu)$ based at $Q \in B(P, \rho)$, such that on $B(P, \rho)$,
\begin{align}
(\normal_U, X) &\geq e^{-o(c_1^2)}. \label{rep as a good graph hyp}
\end{align}
Then there exists a relatively open set $\Omega \subseteq \{x^0 = 0\}$ such that $\partial U$ is $X$-graphical over $\Omega$, and its graphical function $u$ satisfies $\|\dif u\|_{C^0} \leq c_1$.

Moreover, there exists a ball $\mathscr B \subseteq \Omega$ with the following property.
Let $\Omega^\alpha$ be the set of points in $\Omega$ which are initial data for $X$-integral curves which intersect $\partial U \cap B(P, \alpha \rho)$.
Then 
\begin{equation}\label{rep as a good graph set nests}
    \Omega^\alpha \subseteq \left(\alpha + \frac{c_0}{2}\right) \mathscr B \subset \mathscr B \subseteq \Omega.
\end{equation}
\end{lemma}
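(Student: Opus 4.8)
The plan is to exploit the pointwise hypothesis (\ref{rep as a good graph hyp}) to show first that $\partial U$ is locally a graph over the hyperplane $\{x^0 = 0\}$, and then to track how the relevant domains of graphicality nest as we shrink the radius. First I would observe that since $(\normal_U, X) \geq e^{-o(c_1^2)}$ holds pointwise on $\partial U \cap B(P,\rho)$, and since parallel transport does not enter here (we are comparing $\normal_U(R)$ with $X(R)$ at the same point $R$), the conormal never becomes orthogonal to $X$; hence every $X$-integral curve meets $\partial U$ transversally. To get that each integral curve meets $\partial U$ \emph{exactly once}, I would use that $\rho$ is small compared to $c_1$: in the normal coordinates $(x^\mu)$ based at $Q$, the integral curves of $X = \partial_0$ are (to leading order) the coordinate lines $x^1,\dots,x^{d-1}$ fixed, and the $C^1$ hypersurface $\partial U$, having conormal uniformly close to $\dif x^0$, is a Lipschitz graph $x^0 = u(x^1,\dots,x^{d-1})$ with $\|\dif u\|_{C^0}$ controlled by the defect $1 - (\normal_U, X) = o(c_1^2)$ together with the $O(\rho^2)$ curvature corrections to the coordinate frame; choosing $\rho$ small enough forces $\|\dif u\|_{C^0} \leq c_1$. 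This is the computation implicit in Proposition \ref{locality of Caccioppoli}(2)--(3): once $\normal_U$ is continuous and nearly constant, $\partial U$ is a $C^1$ graph. Single-valuedness then follows because a $C^1$ graph with small gradient over a connected piece of $\{x^0=0\}$ cannot be traversed twice by a short segment of an $X$-integral curve.

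Having produced $\Omega$ and $u$, the second half is the nesting statement (\ref{rep as a good graph set nests}). I would argue as follows. The set $\Omega^\alpha$ consists of base points of $X$-integral curves that hit $\partial U \cap B(P,\alpha\rho)$. Since the graphical function satisfies $\|\dif u\|_{C^0} \leq c_1 \ll 1$, a point $R = (x, u(x)) \in \partial U$ lies in $B(P, \alpha\rho)$ only if $x$ lies within distance roughly $\alpha\rho$ of the projection of $P$ — more precisely, the map $R \mapsto x$ distorts distances by a factor $1 + O(c_1) + O(\rho^2)$, so the projection of $\partial U \cap B(P,\alpha\rho)$ into $\{x^0=0\}$ is contained in a ball of radius $\alpha\rho(1 + O(c_1) + O(\rho^2))$ about the projection $P'$ of $P$. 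Letting $\mathscr B = \mathscr B_\rho$ be the coordinate ball of radius $\rho$ about $P'$ (or a slightly smaller fixed multiple to keep it inside $\Omega$), the bound $\alpha\rho(1 + O(c_1)) \leq (\alpha + c_0/2)\rho$ holds provided $c_1$ and $\rho$ are small relative to $c_0$, which gives $\Omega^\alpha \subseteq (\alpha + c_0/2)\mathscr B$. The inclusions $(\alpha + c_0/2)\mathscr B \subset \mathscr B \subseteq \Omega$ are then immediate since $\alpha + c_0/2 = 1/(2(1-c_0)) + c_0/2 < 1$ for $c_0$ small, and $\mathscr B \subseteq \Omega$ by construction (shrink $\mathscr B$ if necessary so that the full graph over $\mathscr B$ stays inside $B(P,\rho)$ where the hypothesis (\ref{rep as a good graph hyp}) is available).

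The main obstacle I anticipate is making the distortion estimates in the second half genuinely quantitative and basepoint-robust: the coordinates are centered at $Q$, not at $P$, and $P$ itself need only lie somewhere in $B(P,\rho)$ — wait, $P$ is of course the center of $B(P,\rho)$, but $Q$ is the center of the coordinate system, and these differ. So one must carefully separate three sources of error — the $O(c_1)$ tilt of the graph, the $O(\rho^2)$ deviation of $X$-integral curves from coordinate lines, and the $O(\rho^2)$ eccentricity of geodesic balls versus coordinate balls (cf. (\ref{geodesic spheres area form}) and the eccentricity remark in the proof of Proposition \ref{LebesgueDiff}) — and check that their sum is absorbed into the single slack parameter $c_0/2$. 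Everything else is a routine unwinding of the $C^1$-graph picture; the content is that the constants line up in the right order ($c_0$ fixed first, then $c_1 \leq c_0/4$, then $\rho$ small depending on both), exactly as in the hypotheses of Lemmata \ref{Miranda44} and \ref{single mollify}.
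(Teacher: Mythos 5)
Your argument has two genuine gaps, one in each half of the proof.

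\textbf{Single-valuedness.} Your step establishing that $\partial U$ is a \emph{single-valued} graph is circular: you assert that ``a $C^1$ graph with small gradient over a connected piece of $\{x^0=0\}$ cannot be traversed twice by a short segment of an $X$-integral curve,'' but that presupposes $\partial U$ is already a graph. The pointwise condition (\ref{rep as a good graph hyp}) only gives local graphicality via the implicit function theorem; the actual obstruction is that an integral curve of $X$ might exit $B(P,\rho)$, wander around the outside, and re-enter, meeting $\partial U$ a second time. The paper rules this out by first checking that $B(P,\rho)$ appears \emph{convex} in the coordinates $(x^\mu)$ --- because $(x^\mu)$ is $O(\rho)$-close in $C^2$ to normal coordinates at $P$, where the geodesic sphere appears as a euclidean sphere of principal curvature $\gtrsim \rho^{-1}$ --- and then invoking \cite[Theorem 4.8]{Giusti77}, whose hypotheses include such convexity. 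This convexity step cannot be dropped, and your proposal offers no substitute for it.

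\textbf{The nesting (\ref{rep as a good graph set nests}).} Your choice of $\mathscr B$ as a coordinate ball of radius $\rho$ (or ``a slightly smaller fixed multiple'') about the projection $x_\natural$ of $P$ cannot be made consistent with $\mathscr B \subseteq \Omega$. Write $y_\natural$ for the $x^0$-coordinate of $P$ and $y_*$ for the $x^0$-coordinate of a point of $\partial U \cap B(P,\alpha\rho)$; then $\Omega$ is (up to $O(c_1)$) a ball of radius $\sqrt{\rho^2 - (y_*-y_\natural)^2}$, and $|y_*-y_\natural|$ may be as large as $\approx \alpha\rho$. In that regime the radius of $\Omega$ is about $\rho\sqrt{1-\alpha^2}\approx 0.87\rho$ --- a substantial, not marginal, contraction. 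Meanwhile your crude containment $\Omega^\alpha \subseteq \mathscr B(x_\natural, \alpha\rho(1+O(c_1)))$ would force $\mathscr B$ to have radius at least $\alpha\rho/(\alpha+c_0/2)\approx(1-c_0)\rho$ in order that $\Omega^\alpha\subseteq(\alpha+c_0/2)\mathscr B$; for small $c_0$ this exceeds $0.87\rho$, so the two requirements on $\mathscr B$ are incompatible. The whole point of the lemma is that the $x^0$-offset shrinks $\Omega^\alpha$ and $\Omega$ \emph{together}: the paper proves the sharper inclusion $\Omega^\alpha \subseteq \mathscr B(x_\natural, \sqrt{(1+Cc_1)\alpha^2\rho^2 - (y_*-y_\natural)^2})$ and chooses $\mathscr B := \mathscr B(x_\natural, \sqrt{(1-Cc_1)\rho^2 - (y_*-y_\natural)^2})$. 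The decisive inequality
\[
\left(\alpha + \frac{c_0}{2}\right)\sqrt{(1-Cc_1)\rho^2 - (y_*-y_\natural)^2} \;\geq\; \sqrt{(1+Cc_1)\alpha^2\rho^2 - (y_*-y_\natural)^2}
\]
then holds uniformly in $(y_*-y_\natural)$ precisely because $\alpha + c_0/2 < 1$, so subtracting $(y_*-y_\natural)^2$ from both sides of the squared comparison only helps. This offset-tracking is the substantive content of the second half, and your proposal does not engage with it.
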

\begin{proof}
We first observe that if $\rho$ is small enough depending on $M$, then $B(P, \rho)$ appears convex in the coordinates $(x^\mu)$.
Indeed, $(x^\mu)$ is $O(\rho)$-close in the $C^2$ topology to a normal coordinate system $(y^\mu)$ based at $P$, so it suffices to show that $B(P, \rho)$ appears uniformly strictly convex as $\rho \to 0$ in the coordinates $(y^\mu)$.
However, in those coordinates $(y^\mu)$, $\partial B(P, \rho)$ appears to be a sphere of radius $\sim \rho$, so its principal curvatures appear to be $\gtrsim \rho^{-1}$, implying the uniformly strict convexity.

By \cite[Theorem 4.8]{Giusti77}, the convexity in coordinates of $B(P, \rho)$ and (\ref{rep as a good graph hyp}), there exists $\Omega \subseteq \{x^0 = 0\}$ and $u \in C^1(\Omega)$ such that $\partial U$ is the graph of $u$ and 
$$\|\dif u\|_{C^0} \leq \sup_{x_1, x_2 \in \Omega} \frac{|u(x_1) - u(x_2)|}{|x_1 - x_2|} \leq e^{o(c_1^2)}\sqrt{1 - e^{-o(c_1^2)}} \leq c_1.$$

It remains to construct $\mathscr B$ satisfying (\ref{rep as a good graph set nests}); we adapt the proof of \cite[(6.25)]{Giusti77} to this setting.
We may assume that $\Omega^\alpha$ is nonempty; otherwise just choose any ball $\mathscr B$ in the open set $\Omega$.
Let $y := x^0$ and $x := (x^1, \dots, x^{d - 1})$ denote the coordinate functions.
Since $\Omega^\alpha$ is nonempty, there exists $(x_*, y_*) \in \partial U \cap B(P, \alpha \rho)$, and then for $(x, y) \in \partial U$, we obtain from the bound on $\dif u$ that $|y - y_*| \lesssim c_1 \rho$.
Writing $(x_\natural, y_\natural)$ for the coordinates of $P$, we see from the Pythagorean theorem and the approximate euclideanness of the metric that for $(x, y) \in \partial B(P, \rho)$,
$$|x - x_\natural|^2 + (y - y_\natural)^2 = \rho^2 + O(\rho^3).$$
If we additionally impose $(x, y) \in \partial U$, then it follows that 
$$|x - x_\natural|^2 + (y_* - y_\natural)^2 = \rho^2 + O(c_1 \rho^2)$$
at least if $\rho$ is small depending on $c_1$.
But $(x, y) \in \partial U \cap \partial B(P, \rho)$ iff $x \in \partial \Omega$ and $y = u(x)$.
So $\partial \Omega$ lies in an annulus:
$$\partial \Omega \subset \{x: (1 - Cc_1)\rho^2 - (y_* - y_\natural)^2 \leq |x - x_\natural|^2 \leq (1 + Cc_1)\rho^2 - (y_* - y_\natural)^2\}$$
for some constant $C$ depending on $M$ only.
A similar argument with $(x, y) \in \partial U \cap \partial B(P, \alpha \rho)$ (instead of $\partial B(P, \rho)$) shows that
$$\partial \Omega^\alpha \subset \{x: (1 - Cc_1) \alpha^2 \rho^2 - (y_* - y_\natural)^2 \leq |x - x_\natural|^2 \leq (1 + Cc_1)\alpha^2 \rho^2 - (y_* - y_\natural)^2\}.$$
If we take $c_1$ small enough depending on $C, c_0$, then we may choose
$$\mathscr B := \mathscr B\left(x_\natural, \sqrt{(1 - Cc_1)\rho^2 - (y_* - y_\natural)^2}\right).$$
Then $\mathscr B \subseteq \Omega$ and since 
\begin{align*}
\left(\alpha + \frac{c_0}{2}\right) \sqrt{(1 - Cc_1)\rho^2 - (y_* - y_\natural)^2} &\geq \sqrt{(1 + Cc_1)\alpha^2 \rho^2 - (y_* - y_\natural)^2},
\end{align*}
we also have $(\alpha + c_0/2) \mathscr B \supseteq \Omega^\alpha$.
\end{proof}

\begin{proof}[Proof of Lemma \ref{Miranda44}]
Throughout this proof we assume that there exists some $Q \in \partial U \cap B(P, \alpha \rho)$.
If not, then (\ref{Miranda44 concl}) is vacuous since then $\Exc_{\alpha \rho} (U, P) = 0$.

Consider normal coordinates $(x^\mu)$ based at $Q$, such that at $Q$, $Y := \partial_0$ points in the same direction as $X$.
If $\rho$ is small enough depending on $c_1$, then by (\ref{Miranda44 normal hyp}), we have 
$$(\normal_U, Y) \geq e^{-o(c_1^2)}.$$
By Lemma \ref{hypersurfaces are graphical}, it follows that $\partial U$ is $Y$-graphical over some set $\Omega \subseteq \{x^0 = 0\}$, with graphical function $u$ satisfying $\|\dif u\|_{C^0} \leq c_1$.
Moreover, there exists a ball $\mathscr B \subseteq \Omega$ satisfying (\ref{rep as a good graph set nests}).
Let $J_t := \{(x, y) \in B(P, \rho): x \in t\mathscr B\}$ be the cylinder over $t\mathscr B$.

For $t \leq 1$, we can use (\ref{path ordered exponential taylor series}) and Corollary \ref{doubling dimension} to compute in the coordinates $(x^\mu)$ that
\begin{align*}
    \Exc_{J_t}(U, Q) &= |\partial U \cap J_t| - \int_{J_t} \mathcal Pe^{-\int_R^Q \Gamma} \normal_U(R) \dif \mu(R) \\
    &= |\partial U \cap J_t| - \left[\int_{J_t} (\normal_U)_\alpha \dif \mu\right] \dif x^\alpha + O(\rho^{d + 1}).
\end{align*}
Here $\Gamma$ are Christoffel symbols and $\mu$ is the surface measure.

Let $\nu$ denote the surface measure with respect to the euclidean metric on $M$ obtained from coordinates, and let $\normal_U'$ denote the conormal $1$-form with respect to the euclidean metric.
On \cite[pg83]{Giusti77} it is shown that
\begin{equation}\label{Excess versus Lagrangian}
\left|\left[\int_{J_t} (\normal_U')_\alpha \dif \nu\right] \dif x^\alpha\right| = \Japan{\avg_{t\rho} \dif u} |t\mathscr B|.
\end{equation}
By Corollary \ref{doubling dimension} and the approximate euclideanness of the metric, we can replace $\dif \nu$ with $\dif \mu$, and $\normal_U'$ with $\normal_U$, in (\ref{Excess versus Lagrangian}), at the price of an error of size $O(\rho^{d + 1})$.
So from (\ref{approximate by euclidean lagrangian}), we deduce 
\begin{align*}
\Exc_{J_t}(U, P)
&= \Exc_{J_t}(U, Q) + O(\rho^{d + 1}) \\
&= |\partial U \cap J_t| - \int_{t\mathscr B} \Japan{\avg_{t\rho} \dif u} \dif x + O(\rho^{d + 1}) \\
&= \int_{t\mathscr B} \Lagrange(u, \dif u) - \Lagrange(u, \avg_{t\rho} \dif u) + O(\rho^{d + 1}).
\end{align*}
Applying (\ref{rep as a good graph set nests}) and the properties (\ref{approximate monotone}) and (\ref{translation invariant excess}) of the excess, we obtain from Lemma \ref{Miranda43} with $\tilde \alpha := \alpha + c_0/2$ that
\begin{align*}
\Exc_{\alpha \rho}(U, P) 
&\leq \Exc_{J_{\alpha + c_0/2}}(U, P) + O(\rho^{d + 1}) \\
&\leq \int_{(\alpha + c_0/2) \mathscr B} \Lagrange(u, \dif u) - \Lagrange(u, \avg_{(\alpha + c_0)\rho} \dif u) + O(\rho^{d + 1}) \\
&\leq \left(\left(\alpha + \frac{c_0}{2}\right)^{d + 1} + \frac{c_0}{2}\right) \int_{\mathscr B} \Lagrange(u, \dif u) - \Lagrange(u, \avg_\rho \dif u) + O(\rho^{d + 1}).
\end{align*}
Since $\alpha < 1$, if $c_0$ is smaller than an absolute constant, $((\alpha + c_0/2)^{d + 1} + c_0/2) \leq \alpha^{d + 1} + c_0$.
We can then reverse the above computation to obtain 
\begin{align*}(\alpha^{d + 1} + c_0) \int_{\mathscr B} \Lagrange(u, \dif u) - \Lagrange(u, \avg_\rho \dif u) &\leq (\alpha^{d + 1} + c_0) \Exc_\rho(U, P) + O(\rho^{d + 1}). \qedhere 
\end{align*}
\end{proof}

\section{Mollification} \label{Mollifiers}
In this section we prove Lemma \ref{single mollify} by following \cite[Chapter 7]{Giusti77}.
The main point is to estimate a certain convolution operator which mollifies the set of least perimeter $U$ in such a way that, if $\normal_U$ is close to a given vector field $X$ on average, then its mollification is close to $X$ in $C^0$.
More precisely, given normal coordinates $(x^\mu)$ based at some point $P$, and a distribution $f$, we define 
\begin{equation}\label{convolution operator}
f_\varepsilon(x) := \frac{d + 1}{|\Ball^d| \varepsilon^d} \int_{\mathcal B(x, \varepsilon)} f(y) \left(1 - \frac{|x - y|}{\varepsilon}\right) \dif y.
\end{equation}
Here and always in this section, $\mathcal B(x, \varepsilon)$ is the \emph{euclidean} ball taken in the coordinates $(x^\mu)$.

It is possible to define an intrinsic operator, by replacing $|x - y|$ with $\dist(x, y)$, $\mathcal B$ with $B$, and $\dif y$ with $\dif V(y)$.
However, such an operator turns out to be rather annoying to work with, because it does not commute with $\dif$, so we shall not take this approach here.\footnote{Miranda \cite{Miranda66} uses a different convolution operator, which has an invariant interpretation as the pseudodifferential operator $(1 - \varepsilon^{-2} \Delta)^{-1}$, but sadly seems tricky to estimate in the presence of curvature.}

\subsection{Pointwise estimate on the normal vector}
When applied to sets of least perimeter with small excess, the convolution operator (\ref{convolution operator}) satisfies an analogue of \cite[Theorem 7.3]{Giusti77}, which in the flat case asserted that the normal vector of a convolved set of least perimeter is pointwise close to an aligned vector field; we now prove the same result.

\begin{lemma}\label{main mollifier lemma}
Let $\rho, \gamma > 0$ be small enough depending on $M, P$, and let $U$ be a set of least perimeter such that
\begin{equation}\label{hypothesis on main mollifier lemma}
\Exc_\rho(U, P) \leq \gamma \rho^{d - 1}.
\end{equation}
Furthermore, suppose that $\rho^2 \leq \gamma$.
Let $\varepsilon := \gamma^4\rho$, $\sigma := \gamma^{1/(2d)} \rho$, and $\varphi := (1_U)_\varepsilon$, taken in normal coordinates $(x^\mu)$ based at $P$.
Then there exists an aligned vector field $X$ such that for every $y \in (O(\gamma^2), 1 - O(\gamma^2))$, the level set $N_y := \partial \{\varphi > y\} \cap B(P, \rho - 2\sigma)$ is a $C^1$ hypersurface, and on $N_y$,
\begin{equation}\label{claim on main mollifier lemma}
    (\normal_{N_y}, X) \geq e^{-O(\rho^2 + \sigma)}.
\end{equation}
\end{lemma}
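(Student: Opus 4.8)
The plan is to follow the structure of \cite[Theorem 7.3]{Giusti77}, tracking the curvature corrections that appear because we work in normal coordinates based at $P$ and because the convolution operator $f \mapsto f_\varepsilon$ is not the intrinsic one. First I would record the basic facts about the mollifier $(\ref{convolution operator})$: it is a convolution with a fixed nonnegative compactly supported kernel of total mass $1$, so $(1_U)_\varepsilon$ takes values in $[0,1]$, is smooth, and $\dif (1_U)_\varepsilon = (\dif 1_U)_\varepsilon$ (this is the reason the linear operator $(\ref{convolution operator})$ was chosen over the intrinsic one). Hence for $x \in B(P, \rho - \varepsilon)$ we have the vector-valued identity
\begin{equation*}
\dif \varphi(x) = \frac{d+1}{|\Ball^d|\varepsilon^d} \int_{\mathcal B(x,\varepsilon)} \left(1 - \frac{|x-y|}{\varepsilon}\right) \normal_U(y) \dif\mu(y) + (\text{lower order}),
\end{equation*}
where $\mu = \star|\dif 1_U|$ is the surface measure and the lower-order term comes from differentiating the kernel and is controlled by the perimeter bound of Corollary \ref{doubling dimension}. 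The upshot is that $\dif\varphi(x)$ is, up to the harmless factors $\Japan{\cdot}$ and an $O(\rho^2)$ metric error, a weighted average of $\normal_U$ over $\mathcal B(x,\varepsilon)$, exactly as in the flat case.

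Next I would choose the aligned vector field $X$. Using the Lebesgue differentiation theorem (Corollary following Proposition \ref{LebesgueDiff}) together with the translation-invariance of the excess $(\ref{translation invariant excess})$ and Proposition \ref{translation invariance}, the hypothesis $(\ref{hypothesis on main mollifier lemma})$ forces $\avg_{B(P,\rho),P,\mu}\normal_U$ to have length $\geq 1 - \gamma/\mu(B(P,\rho)) \cdot \rho^{d-1} \gtrsim 1 - C\gamma$; normalizing this average and picking normal coordinates at $P$ whose $\partial_0$ direction agrees with it produces a $P$-aligned $X$ with $|\avg_{B(P,\rho),P,\mu}\normal_U - X| \lesssim \sqrt{\gamma}$. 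The point of the parameters $\varepsilon = \gamma^4\rho$ and $\sigma = \gamma^{1/(2d)}\rho$ is that they are chosen so that the perimeter in $\mathcal B(x,\varepsilon)$, for $x$ within distance $\rho - 2\sigma$ of $P$, is still within a factor $1 + o(1)$ of $\varepsilon^{d-1}$ (by Corollary \ref{doubling dimension} applied at scale $\varepsilon$ around a nearby point of $\partial^*U$, or a vacuous bound if $\partial^*U$ misses the ball), while the total excess $\Exc_{B(x,\varepsilon')}(U,P) \lesssim \gamma\rho^{d-1}$ is tiny compared to $\varepsilon^{d-1} = \gamma^{4(d-1)}\rho^{d-1}$ only after one uses the much better excess decay coming from the monotonicity formula — concretely, Proposition \ref{Monotone} applied between scales $\varepsilon$ and $\rho$ shows $\Exc_\varepsilon(U,P') \lesssim (\varepsilon/\rho)^{?}\Exc_\rho(U,P) + $ error, so that at scale $\varepsilon$ the normalized excess is bounded by a small power of $\gamma$. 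Feeding this into the averaged representation of $\dif\varphi(x)$ above, one gets $|\dif\varphi(x)/|\dif\varphi(x)| - X| \leq e^{O(\rho^2+\sigma)} - 1$ pointwise on $B(P,\rho - 2\sigma)$, wherever $\dif\varphi(x) \neq 0$.

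It then remains to pass from this pointwise bound on the direction of $\dif\varphi$ to the conclusion about the level sets $N_y = \partial\{\varphi > y\}\cap B(P,\rho-2\sigma)$. For $y$ in the stated range $(O(\gamma^2), 1 - O(\gamma^2))$, one shows $\dif\varphi \neq 0$ on $N_y$: this is where the excess must be used to rule out critical points, via the coarea formula (Proposition \ref{Coarea2}) and a Fubini argument as in \cite[Chapter 7]{Giusti77} — most level sets are $C^1$ by Sard, and the bound on $|\dif\varphi - \Japan{\cdot}X\cdot(\text{scalar})|$ forces $\partial_0\varphi \gtrsim |\dif\varphi| > 0$, so in fact \emph{every} such level set is a graph over $\{x^0 = 0\}$, hence $C^1$ and nonempty-intersection transverse. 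Since $\normal_{N_y} = \dif\varphi/|\dif\varphi|$ on $N_y$, the direction bound is precisely $(\ref{claim on main mollifier lemma})$. I expect the main obstacle to be bookkeeping the chain of inequalities among $\gamma, \varepsilon, \sigma, \rho$ so that the monotonicity-formula excess decay genuinely beats the scale loss $\varepsilon^{d-1}$ — i.e. verifying that the exponents in $\varepsilon = \gamma^4\rho$ and $\sigma = \gamma^{1/(2d)}\rho$ are compatible with both Corollary \ref{doubling dimension} and Proposition \ref{Monotone} — together with the curvature errors in the identity $\dif\varphi = (\dif 1_U)_\varepsilon$, which require expanding the metric in the normal coordinates at $P$ and absorbing the $O(\rho^2)$ terms into the exponent $e^{O(\rho^2+\sigma)}$.
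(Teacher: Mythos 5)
Your choice of $X$ (normalizing the Lebesgue-point average $\avg_{B(P,\rho),P,\mu}\normal_U$) and your basic observations about the convolution operator --- that it is linear, preserves $[0,1]$-valuedness, and satisfies $\dif\varphi = (\dif 1_U)_\varepsilon$ --- match the paper. You also correctly identify the central difficulty: the trivial bound $\Exc_{\varepsilon}(U,\cdot) \lesssim \gamma\rho^{d-1}$ is far too weak, since $\gamma\rho^{d-1} \gg \varepsilon^{d-1} = \gamma^{4(d-1)}\rho^{d-1}$ for $d \geq 2$.

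But the step you invoke to close this gap is wrong. You claim that Proposition \ref{Monotone} applied between scales $\varepsilon$ and $\rho$ gives a power-law excess decay of the form $\Exc_\varepsilon(U,P') \lesssim (\varepsilon/\rho)^{?}\Exc_\rho(U,P)$. It does not, and if it did the de Giorgi lemma would be trivial --- such a decay estimate \emph{is} essentially the de Giorgi lemma, so this step is circular. What the monotonicity formula actually controls is (i) the monotonicity of the density $r^{1-d}\mu(B(P,r))$ and (ii) the oscillation of the rescaled vector-valued flux $|r_2^{1-d}I(u,P,r_2) - r_1^{1-d}I(u,P,r_1)|$, and the bound on (ii) is not in terms of the excess at the outer scale but in terms of the ``gap'' $\int_{r_1}^{r_2}\partial_r[e^{Ar^2}r^{1-d}\mu(B(P,r))]\dif r$. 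There is no a priori reason for that gap to be small just because the excess is small, so an additional argument is required. This is where the heart of the lemma lives and your proposal omits it entirely.

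Concretely, the paper's proof introduces an \emph{intermediate} scale $\sigma = \gamma^{1/(2d)}\rho$ at which the normalized excess $\sigma^{1-d}\Exc_\sigma$ genuinely is small (a power of $\gamma$), builds a greedy cover of $\partial^*U \cap B(P,\varepsilon(1-\delta))$ by balls of radius $2\delta\varepsilon$ with bounded overlap, and then, via Lemma \ref{mollifier sublemma}, transfers the bound from scale $\sigma$ down to scale $2\delta\varepsilon$ on each cover element. That transfer uses the vector-valued monotonicity estimate, but only after a further decomposition in which the ``gap'' term ($J_3^2$) is controlled by a Stokes-theorem flux comparison: one writes the vertical flux through a geodesic hemisphere $\Gamma_+$ in terms of the closed $(d-1)$-form $\psi = \dif\tilde x^1 \wedge\cdots\wedge\dif\tilde x^{d-1}$ and compares against the flux through the equatorial disk and through the small ball, using Corollary \ref{doubling dimension} to close the estimate. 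Without this flux comparison, the monotonicity formula alone does not let you beat the scale loss, and the pointwise bound on $\dif\varphi/|\dif\varphi|$ that your proposal asserts does not follow. You would need to supply the covering argument, the intermediate scale, and the $K_3 - K_4$ Stokes estimate before the conclusion is justified.
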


We begin the proof of Lemma \ref{main mollifier lemma} by letting $\delta := \gamma^d$ and $u := 1_U$.
We greedily construct a cover $\mathcal V = \{V_n: 1 \leq n \leq N\}$ of $\partial^* U \cap B(P, \varepsilon(1 - \delta))$, where $V_n := B(Q_n, 2\delta\varepsilon)$.
Since it was constructed greedily, $\mathcal V$ is \dfn{efficient} in the sense that there exists $C > 0$ only depending on $M$, such that any $Q \in B(P, \varepsilon(1 - \delta))$ is contained in $\leq C$ elements of $\mathcal V$.
Let $V_0$ be the annulus $B(P, \varepsilon) \setminus B(P, \varepsilon(1 - 2\delta))$.

To construct the aligned vector field $X$, let $v$ be the unit vector pointing in the same direction as $(\avg_{B(P, \rho), P, \mu} \normal_U)^\sharp$ where $\mu$ is the surface measure on $\partial^* U$; let $X$ be the aligned vector field extending $v$.
It then follows from (\ref{path ordered exponential taylor series}), Corollary \ref{doubling dimension}, and the fact that $\rho^2 \leq \gamma$ that 
\begin{equation}\label{hypothesis on mollifier sublemma}
\int_{B(P, \rho)} (|\dif u| - Xu)(z) \dif z = \Exc_\rho(U, P) + O(\rho^{d + 1}) \lesssim \gamma \rho^{d - 1}.
\end{equation}
We use this fact to estimate the convolution of $u$ in each ball $Q_n$.
However, $X$ is technically inconvenient as $|\dif u| - Xu$ can be negative. 
So we introduce $\tilde X := X/|X|$.

\begin{lemma}\label{mollifier sublemma}
Let $\tilde X := X/|X|$.
For every $Q \in \partial^* U \cap B(P, \varepsilon)$ and $x \in B(P, \rho - 2\sigma)$,
$$(1_{B(Q, 2\delta\varepsilon)}(|\dif u| - \tilde Xu))_\varepsilon(x) \lesssim \gamma^{1/(2d)} (1_{B(Q, \delta\varepsilon)} |\dif u|)_\varepsilon(x).$$
\end{lemma}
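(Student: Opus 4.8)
The plan is to compare the two convolutions by using the minimality of $U$ to transfer mass between the inner ball $B(Q,\delta\varepsilon)$ and its concentric dilate. First, observe that, since $\varepsilon = \gamma^4\rho$ and $\sigma = \gamma^{1/(2d)}\rho$, for $x\in B(P,\rho - 2\sigma)$ and $Q\in B(P,\varepsilon)$ the point $x$ is at distance $\gtrsim \sigma - \varepsilon \gtrsim \gamma^{1/(2d)}\rho$ from $Q$, hence from $B(Q,2\delta\varepsilon)$; since the kernel $1 - |x-y|/\varepsilon$ in (\ref{convolution operator}) is supported in $\mathcal B(x,\varepsilon)$, both sides of the asserted inequality vanish unless $\dist(x,Q)\lesssim\varepsilon$, so we may assume $x$ lies within $O(\varepsilon)$ of $Q$; in particular the weight $(1-|x-y|/\varepsilon)$ evaluated at $y\in B(Q,\delta\varepsilon)$ or $y\in B(Q,2\delta\varepsilon)$ is comparable to a constant, up to $O(\delta)$. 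Thus, up to harmless constants, it suffices to prove
$$\int_{B(Q,2\delta\varepsilon)} (|\dif u| - \tilde X u) \lesssim \gamma^{1/(2d)} \int_{B(Q,\delta\varepsilon)} |\dif u|.$$

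Second, I would bound the left-hand side using the monotonicity formula and the hypothesis (\ref{hypothesis on mollifier sublemma}). The quantity $\int_{B(Q,r)}(|\dif u| - \tilde X u)$ is, up to the $O(r^{d+1})$ error coming from (\ref{path ordered exponential taylor series}) and the difference between $\tilde X$ and a parallel field, exactly the excess $\Exc_r(U,Q)$, which by (\ref{translation invariant excess}) is $\Exc_r(U,P) + O(r^2 |\partial^* U \cap B(Q,r)|)$, and by (\ref{approximate monotone}) and (\ref{hypothesis on mollifier sublemma}) is $\lesssim \gamma\rho^{d-1}$ on the scale $r = \rho$. To push this down to the scale $2\delta\varepsilon$, I would invoke the consequence of the monotonicity formula (\ref{weak monotonicity}): the normalized excess is essentially monotone, so $\Exc_{2\delta\varepsilon}(U,Q) \lesssim (2\delta\varepsilon/\rho)^{d-1}\Exc_\rho(U,Q) + O((\delta\varepsilon)^{d+1}) \lesssim \gamma(\delta\varepsilon)^{d-1}$ after absorbing the error using $\rho^2 \leq \gamma$; here $\delta = \gamma^d$ and $\varepsilon = \gamma^4\rho$ are built precisely so these powers work out.

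Third, I would bound the right-hand side from below using Corollary \ref{doubling dimension}: since $Q\in\partial^* U$, we have $\int_{B(Q,\delta\varepsilon)}|\dif u| = |\partial^* U \cap B(Q,\delta\varepsilon)| \gtrsim (\delta\varepsilon)^{d-1}$. Combining, the ratio of the two sides is $\lesssim \gamma(\delta\varepsilon)^{d-1} / (\delta\varepsilon)^{d-1} = \gamma \leq \gamma^{1/(2d)}$ for $\gamma$ small, which is more than enough. The main obstacle I anticipate is bookkeeping the various error terms and confirming that the exponents in the definitions $\delta = \gamma^d$, $\varepsilon = \gamma^4\rho$, $\sigma = \gamma^{1/(2d)}\rho$ are consistent — in particular that the error $O((\delta\varepsilon)^{d+1})$ in the monotonicity step and the $O(\rho^{d+1})$ error in identifying the integral with the excess are genuinely lower-order after the rescaling, which is where the hypothesis $\rho^2\leq\gamma$ is used. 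A secondary subtlety is justifying the replacement of $Q$-based excess by $P$-based excess via (\ref{translation invariant excess}) on the small ball $B(Q,2\delta\varepsilon)$, where the diameter-squared factor must be checked against the perimeter bound from Corollary \ref{doubling dimension}.
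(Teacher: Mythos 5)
The central step in your proposal has a genuine gap. You assert that you can push the excess bound down from scale $\rho$ to scale $2\delta\varepsilon$ by ``the consequence of the monotonicity formula (\ref{weak monotonicity}): the normalized excess is essentially monotone, so $\Exc_{2\delta\varepsilon}(U,Q)\lesssim(2\delta\varepsilon/\rho)^{d-1}\Exc_\rho(U,Q)$.'' But (\ref{weak monotonicity}) says that the normalized \emph{perimeter} $e^{Ar^2}r^{1-d}\mu(B_r)$ is monotone, not the normalized excess. The excess is $\mu(B_r)-|I(u,\cdot,r)|$, and the only control the monotonicity formula gives on the second term is the inequality in Proposition \ref{Monotone} bounding $|r_2^{1-d}I_{r_2}-r_1^{1-d}I_{r_1}|$ by the \emph{difference} of normalized perimeters times a logarithmic factor. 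That is a much weaker statement; it does not yield geometric decay of the excess. Indeed, if the excess decayed geometrically under shrinking as a soft consequence of monotonicity, the de Giorgi lemma itself would be immediate --- your step is essentially assuming the theorem you are trying to prove. The consequence of this error is visible in the exponent: you conclude the ratio is $\lesssim\gamma\leq\gamma^{1/(2d)}$, which is far stronger than what the lemma states, and the paper only obtains $\gamma^{1/(2d)}$ precisely because the losses incurred in bridging scales are substantial.

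What the proof actually does is introduce the intermediate scale $\sigma=\gamma^{1/(2d)}\rho$ (absent from your outline), and proceed in two stages. First, the integrand $|\dif u|-\tilde Xu$ is nonnegative, so the hypothesis (\ref{hypothesis on mollifier sublemma}) at scale $\rho$ bounds the integral over the sub-ball $W=B(Q,\sigma)$ by $\gamma\rho^{d-1}$; normalizing by $\sigma^{1-d}$ costs a factor $(\rho/\sigma)^{d-1}=\gamma^{-(d-1)/(2d)}$, leaving $\gamma^{(d+1)/(2d)}$, which is the $I_1$ bound. Second, to descend from $\sigma$ to $2\delta\varepsilon$ one invokes the \emph{second} part of Proposition \ref{Monotone}; this requires estimating the difference $J_\sigma-J_{2\delta\varepsilon}$ of normalized perimeters, which is not small a priori and forces a further decomposition (the $K_i$) including a separate Stokes' theorem argument with the hemisphere $\Gamma_+$ to show the positive part of the flux through $\partial W$ is at most $|\Ball^{d-1}|+O(\sigma^2)$, matching the perimeter lower bound from Corollary \ref{doubling dimension}. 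Your proposal omits all of this, which is the real work of the lemma.

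A minor secondary point: in your first reduction you treat the kernel weight $1-|x-y|/\varepsilon$ as ``comparable to a constant up to $O(\delta)$'' on the balls $B(Q,\delta\varepsilon)\subset B(Q,2\delta\varepsilon)$. This is the right spirit, but the weight itself can be of order $\delta$ near $\partial\mathcal B(x,\varepsilon)$, so oscillation $O(\delta)$ does not by itself give a bounded ratio; the argument requires the specific comparison $\sup_{y\in B(Q,2\delta\varepsilon)}\chi_\varepsilon(x-y)\lesssim\inf_{y\in B(Q,\delta\varepsilon)}\chi_\varepsilon(x-y)$, which the paper cites from Giusti. This issue is repairable, but the monotonicity step above is not.
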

\begin{proof}
Let $\chi_\varepsilon$ be the convolution kernel.
Throughout this proof we may assume that $\gamma < 1/10$, so that $\sigma > \varepsilon$ and $\delta < 1/100$.
Then every point considered in this proof will be contained in the ball $B(P, 2\sigma)$, in which the metric takes the form $g_{\mu \nu} = \delta_{\mu \nu} + O(\sigma^2)$.
In particular, the volume form is $e^{O(\sigma^2)} \dif x$.

Let $V := B(Q, 2\delta\varepsilon)$; then we want to estimate 
$$(1_V(|\dif u| - \tilde Xu))_\varepsilon(x) = \int_{\mathcal B(x, \varepsilon) \cap V} \chi_\varepsilon(x - y) (|\dif u| - \tilde Xu)(y) \dif y.$$
Since $|\dif u| - \tilde Xu \geq 0$,
\begin{equation}\label{begin breaking up the mollifier}
\int_{\mathcal B(x, \varepsilon) \cap V} \chi_\varepsilon(x - y) (|\dif u| - \tilde Xu)(y) \dif y \leq \sup_{y \in V} \chi_\varepsilon(x - y) \int_V (|\dif u| - \tilde Xu)(z) \dif z.
\end{equation}
Moreover, by \cite[91]{Giusti77},
$$\sup_{y \in V} \chi_\varepsilon(x - y) \lesssim \inf_{y \in B(Q, \delta\varepsilon)} \chi_\varepsilon(x - y).$$

Let $W := B(Q, \sigma)$; by (\ref{weak monotonicity}), we obtain 
$$(2\delta\varepsilon)^{1 - d} \int_V |\dif u|(z) \dif z \leq e^{O(\sigma^2)} \sigma^{1 - d} \int_W |\dif u|(z) \dif z.$$
We use this to bound
\begin{align*}
(2\delta\varepsilon)^{1 - d} \int_V (|\dif u| - \tilde X u)(z) \dif z
&\leq \sigma^{1 - d} \int_W (|\dif u| - X u)(z) \dif z + O(\sigma^{3 - d}) \int_W |\dif u|(z) \dif z \\
&\qquad + \sigma^{1 - d} \int_W X u(z) \dif z - (2\delta\varepsilon)^{1 - d} \int_V Xu(z) \dif z \\
&\qquad + (2\delta\varepsilon)^{1 - d} \int_V (\tilde X - X)u(z) \dif z\\
&=: I_1 + I_2 + I_3 - I_4 + I_5.
\end{align*}
We then use (\ref{hypothesis on mollifier sublemma}) to bound
$$I_1 \leq \sigma^{1 - d} \int_{B(P, \rho)} (|\dif u| - X u)(z) \dif z \lesssim \gamma^{\frac{1 - d}{2d} + 1} \leq \gamma^{1/(2d)}.$$
By Corollary \ref{doubling dimension} we have $I_2 \lesssim \gamma^{1/(d - 1)}$, and since $X - \tilde X = O(\varepsilon^2)$ on $V \subseteq B(P, O(\varepsilon))$, $I_5 \lesssim \varepsilon^2 \lesssim \gamma^8$.

To estimate $I_3 - I_4$, we recall the notation (\ref{integral of du}) for vector-valued integrals.
For any $r > 0$, we can use (\ref{path ordered exponential taylor series}), the Taylor expansion of the metric, and Corollary \ref{doubling dimension} to estimate
$$r^{1 - d} \int_{B(Q, r)} Xu(z) \dif z = r^{1 - d} \left[\int_{B(Q, r)} K(Q, z) \dif u(z) \dif z\right]_0 + O(r^2) = I(u, Q, r)_0 + O(r^2).$$
Therefore we can apply the monotonicity formula, Proposition \ref{Monotone}, to compute
\begin{align*}
    I_3 - I_4 & \leq |\sigma^{1 - d} I(u, Q, \sigma) - (2 \delta \varepsilon)^{1 - d} I(u, Q, 2 \delta \varepsilon)| + O(\sigma^2) \\
    &\lesssim \sqrt{1 + (d - 1) \log \frac{\sigma}{2\delta\varepsilon}} \sqrt{\sigma^{1 - d} \int_W \star |\dif u|} \sqrt{\int_{2\delta\varepsilon}^\sigma \partial_r \left[e^{O(r^2)} r^{1 - d} \int_{B(Q, r)} \star |\dif u|\right] \dif r}\\
&\qquad + \sigma^{3 - d} \int_W \star |\dif u| + \sigma^2 \\
&=: J_1 J_2 J_3 + J_4 + J_5.
\end{align*}
Then (using Corollary \ref{doubling dimension} as necessary), we bound $J_1 \lesssim -\log \gamma$, $J_2 \lesssim 1$, and $J_4 \lesssim J_5 = \gamma^{1/(2d)}$.

In order to estimate $J_3$, we introduce a normal coordinate system $(\tilde x^\mu)$ based at $Q$, with $Y = \tilde \partial_0$ chosen so $Y(Q)$ and $X(Q)$ have the same span.
Then $Y(Q) = X(Q)/|X(Q)| = X(Q) + O(\varepsilon^2)$.
Taylor expanding $Y - X$ around $Q$, we obtain $\|Y - X\|_{C^0(W)} \lesssim \sigma$.
Then
\begin{align*}
J_3^2 &\leq \sigma^{1 - d} \int_W |\dif u|(z) \dif z - (2 \delta \varepsilon)^{1 - d} \int_V |\dif u|(z) \dif z + O(\sigma^{3 - d}) \int_W \star |\dif u| \\
&= \sigma^{1 - d} \int_W (|\dif u| - Xu)(z) \dif z + \sigma^{1 - d} \int_W (X - Y)u(z) \dif z + \sigma^{1 - d} \int_W Yu(z) \dif z \\
  &\qquad - (2 \delta\varepsilon)^{1 - d} \int_V Y u(z) \dif z + O(\sigma^{3 - d}) \int_W \star |\dif u| \\
&=: K_1 + K_2 + K_3 - K_4 + K_5.
\end{align*}
Then $K_1 = I_1 \leq \gamma^{1/2}$, $K_2 \lesssim \sigma = \gamma^{1/(2d)}$, and $K_5 = J_4 \lesssim \gamma^{1/(2d)}$.

To estimate $K_3 - K_4$, introduce the closed $d-1$-form $\psi := \dif \tilde x^1 \wedge \cdots \wedge \dif \tilde x^{d - 1}$.
Then
\begin{equation}\label{K3 calculus}
K_3 = \sigma^{1 - d} \int_W \dif u \wedge \psi = \sigma^{1 - d} \int_{U \cap \partial W} \psi.
\end{equation}
Since $W$ is a ball centered on the $(\tilde x^\mu)$-coordinate origin $P$, we can decompose
$$\partial W = \Gamma_+ \cup \Gamma_0 \cup \Gamma_-$$
where $\pm \tilde x^0 > 0$ on the hemispheres $\Gamma_\pm$ and $\Gamma_0$ is the equator.
Then all positive contributions to the integral in the right-hand side of (\ref{K3 calculus}) come from $\Gamma_+$.
However, if we set $W_0 := W \cap \{\tilde x^0 = 0\}$, then $\Gamma_+$ and $W_0$ are homologous relative to their common boundary $\Gamma_0$.
By Stokes' theorem,
$$K_3 \leq \sigma^{1 - d} \int_{\Gamma_+ \cap U} \psi \leq \sigma^{1 - d} \int_{\Gamma_+} \psi = \sigma^{1 - d} \int_{W_0} \psi.$$
Since $\psi$ is the euclidean volume form on $W_0$, and $W_0$ is a $d-1$-ball whose euclidean radius is $\leq \sigma + O(\sigma^3)$, it follows that
\begin{equation}\label{K3 calculus 2}
K_3 \leq |\Ball^{d - 1}| + O(\sigma^2).
\end{equation}
By Corollary \ref{doubling dimension}, the right-hand side of (\ref{K3 calculus 2}) is $\leq K_4 + O(\gamma^{1/d})$.

Adding up all the $K_i$, we finally conclude that $J_3 \lesssim \gamma^{1/d}$ and hence $I_1 + I_2 + I_3 - I_4 \lesssim \gamma^{1/(2d)}$.
Plugging this back into (\ref{begin breaking up the mollifier}),
$$(1_V(|\dif u| - X u))_\varepsilon(x) \lesssim (\delta\varepsilon)^{d - 1} \gamma^{1/(2d)} \inf_{y \in B(Q, \delta\varepsilon)} \chi_\varepsilon(x - y).$$
We finally apply Corollary \ref{doubling dimension} to prove
\begin{align*}
(\delta\varepsilon)^{d - 1} \inf_{y \in B(Q, \delta\varepsilon)} \chi_\varepsilon(x - y)
&\lesssim \int_{B(Q, \delta \varepsilon)} \chi_\varepsilon(x - y) |\dif u|(y) \dif y = |\dif u|_\varepsilon(x). \qedhere
\end{align*}
\end{proof}

\begin{proof}[Proof of Lemma \ref{main mollifier lemma}]
We begin by replacing $X$ with $\tilde X$.
Since $X = \tilde X + O(\varepsilon^2)$ on $B(P, 2\varepsilon)$, we obtain
$$((X - \tilde X)u)_\varepsilon(x) = \int_{\mathcal B(x, \varepsilon)} (X - \tilde X)u(y) \chi_\varepsilon(x - y) \dif y \lesssim \gamma^8 \int_{\mathcal B(x, \varepsilon)} \chi_\varepsilon(x - y) |\dif u|(y) \dif y.$$
In particular,  
$$(|\dif u| - Xu)_\varepsilon(x) = (|\dif u| - \tilde Xu)_\varepsilon(x) + \gamma^8 |\dif u|_\varepsilon(x).$$
By construction, $\supp \dif u \subseteq \bigcup_n V_n$, so
\begin{equation}\label{sum over cover}
(|\dif u| - \tilde Xu)_\varepsilon \leq \sum_{n = 0}^N (1_{V_n} (|\dif u| - \tilde Xu))_\varepsilon.
\end{equation}

It remains to estimate $|\dif u| - \tilde Xu$ on $V_0$.
As on \cite[92]{Giusti77}, $\chi_\varepsilon(x - y) \lesssim \varepsilon^{-d} \delta$ for $y \in V_0$; moreover, $V_0 \subseteq B(P, \varepsilon)$, so by Corollary \ref{doubling dimension},
$$\int_{V_0} (|\dif u| - \tilde Xu)(y) \chi_\varepsilon(x - y) \dif y \lesssim \varepsilon^{-d} \delta \int_{B(x, \varepsilon)} |\dif u|(y) \dif y \lesssim \frac{\delta}{\varepsilon}.$$
If $u(x) \in (O(\gamma^2), 1 - O(\gamma^2))$, then by \cite[Lemma 7.1]{Giusti77}, there exists $z \in \partial^* U \cap B(x, (1 - \gamma)\varepsilon)$.
In particular, $B(z, \gamma\varepsilon/2) \subseteq B(x, (1 - \gamma/2)\varepsilon)$ so on $B(z, \gamma\varepsilon/2)$, $\chi_\varepsilon(x - y) \gtrsim \varepsilon^{-d} \gamma$.
So 
$$\frac{\delta}{\varepsilon} \lesssim \gamma \inf_{y \in B(x, \varepsilon)} \chi_\varepsilon(x - y) \int_{B(x, \varepsilon)} |\dif u(y)| \dif y \leq \gamma |\dif u|_\varepsilon(x).$$
Plugging this estimate and Lemma \ref{mollifier sublemma} into (\ref{sum over cover}), we can apply the efficiency of $\mathcal V$ to see that on
$$\Omega := B(P, \rho - 2\sigma) \cap \{O(\gamma^2) < \varphi < 1 - O(\gamma^2)\}$$
we have
$$(|\dif u| - Xu)_\varepsilon \lesssim \gamma^{1/(2d)} |\dif u|_\varepsilon$$
which implies (\ref{claim on main mollifier lemma}).
In particular, near $\Omega$, one has $|\dif \varphi| > 0$.
Therefore $\varphi$ is a $C^1$ submersion by \cite[Lemma 7.1]{Giusti77}, which completes the proof.
\end{proof}

\subsection{Application to the de Giorgi lemma}
We now can complete the proof of the de Giorgi lemma.
Aside from the fact that we use compactness-and-contradiction to obtain the result for a set of least perimeter, rather than a sequence of sets of least perimeter as in \cite{Giusti77, Miranda66}, the proof is almost identical to \cite[Lemma 7.5]{Giusti77}, so we just sketch it.

\begin{proof}[Proof of Lemma \ref{single mollify}]
Suppose not.
Then there exist balls $B_n := B(P_n, \rho_n)$ and sets $U_n$ of least perimeter in $B_n$ such that
\begin{equation}\label{single mollify Excess assumption}
\gamma_n := \rho_n^{1 - d} \Exc_{\rho_n} (U_n, P_n) \leq n^{-2},
\end{equation}
and $\rho_n \leq 1/n$, but such that for every open set $V_n \subseteq B(P_n, (1 - \varepsilon) \rho_n)$ with $C^1$ boundary, and every $P_n$-aligned vector field $X_n$, at least one of (\ref{single mollify normal}), (\ref{single mollify minimality}), or (\ref{single mollify excess}) fail.

Now let $w_n := (u_n)_{\gamma_n^4 \rho_n}$, where the convolution was taken with respect to normal coordinates based at $P_n$.
Draw $t \in [1 - c_1, 1]$ uniformly at random.
Applying Lemma \ref{main mollifier lemma} with $\gamma := n^{-2}$ and $\rho := \rho_n$ (so $\gamma \geq \rho^2$), we find $a_n = O(\gamma_n^2)$, $b_n = 1 - O(\gamma_n^2)$, and a $P_n$-aligned vector field $X_n$ such that for $y \in (a_n, b_n)$, the level sets $\{w_n = y\}$ have normal vector $C^0$-close to $X_n$.
In particular, by the coarea formula,
$$\int_{tB_n} \star |\dif w_n| = \int_0^1 |\partial^* \{w_n > y\} \cap tB_n| \dif y \geq \int_{a_n}^{b_n} |\partial^* \{w_n > y\} \cap tB_n| \dif y,$$
so by the mean value theorem, there exists $y_n \in (a_n, b_n)$ such that
$$|\partial^* \{w_n > y_n\} \cap tB_n| \leq \frac{1}{b_n - a_n} \int_{tB_n} \star |\dif w_n|.$$
We then set $V_n := \{w_n > y_n\}$, $v_n := 1_{V_n}$, so that $V_n$ has $C^1$ boundary in $tB_n$ and
\begin{equation}\label{MVT mollifier}
|V_n \cap tB_n| \leq \frac{1}{b_n - a_n} \int_{tB_n} \star |\dif w_n|.
\end{equation}
By (\ref{claim on main mollifier lemma}), $(\normal_{V_n}, X_n) \geq 1 - O(n^{-1/d})$, so for $n$ large enough depending on $c_1$, $V_n$ satisfies (\ref{single mollify normal}).

Let $\Gamma_n := \partial(tB_n)$ and draw $s \in [0, t]$ at random.
Arguing completely analogously to the proofs of \cite[(7.23), (7.22)]{Giusti77}, respectively, we see that almost surely,
\begin{align}
\|u_n - v_n\|_{L^1(\Gamma_n)} &\ll \gamma_n \label{trace of vn} \\
|\partial V_n \cap sB_n| &\leq |\partial^* U_n \cap sB_n| + \gamma_n. \label{difference of surface area}
\end{align}
The conjunction of (\ref{trace of vn}), (\ref{difference of surface area}), and (\ref{a priori estimate 1}) implies
\begin{equation}
||\partial^* U_n \cap tB_n| - |\partial V_n \cap tB_n|| \ll \gamma_n, \label{mollifier quant2}
\end{equation}
and the conjunction of (\ref{mollifier quant2}), (\ref{a priori estimate 1}), the fact that $U_n$ has least perimeter, (\ref{single mollify Excess assumption}) and (\ref{trace of vn}) implies (\ref{single mollify minimality}) if $n$ is large enough depending on $c_1$.

To derive a contradiction, we must show that $V_n$ satisfies (\ref{single mollify excess}) for $n$ large enough depending on $c_1$.
Let $\varpi \leq t\rho$, and estimate
\begin{align*}
    |\Exc_\varpi(U_n, P_n) - \Exc_\varpi(V_n, P_n)|
    &\leq ||\partial^* U_n \cap t B_n| - |\partial V_n \cap t B_n||\\
    &\qquad + \left|\left[\int_{B(P_n, \varpi)} \star \partial_\mu(u_n - v_n) \right] \dif x^\mu(P_n)\right| + O(\rho_n^{d + 1}) \\
    &=: I_1 + I_2 + I_3.
\end{align*}
By (\ref{mollifier quant2}), $I_1 \leq c_1 \Exc_\varpi(U_n, P_n)/3$ if $n$ is large, and $I_3$ is irrelevant.
By Stokes' theorem and (\ref{trace of vn}), if $n$ is large then
\begin{align*}
    I_2 &\leq \left|\left[\int_{\partial B(P, \varpi)} (\normal_{B_\varpi})_\mu (u_n - v_n) \dif S_{\partial B(P, \varpi)}\right] \dif x^\mu(P_n)\right| \leq \frac{c_1}{3} \Exc_\varpi(U_n, P_n) + O(\rho_n^{d + 1}).
\end{align*}
This implies (\ref{single mollify excess}) for $n$ large and so contradicts our assumptions.
\end{proof}

\printbibliography

\end{document}